\documentclass[11pt,letterpaper]{article}

\usepackage[left=1in,right=1in,top=1in,bottom=1in]{geometry}
\usepackage{amsmath,amssymb,amsfonts,amsthm}
\usepackage{mathtools}
\usepackage{thmtools}
\usepackage{graphicx}
\usepackage{enumitem}
\usepackage[colorinlistoftodos]{todonotes}

\usepackage{hyperref}
\usepackage[capitalise]{cleveref}
\usepackage{bookmark} 
\usepackage{url}
\usepackage{color}
\usepackage{verbatim}
\usepackage{comment}

\newtheorem{theorem}{Theorem}
\newtheorem{lemma}[theorem]{Lemma}
\newtheorem{corollary}[theorem]{Corollary}
\newtheorem{proposition}[theorem]{Proposition}
\newtheorem{conjecture}[theorem]{Conjecture}

\theoremstyle{definition}
\newtheorem{definition}[theorem]{Definition}

\newtheorem{question}[theorem]{Question}

\newtheorem{fact}[theorem]{Fact}

\newtheorem{claim}[theorem]{Claim}

\newcommand{\R}{\mathbb{R}}
\newcommand{\Z}{\mathbb{Z}}

\newcommand{\Sp}{\mathbb{S}}
\newcommand{\F}{\mathbb{F}}

\newcommand{\RP}{\mathbb{R}\mathrm{P}}
\DeclareMathOperator{\GHD}{GHD}
\DeclareMathOperator{\VR}{VR}

\DeclareMathOperator{\conv}{conv}

\newcommand{\Nrv}{\mathrm{Nrv}}
\newcommand{\poly}{\mathrm{poly}}

\DeclareMathOperator{\srank}{\mathrm{rank}^\pm}
\DeclareMathOperator{\VC}{\mathrm{VC}}

\DeclareMathOperator{\sign}{sign}

\newcommand{\ind}{\operatorname{ind}_{\mathbb{Z}_2}}
\newcommand{\coind}{\operatorname{coind}_{\mathbb{Z}_2}}

\newcommand{\eps}{\varepsilon}

\newcommand{\sd}{\mathrm{sd}}

\begin{document}

\title{A $\mathbb{Z}_2$–Topological Framework for Sign-rank Lower Bounds}
\author{
Florian Frick\thanks{Department of Mathematical Sciences, Carnegie Mellon University. Email: \texttt{frick@cmu.edu}.}
\and
Kaave Hosseini\thanks{Department of Computer Science, University of Rochester. Email: \texttt{kaave.hosseini@rochester.edu}.}
\and 
Aliaksei Vasileuski\thanks{Department of Mathematical Sciences, Carnegie Mellon University. Email: \texttt{avasileu@andrew.cmu.edu}.}
}

\maketitle
\begin{abstract}

We develop a topological framework for proving lower bounds on sign-rank via $\mathbb{Z}_2$–equivariant topology, and use it to resolve the sign-rank of the Gap Hamming Distance problem up to lower-order terms.

For every (partial) sign matrix $A$, we associate a free $\mathbb{Z}_2$–simplicial complex $S(A)$ and show that  sign-rank of $A$ is characterized by the linear analog of $\mathbb{Z}_2$-index of $S(A)$. As a consequence, the classical $\mathbb{Z}_2$–index of $S(A)$ lower bounds the sign-rank of $A$, which reduces sign-rank lower bounds to topological obstructions.
This reduction allows us to use various tools from $\mathbb{Z}_2$–equivariant topology, particularly in regimes where classical lower-bound techniques break down. 

As the main application, we consider the  Gap Hamming Distance function
$\mathrm{GHD}_k^n$ (defined for $k < n/2$), which distinguishes pairs of strings in $\{0,1\}^n$
with Hamming distance at most $k$ from pairs with distance at least $n-k$. 
We prove an essentially tight lower bound and show that for any $k$,
\[ 
\text{sign-rank}(\mathrm{GHD}_k^n) = (1-o_k(1))\,2k. 
\]
 where the $o_k(1)$ term is $O\left(\sqrt{\frac{\log k}{k}}\right)$.
This improves on the previous lower bound of 
Hatami, Hosseini, and Meng (STOC 2023) who proved that sign-rank of 
$\mathrm{GHD}_k^n$ is at least $\Omega(k/\log(n/k))$.

A key technical ingredient is a new analysis of the $\mathbb{Z}_2$-coindex (which lower bounds $\mathbb{Z}_2$-index) of the Vietoris--Rips complex of the hypercube in the sparse regime which yields an essentially tight lower bound. Previously, no results were known in the sparse regime.

\end{abstract}
\newpage
\tableofcontents

\newpage

\section{Introduction}
The sign-rank of a partial matrix~$A$, denoted $\srank(A)$, is the minimum rank of any real matrix~$B$ such that $\sign(B_{ij}) = A_{ij}$ whenever $A_{ij}\in \{-1,+1\}$.
 Sign-rank is a fundamental complexity measure in theoretical computer science and mathematics, yet
despite decades of study, it remains notoriously difficult to analyze. 
There are   three known general lower bound tools for sign-rank, which can be summarized as \emph{VC dimension} \cite{paturi1986probabilistic}, variants of Forster's method based on \emph{spectral norm}/\emph{average margin} \cite{forster2002linear,forster2001relations,forster2006smallest, razborov2010sign,sherstov2011unbounded,linial2007complexity,hatami2022lower}, and \emph{rectangle based} methods \cite{alon2005crossing,hatami2022lower}.
    However, these lower bounds are  effective only when the given matrix contains a large \emph{pseudorandom} piece (i.e., has large VC dimension, or spectral norm is small, or it does not contain large monochromatic rectangles) and there are $N$ by $N$ matrices of sign-rank $\poly(N)$ where these methods cannot produce any $\Omega(1)$ lower bound \cite{hatami2022lower}.
The reader is referred to  \cite{hatami2022lower} for an in-depth discussion of limitations of these methods. 
    This lack of flexible tools to  lower bound sign-rank without requiring pseudorandomness has stalled progress on many fundamental problems about sign-rank. 

 A benchmark problem to study sign-rank is the Gap Hamming Distance problem, which is a partial function   $\GHD_{k}^n\colon \{0,1\}^n\times\{0,1\}^n \to \{-1,1,*\}$   defined for a parameter $k< n/2$ by:
\[
\GHD_{k}^n(x,y)=
\begin{cases}
1 & \text{if } d_H(x,y)\leq k,\\
-1 & \text{if } d_H(x,y)\geq n-k,\\
* & \text{otherwise}.
\end{cases}
\]
Here $d_H(x,y)$ is the Hamming distance between $x$ and $y$. 
Gap Hamming Distance is a fundamental object and  has been intensively studied in different parameter regimes, 
used as a lower bound primitive in randomized communication complexity and streaming algorithms \cite{indyk2003tight,brody2010better, chakrabarti2011optimal,sherstov2012communication,jayram2013optimal}, separating the power of communication models~\cite{papakonstantinou2014overlays,chattopadhyay2019equality,gavinsky2025unambiguous,blondal2025borsuk}, separating partial and total learnability of half-spaces with margin \cite{alon2022theory,chornomaz2025spherical, blondal2025borsuk,chase2024local}, and lower bounds for dimensionality reduction\cite{hatami2023borsuk}. Further more, Gap Hamming Distance  is  the XOR lift of the \emph{approximate majority} function with a broad range of applications in circuit complexity and query complexity, and it is also the discrete analog of the concept class of \emph{half-spaces with margin} which is a canonical object in learning theory.

Next we briefly discuss the sign-rank of $\GHD_k^n$ in different parameter regimes. 
 In the case of odd $n$ and $k=(n-1)/2$, $\GHD_k^n$ is the XOR lift of the exact Majority function, and has VC dimension $n$ which implies $\srank(\GHD_k^n)=n$. 
More generally, the function $\GHD_k^n$ has been studied in two parameter regimes:

\noindent
(1) $k\approx n/2-\sqrt{n}$: in this regime one can get $\Omega(n)$ sign-rank lower bounds using VC dimension or Forster's method as $\GHD_k^n$ still exhibits strong mixing behavior.

\noindent
(2) $k = \eps n$ for constant $\eps\in (0,0.49)$: 
in this regime, it is known that the VC dimension and average margin   are bounded and hence unable to prove super-constant lower bounds on its sign-rank.  However, using  isoperimetric inequalities of hypercube \cite{frankl1981short, frankl1987forbidden}, it is easy to see that $\GHD_k^n$ is still  mildly pseudorandom in the sense that it does not contain large monochromatic rectangles, which implies $\Omega(n)$ sign-rank lower bounds \cite{hatami2023personal}.  
However, once $k$ is  $O(\sqrt{n})$, this flicker of pseudorandom behavior entirely dies down and none of these methods can produce any super-constant lower bounds.

Recently, Hatami, Hosseini, and Meng~\cite{hatami2023borsuk} introduced a  new lower bound on the sign-rank of $\GHD_k^n$ by using a topological argument based on the \emph{Borsuk--Ulam theorem} \footnote{The Borsuk-Ulam theorem~\cite{borsuk1933drei}  states that  
for every continuous map on the $d$-dimensional unit sphere
\(
f \colon \Sp^d \to \mathbb{R}^d
\)
there exists a point $x \in \Sp^d$ such that
\(
f(x) = f(-x).
\)}
and showed  that 
\begin{equation}\label{eq:HHM}
\srank(\GHD_{k }^n) = \Omega\left(\frac{k}{\log(\frac{n}{k})}\right)    
\end{equation}
which is nontrivial for $k = \Omega(\log n)$. 
The idea of \cite{hatami2023borsuk} was to first consider the  analog of $\GHD_{k}^n$ defined on $\Sp^{n-1} \times \Sp^{n-1}$ which is amenable to Borsuk-Ulam theorem,
give a tight lower bound on its sign-rank \footnote{The class of half-spaces with margin $\eps$ is defined as $\mathbb{G}_\eps^n\colon\mathbb{S}^{n-1}\times\mathbb{S}^{n-1}\to\{-1,1,*\}$ by $\mathbb{G}_\eps^n(x,y)=\mathrm{sign}(\langle x,y\rangle)$ if $|\langle x,y\rangle|\ge \eps$, and $*$ otherwise. It was shown in \cite{hatami2023borsuk} that $\srank(\mathbb{G}_\eps^n)=n$}
  , and then apply a randomized reduction to transfer the lower bound to $\GHD_k^n$. 
  
  \medskip
  One limitation of the Borsuk-Ulam approach of \cite{hatami2023borsuk} is that because of the inevitable randomized reduction, it  cannot be improved to handle the regime $k=o(\log n)$.
In this paper, we take a different approach and resolve the sign-rank of $\GHD_k^n$ for any choice of $k < n/2$, up to lower order terms. 
\begin{equation}
    \srank(\GHD_k^n) \geq (1- o_k(1))2k.
\end{equation}
where the $o_k(1)$ term is $O\left(\sqrt{\frac{\log k}{k}}\right)$.  It is easy to see that $\srank(\GHD_{k}^n)\leq 2k+1$ (See \Cref{fact:ghdupper}).
To the best of our knowledge, this is the first finite matrix family whose sign-rank is resolved up to lower order terms. 

  As the main technical ingredient, we give a new analysis of the \emph{$\Z_2$-coindex} of \emph{Vietoris--Rips (VR) complex} of the $n$-hypercube. These are central objects in metric topology which we briefly discuss now.
 For a metric space~$X$ and scale parameter~$k \ge 0$, the Vietoris--Rips complex~$\VR(X,k)$ is the simplicial complex of all finite subsets of~$X$ of \emph{diameter} at most~$k$. Thus this complex provides a combinatorial-topological encoding of pairwise metric proximity. It was first introduced by Vietoris~\cite{Vietoris1927} to define a homology theory for metric spaces, and later used by Rips and Gromov~\cite{Gromov1987} in the context of geometric group theory to study hyperbolic groups. More recently, topological properties of VR complexes have been studied in the computational topology community in the context of persistent homology~\cite{Carlsson2009, EdelsbrunnerHarer2010, ghrist2008barcodes}. Despite their widespread use and popularity, the topology of VR complexes is poorly understood: only partial results are known for VR complexes of various manifolds~\cite{AdamsBushVirk2025ConnectivitySpheres} and of hypercubes, for example. For hypercubes~\cite{AdamaszekAdams2022, AdamsVirk2024, Feng2025, Shukla2023} exact results are only known in the regime $k\le 3$, along with asymptotic results for $k\geq n/2$. Here we prove essentially tight lower bounds for the $\Z_2$-coindex of VR complexes of $n$-hypercubes for all $k < n/2$.
  

\medskip

 
Another limitation of the Borsuk-Ulam argument of \cite{hatami2023borsuk} is that it is highly tailored to the specific structure of $\GHD_k^n$ and it is not clear whether one can use topology to study the sign-rank of arbitrary sign matrices.  This naturally raises the following meta-question:
\begin{center}
\emph{Is there  a systematic topological framework  for sign-rank lower bounds of arbitrary sign matrices?}
\end{center}

In this paper   we build a framework using $\Z_2$-topology to lower bound the sign-rank of any sign matrix. 
First, we introduce a \emph{correspondence} between partial sign matrices and \emph{free $\Z_2$--simplicial complexes}, and reinterpret sign-rank as the linear analogue of \emph{$\Z_2$-index}. This turns lower bounds on sign-rank into questions about equivariant topology.
$\Z_2$-index is a  global topological invariant amenable to a broad range of equivariant methods far 
beyond the Borsuk--Ulam theorem, which will be discussed later in the paper.



After applying our framework to sign-rank of $\GHD_k^n$, we also reinterpret VC dimension topologically and obtain a chain of inequalities between VC dimension, $\Z_2$-coindex, $\Z_2$-index, and sign-rank, along with separations between all these parameters. 

\noindent
(1) \emph{VC vs. $\Z_2$-coindex}: for partial matrices, our result for Gap Hamming already yields a $1$-vs-$\Omega(\log N)$ separation between VC dimension and $\Z_2$-coindex. We get improved separations  by translating state of the art  constructions of small triangulations of the real projective space due to Adiprasito et al. \cite{adiprasito2022subexponential}.  For \emph{total matrices}, we leave the possibility that VC dimension and coindex are \emph{equivalent in the constant regime}, which if true, entirely reframes PAC learnability of total concept classes as a topological property and has  important implications on VC dimension  of disambiguations of Gap Hamming Distance, which we discuss later.

\noindent
(2) \emph{$\Z_2$-index vs. sign-rank}:
we also establish separations between $\Z_2$-index and sign-rank by developing a general method to upper bound $\Z_2$-index.  For total matrices, we leave the possibility that $\Z_2$-index and sign-rank are equivalent in the constant regime, which if true will imply that  \emph{constant sign-rank} is a topological phenomenon and would allow one to reformulate many fundamental open problems about sign-rank  in terms of $\Z_2$-index of $\Z_2$-complexes.

To state our results formally, we briefly introduce the main framework and the notion of \emph{sign complex} of a sign matrix. For more details see \Cref{section:framework}. The reader is referred to \Cref{section:z2background} for all the necessary background on $\Z_2$-equivariant topology.

\subsection{The main framework} 
    Given a matrix $A\in \{-1,1,*\}^{M\times N}$ with columns indexed by $[N]$, we  construct an  abstract simplicial complex called the \emph{sign complex}  denoted by $S(A)$. This is a simplicial complex built on two copies of the column set, namely
\[
\{1^-,1^+,\dots,N^-, N^+\}.
\]
For each row $r \in \{-1,1,*\}^N$, define the simplex
\[
\sigma_r^+ \coloneqq \{\, i^+ : r_i = +1 \,\} \cup \{\, i^- : r_i = -1 \,\},
\]
and its antipodal copy
\[
\sigma_r^- \coloneqq \{\, i^- : r_i = +1 \,\} \cup \{\, i^+ : r_i = -1 \,\}.
\]
The simplices of $S(A)$ are all subsets of $\sigma_r^+$ or $\sigma_r^-$ for some row $r$ of $A$.
There is a natural free $\Z_2$-action on $S(A)$ that exchanges $i^-$ and $i^+$. 
 This  turns $S(A)$ to a  \emph{free $\mathbb Z_2$--simplicial complex}.
For more detail, see \Cref{section:framework}.
We then consider a geometric realization $|S(A)|\subset \R^N$ by exploiting its antipodal symmetry given by
\[
j^+ \mapsto e_j,\qquad j^- \mapsto -e_j
\]
where $e_j$ is the $j$th standard basis element 
in \(\R^N\), and extending affinely on each simplex. 
We show the sign-rank of $A$ is one less than the smallest dimension $d$ such that $|S(A)|$  can be mapped linearly into $\R^{d+1}$ while avoiding the origin.
 More formally, for a free $\Z_2$-complex $K$, let $\ind^{\mathrm{lin}}(K)$ denote the smallest integer $d$ such that there exists a linear map $g\colon \R^N \to \R^{d+1}$ such that 
 \[
0 \notin g(|K|).
\] 
Such a linear map $g$ induces a simplex-wise linear $\Z_2$-equivariant map $|K| \to \R^{d+1}$ and every simplex-wise linear $\Z_2$-equivariant map $|K| \to \R^{d+1}$ comes about in this way.  We prove the following.
\begin{restatable}[Main lemma 1]{lemma}{mainlem}
\label{lem:main-lemma}
For every partial sign matrix \(A\),
\[\ind^{\mathrm{lin}}(S(A)) = \srank(A)-1.\]
\end{restatable}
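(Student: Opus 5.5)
The plan is to prove the two inequalities separately, each by an explicit translation between a rank-$(d+1)$ sign representation of $A$ and a linear map $g\colon \R^N\to\R^{d+1}$ whose image of $|S(A)|$ misses the origin. The key observation is that $|S(A)|\subset\R^N$ is the union, over rows $r$ and signs $\pm$, of the convex hulls
\[
|\sigma_r^{\pm}| = \conv\{\pm r_i e_i : r_i\neq *\}.
\]
Since $g$ is linear, we have $0\notin g(|\sigma_r^+|)$ if and only if $0\notin \conv\{r_i g(e_i) : r_i\neq *\}$. The antipodal simplex $\sigma_r^-$ then imposes the same condition automatically, and faces are covered because they are subsets of these hulls. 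So the statement reduces to a finite family of convex-hull conditions, one per row.

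Write $v_i=g(e_i)\in\R^{d+1}$ for each column $i$. By the separating hyperplane theorem (applied to the compact convex set $\conv\{r_iv_i\}$ and the point $0$), the condition $0\notin\conv\{r_i v_i : r_i\neq *\}$ holds if and only if there is $u_r\in\R^{d+1}$ with $\langle u_r, r_i v_i\rangle>0$ for all $i$ with $r_i\neq *$. Equivalently, $\sign\langle u_r,v_i\rangle=r_i$ on the defined entries. Hence the existence of a suitable $g$ into $\R^{d+1}$ is equivalent to the existence of vectors $u_r, v_i\in\R^{d+1}$ with $\sign\langle u_r,v_i\rangle = A_{ri}$ whenever $A_{ri}\in\{\pm1\}$. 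This in turn is equivalent to a real matrix $B=UV^\top$ of rank at most $d+1$ that sign-represents $A$.

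Both directions then follow. Given such a $B$ of rank $d+1$, factor it as $B=UV^\top$ with inner dimension $d+1$. Define $g(e_i)=v_i$ and extend linearly; each $u_r$ certifies that $0\notin g(|\sigma_r^{\pm}|)$, so $\ind^{\mathrm{lin}}(S(A))\le \srank(A)-1$. Conversely, from $g$ with $0\notin g(|S(A)|)$, take $v_i=g(e_i)$ and choose separating vectors $u_r$ row by row. This gives a sign representation of rank at most $d+1$, so $\srank(A)-1\le \ind^{\mathrm{lin}}(S(A))$.

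The only delicate step is the strict separation. Finiteness, hence compactness, of each hull gives a strict inequality $\langle u_r,x\rangle>0$ on the hull, so no entry of $B$ on a defined position is zero. Some minor conventions also need checking:
\begin{itemize}
\item Rows that are entirely $*$ contribute the empty simplex and impose no condition; the corresponding $u_r$ can be taken to be $0$.
\item Rank at most $d+1$ can always be padded up to exactly $d+1$, so the two notions of "smallest dimension" match.
\end{itemize}
I expect no real obstacle beyond this bookkeeping.
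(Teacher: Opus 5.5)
Your proposal is correct and follows essentially the same route as the paper. Part 1 sets $g(e_j)=v_j$ and uses $u_r$ to certify $\langle u_r, g(x)\rangle>0$ on each row simplex. Part 2 extracts $u_r$ by separating $0$ from the compact hull $g(|\sigma_r|)$; the paper does this concretely by taking $u_r$ to be the point of that hull nearest the origin, which is the standard proof of the separation theorem you invoke.

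One cosmetic point: the paper's definition of sign-rank asks for nonzero vectors, so for an all-$*$ row you should pick any nonzero $u_r$ rather than $u_r=0$.
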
 
See \Cref{section:indlinsignrank} for details and proof.
In particular, sign-rank can be viewed as a linear analogue of a global topological invariant called the \emph{$\Z_2$-index}. The $\Z_2$-index of a free $\Z_2$-complex $K$, denoted by $\ind(K)$, is the smallest integer $d$ for which there exists a $\Z_2$–equivariant continuous map (i.e., continuous maps that respect the antipodal symmetry $f(-x)=-f(x)$)
\[
f\colon|K| \to \Sp^d.
\]
Since the map $g$ induces a $\Z_2$–equivariant map $f\colon |K|\to \Sp^{d}$, defined by $f(x)\coloneq\frac{g(x)}{\|g(x)\|}$, we have that for any free $\Z_2$-complex $K$
\[
\ind(K) \le \ind^{\mathrm{lin}}(K)
\]
which combined with \Cref{lem:main-lemma} implies the following inequality.
\begin{restatable}[Main inequality]{corollary}{mainineq}\label{corollary:mainineq}
For every partial sign matrix $A$,
\[
\ind(S(A)) \le \srank(A)-1.
\]
\end{restatable}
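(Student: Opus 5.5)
The corollary follows by combining \Cref{lem:main-lemma} with the general inequality $\ind(K)\le \ind^{\mathrm{lin}}(K)$ for free $\Z_2$-complexes, applied to $K=S(A)$. So the plan is to verify that inequality carefully and then substitute.

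First, set $d=\ind^{\mathrm{lin}}(S(A))$. By definition there is a linear map $g\colon\R^N\to\R^{d+1}$ with $0\notin g(|S(A)|)$, where $|S(A)|\subset\R^N$ is the antipodally symmetric realization sending $j^{\pm}\mapsto \pm e_j$. We need to check that this realization is genuinely a geometric realization compatible with the $\Z_2$-action. Every simplex of $S(A)$ uses at most one of $j^+,j^-$ for each $j$, so its vertices map to $\pm e_j$ for distinct $j$. These are affinely independent, so the simplex is embedded. The free involution $j^+\leftrightarrow j^-$ is then realized by $x\mapsto -x$ on $|S(A)|$, and $|S(A)|$ is closed under negation because $\sigma_r^-$ is the antipode of $\sigma_r^+$.

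Next, define $f(x)=g(x)/\|g(x)\|$. This is continuous on $|S(A)|$ since $g$ does not vanish there. It is equivariant because linearity gives $g(-x)=-g(x)$, hence $f(-x)=-f(x)$. Thus $f\colon|S(A)|\to\Sp^d$ is a continuous $\Z_2$-map, which gives $\ind(S(A))\le d$.

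Finally, \Cref{lem:main-lemma} gives $d=\srank(A)-1$, so $\ind(S(A))\le\srank(A)-1$. The only point needing care is the one noted above: the antipodal realization really realizes $S(A)$ with the involution acting as $-\mathrm{id}$. Once that is checked, the rest is immediate.
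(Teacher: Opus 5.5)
Your proposal is correct and follows the paper's own argument. The paper likewise normalizes the linear map to $f=g/\|g\|$, which gives an equivariant map $|K|\to\Sp^d$ and hence $\ind(K)\le\ind^{\mathrm{lin}}(K)$, and then applies \Cref{lem:main-lemma}. Your extra check that the crosspolytope realization is a genuine equivariant realization with the involution acting as $x\mapsto -x$ is taken for granted in the paper's background section.
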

\Cref{corollary:mainineq} allows one to use the full machinery of $\Z_2$-equivariant topology to prove sign-rank lower bounds. Index lower bound techniques include Borsuk--Ulam type arguments
\cite{borsuk1933drei,lovasz1978kneser,matouvsek2003using}, nerve lemmas and connectivity \cite{matouvsek2003using},
the Fadell--Husseini cohomological index / Stiefel--Whitney classes~\cite{fadell1988ideal,blagojevic2017beyond, matouvsek2003using},
equivariant obstruction theory
\cite{blagojevic2017beyond},
and discrete Morse theory
\cite{forman1998morse}.
In particular, our framework allows one to  apply topological methods in settings where classical sign-rank techniques are ineffective.  

Moreover, the correspondence between free $\Z_2$-simplicial complexes and partial sign-matrices is in fact two-way:
\emph{ every free $\Z_2$–simplicial complex $K$ is isomorphic to $S(A_K)$ for some partial sign matrix $A_K$. If $K$ has $2N$ vertices and $M$ facets, then $A_K$ has $N$ columns and $M$ rows.
Moreover, $A_K$ is a total sign matrix iff all facets of $K$ have size $N$.}
Hence, in order to upper bound the index of  $K$, it is enough to upper bound the sign-rank of $A_K$. This viewpoint can be  beneficial in settings where topology of $K$ is difficult to study but $A_K$ is amenable to sign-rank \emph{upper bound} methods including sign alternating number \cite{alon1985geometrical}, polynomial threshold degree \cite{klivans2001learning}, support rank \cite{goos2025sign},  communication protocols \cite{paturi1986probabilistic}, protocols with Hamming oracles\cite{hatami2022lower, goos2025sign}, and analytic parameters such as factorization norm \cite{hatami2022lower}.


\medskip
As our main application, we combine several tools from $\Z_2$-topology to obtain an essentially tight lower bound on the sign-rank of $\GHD_k^n$ for all $k < n/2$ which is discussed next.

\subsection{Main application: sign-rank of Gap Hamming Distance}\label{subsection:gap_hamming_intro}

It is easy to see that, by projecting to the first $2k+1$ coordinates:
\[\srank(\GHD_k^n)\leq 2k+1.\] 
Indeed, identify rows and columns with $\{\pm1\}^n$ and map each
$x,y\in\{\pm1\}^n$ into $x',y'\in\mathbb{R}^{2k+1}$ by only keeping the first $2k+1$
coordinates and observe that $\sign(\langle x',y'\rangle) = 1$ if $d_H(x,y)\leq k$ and $-1$ if $d_H(x,y)\geq n-k$.
See \Cref{fact:ghdupper} for details.
In this paper we show that this projection map is  optimal up to lower order terms.
\begin{restatable}[Main application]{theorem}{strongbound}\label{thm:strongbound}
For any $k < n/2$,
\[
\srank(\GHD_{k}^n)\ge (1-o_k(1))\cdot 2k .
\]
where the $o_k(1)$ term is $O\left(\sqrt{\frac{\log k}{k}}\right)$.
\end{restatable}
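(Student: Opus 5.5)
The plan is to combine \Cref{corollary:mainineq} with a lower bound on the $\Z_2$-coindex of $S(\GHD_k^n)$. Since $\ind\ge\coind$, it suffices to build a $\Z_2$-equivariant map $\Sp^{d}\to|S(\GHD_k^n)|$ with $d\ge(1-O(\sqrt{\log k/k}))2k-1$.

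\textbf{Step 1: identify the complex.} First I will identify $S(\GHD_k^n)$ combinatorially. Index columns by $y\in\{\pm1\}^n$, and identify the vertex $y^+$ with $y$ and the vertex $y^-$ with the antipode $\bar y$. Then for a row $x$,
\[
\sigma_x^+=\{y: d_H(x,y)\le k\}\cup\{\bar y: d_H(x,y)\ge n-k\}.
\]
Since $d_H(x,\bar y)=n-d_H(x,y)$, this is exactly the Hamming ball $B(x,k)$, with the caveat that each point of $\{\pm1\}^n$ appears twice (once as $y^+$, once as $\bar y^{\,-}$). To avoid this doubling, I will restrict to a set of columns containing one representative from each antipodal pair. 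Passing to this submatrix can only decrease sign-rank, so a lower bound for it suffices. The resulting complex is the \emph{Hamming ball complex} on $Q_n$: its simplices are subsets of balls of radius $k$, with the free involution $z\mapsto\bar z$.

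\textbf{Step 2: the equivariant map.} Set $m=d+1$ and choose unit vectors $g_1,\dots,g_n\in\R^{m}$ in quantitatively general position. Concretely, for a small $\delta>0$, I want every point $u\in\Sp^{d}$ to have at most $r$ indices with $|\langle g_i,u\rangle|<\delta$, where $r$ is only slightly larger than $d$. Triangulate $\Sp^{d}$ antipodally symmetrically with mesh much smaller than $\delta$. Send each vertex $u$ to the cube point $z(u)=\sign(Gu)$, perturbing ties consistently so that $z(-u)=\overline{z(u)}$. This vertex map is equivariant by construction.

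For any simplex $\tau$ of the triangulation with barycenter $u_0$, the patterns $z(u)$, $u\in\tau$, agree on every coordinate $i$ with $|\langle g_i,u_0\rangle|\ge\delta$. They can therefore disagree only on the at most $r$ "near-orthogonal" coordinates. Let $c$ be the cube point that agrees with these common coordinates and, on the at most $r$ free coordinates, takes the value that is the majority over the vertices of $\tau$ in each coordinate (or, more simply, any point at distance at most $\lceil r/2\rceil$ from all vertices of $\tau$). Then all vertices of $\tau$ lie in $B(c,\lceil r/2\rceil)$. So whenever $r\le 2k$, the vertex map is simplicial into the ball complex, and extending linearly gives the desired equivariant map $\Sp^{d}\to|S(\GHD_k^n)|$.

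For the center to be uniformly good, I need to be careful. On a subcube of dimension $r$, any set of points lies within radius $r/2$ of some point only if it is contained in a subcube-ball. The ``majority'' choice works if the vertex patterns, restricted to the free coordinates, lie within a common half-space-like region, and this follows from the sign patterns of nearby directions. I will verify this using that nearby $u$'s induce a sign pattern of a single hyperplane arrangement near $u_0$.

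\textbf{Step 3: the main obstacle.} The crux is the quantitative general-position statement: for every $u\in\Sp^{d}$, only about $d(1+O(\sqrt{\log k/k}))$ of the $g_i$ make angle within $\delta$ of orthogonal to $u$. Exact general position only gives at most $d$ rows exactly orthogonal to $u$, and a positive-width slab costs extra. I will first try choosing the $g_i$ with very small $n$ (e.g.\ $n$ close to $2k$, taking $n$ to be the number of genuinely used coordinates). The $\sqrt{\log k/k}$ loss would then come from a union bound over a $\delta$-net, combined with concentration of the number of $g_i$ in the slab $\{|\langle g,u\rangle|<\delta\}$.

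\textbf{Step 4: padding to general $n$.} Finally, to handle arbitrary $n>2k$, I will embed the construction into $Q_n$. The extra coordinates must be assigned equivariantly without increasing distances within a simplex. I intend to use extra vectors $g_i$ that are copies of existing ones, duplicated in balanced $\pm$ pairs so that their sign flips cancel in the ball-radius count. I expect this padding to be routine but fiddly, while Step 3 is the real difficulty.
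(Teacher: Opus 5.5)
\textbf{Gap in Step 2.} The step that fails is Step 2, and that is where the real difficulty lies. You claim the vertex patterns of a small simplex fit in a Hamming ball of radius $\lceil r/2\rceil$ on the $r$ free coordinates, via the majority center or via some half-space structure of the arrangement near $u_0$. This is false.

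Take $d=4$, $k=2$, and a point $u_0\in\Sp^4$ lying on four of the hyperplanes $g_i^\perp$; generic $g_i$ have such points. The four functionals $\langle g_i,\cdot\rangle$ form linear coordinates on the tangent space at $u_0$, so all $2^4$ orthants occur arbitrarily close to $u_0$. Nothing in your construction stops a tiny $4$-simplex of a fine symmetric triangulation from having vertex patterns $0000,0001,1110,1111$ (plus a fifth) on these coordinates. Here $r=4\le 2k$, yet no center is within distance $2$ of all four. Being within $2$ of $0000$ and $1111$ forces weight exactly $2$, and then the center is at distance $3$ from one of $0001$, $1110$. So the local arrangement gives you nothing near vertices of the arrangement, and you must allow radius $r/2$ plus a lower-order term.

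\textbf{Step 3 is not the obstacle.} Fix $g_1,\dots,g_n$ (unit vectors) with every $d+1$ of them linearly independent. Any $n$ works, so Step 4 is unnecessary, and duplicating $g_i$'s would destroy exactly this property. By compactness there is $\mu>0$ such that no $u\in\Sp^d$ has $|\langle g_i,u\rangle|<\mu$ for $d+1$ indices. With mesh below $\mu$, every simplex then has at most $r\le d$ coordinates on which its vertices disagree, with no slab counting or nets.

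\textbf{The missing idea.} Pay for the center, not for $r$. On the $r\le d$ free coordinates of a simplex, choose the center uniformly at random. Each vertex is then at distance $\mathrm{Bin}(r,1/2)$ from it. By Hoeffding and a union bound over the at most $d+1$ vertices, some center is within $r/2+O(\sqrt{r\log d})$ of all of them. This is at most $k$ once $d\le 2k-C\sqrt{k\log k}$, and this is where the $\sqrt{\log k/k}$ loss actually comes from.

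\textbf{Comparison with the paper.} With this repair your outline becomes a valid proof along a route different from the paper's. The paper shows $\coind(\VR(Q_n,k))\ge t$ with $t\approx 2k-O(\sqrt{k\log k})$. It proves the $\Z_2$-subcomplex of $\VR(Q_n,k)$ supported on $t$-faces is $(t-1)$-connected, using Bendersky--Grbic connectivity of $\VR(Q_{t'},k)$ on each face and nerve lemmas over the $t$-skeleton of the cube. It then maps $\VR(Q_n,k)$ into $S(\GHD_k^n)$. Your route replaces all of this connectivity machinery with an explicit equivariant simplicial map from a fine triangulation of $\Sp^d$. Your Step 1 identification of the column-restricted sign complex with the radius-$k$ Hamming ball complex is correct, and is a cleaner form of the paper's map $h$.
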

See \Cref{section:application_gaphamming} for the proof.
\noindent
As mentioned before, the best previous bound, due to Hatami, Hosseini, and Meng \cite{hatami2023borsuk} was that
\begin{equation}\label{eq:HHM}
\srank(\GHD_{k }^n) = \Omega\left(\frac{k}{\log(\frac{n}{k})}\right).    
\end{equation}

\Cref{thm:strongbound} is proved by lower bounding the \emph{$\Z_2$-coindex} which can be viewed as a dual of $\Z_2$-index. The $\Z_2$-coindex of a $\Z_2$-complex $K$ is the largest $d$ for which there is a $\Z_2$-equivariant continuous map 
\[
\Sp^d\to |K|.
\]
Moreover, the definitions imply that 
\begin{equation}
\coind(K)\leq \ind(K).
\end{equation}
An instructive analogy is to compare the coindex and index with \emph{clique number} and \emph{chromatic number} of graphs. While clique number (i.e., homomorphism from largest clique to the graph) is like coindex and detects large forced local structure, but the chromatic number (i.e., smallest homomorphism to a clique) is like index and captures global obstructions and can be significantly higher than clique number.  

Nevertheless, in the case of $S(\GHD_k^n)$ we prove that coindex and index are essentially equal and large, which implies the bound on sign-rank.
\begin{restatable}{theorem}{strongboundGHD}\label{thm:strongboundGHD}
For any \(k< n/2\),
\[
\coind\!\bigl(S(\GHD_k^n)\bigr)\ge (1-o_k(1))\cdot 2k .
\]
\end{restatable}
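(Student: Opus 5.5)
We would build an explicit $\Z_2$-equivariant simplicial map from a fine antipodally symmetric triangulation of $\Sp^d$ into $S(\GHD_k^n)$, with $d=(1-O(\sqrt{\log k/k}))\,2k$. The map sends each vertex of the triangulation to a point of the hypercube via sign patterns of a hyperplane arrangement.

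\textbf{Step 1: a ball complex on the cube.} Identify rows and columns with $\{\pm1\}^n$. The facet $\sigma_x^+$ contains $y^+$ iff $d_H(x,y)\le k$, and contains $y^-$ iff $d_H(x,\bar y)\le k$, where $\bar y=-y$. Thus $y^-$ and $\bar y^+$ lie in exactly the same facets. Let $C$ be the complex on $\{\pm1\}^n$ whose simplices are the subsets of Hamming balls $B(x,k)$, with the free action $z\mapsto -z$ (free because $k<n/2$). Define a vertex map $\phi\colon C\to S(\GHD_k^n)$ by $\phi(z)=z^+$ if $z_1=+1$ and $\phi(z)=(\bar z)^-$ if $z_1=-1$. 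Then $\phi(-z)$ is the antipode of $\phi(z)$. Moreover, $\phi$ sends a subset of $B(x,k)$ into $\sigma_x^+$, because $(\bar z)^-$ and $z^+$ lie in the same facets. Hence $\phi$ is an equivariant simplicial map, and it suffices to find an equivariant simplicial map from a triangulation of $\Sp^d$ into $C$.

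\textbf{Step 2: sign patterns of an arrangement.} Fix $d$ and choose $v_1,\dots,v_n\in\R^{d+1}$ in general position, so that any $d+1$ of them are linearly independent; this needs $n\ge d+1$, which holds since $d<2k<n$. Every point $p\in\Sp^d$ then lies on at most $d$ of the hyperplanes $v_i^\perp$, and some open neighborhood $U_p$ of $p$ meets no other hyperplane. By compactness and a Lebesgue-number argument, take a centrally symmetric triangulation $T$ of $\Sp^d$ (for instance an iterated barycentric subdivision of the boundary of the cross-polytope) so fine that every simplex lies in some $U_p$. After a small equivariant perturbation, no vertex of $T$ lies on any hyperplane. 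Define $s(u)=(\sign\langle v_i,u\rangle)_i\in\{\pm1\}^n$. This is equivariant, since $s(-u)=-s(u)$.

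\textbf{Step 3: covering the image of a simplex.} For a simplex $\tau\subseteq U_p$, the at most $d+1$ vectors $s(u)$, $u\in\tau$, agree on every coordinate outside $I=\{i:\langle v_i,p\rangle=0\}$, where $|I|\le d$. So we need a center $x$ that copies the common coordinates outside $I$ and lies within distance $k$, on $I$, of the at most $d+1$ restricted patterns in $\{\pm1\}^{I}$. This is a pure covering statement: any $d+1$ points of $\{\pm1\}^m$ with $m\le d$ lie in a Hamming ball of radius $m/2+O(\sqrt{m\log d})$. We would prove it by choosing the center uniformly at random. Each distance is then $\mathrm{Bin}(m,1/2)$, and a Chernoff bound plus a union bound over the $d+1$ points gives a center with all distances at most $m/2+\sqrt{2m\ln(2(d+1))}$. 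Finally, take the largest $d$ with $d/2+C\sqrt{d\log d}\le k$, that is $d\ge 2k-O(\sqrt{k\log k})=(1-O(\sqrt{\log k/k}))\,2k$. Then $s$ is an equivariant simplicial map $T\to C$, and composing with $\phi$ gives $\coind(S(\GHD_k^n))\ge d$.

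We expect the main obstacle to be Step 3, and the way it interacts with Step 2. The naive choice of center handles only radius $|I|$, which yields $d\approx k$. Reaching $2k$ requires that each simplex meets only one local cluster of at most $d$ hyperplanes and has only $d+1$ vertices, so that the random-center covering bound applies rather than having to cover a whole subcube. If the general-position or mesh argument proves delicate, the fallback is to triangulate by the barycentric subdivision of the arrangement's face poset. There each simplex is a chain of faces whose zero sets are nested inside that of the minimal face, which gives the same structure explicitly.
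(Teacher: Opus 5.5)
Your proof is correct, and it takes a genuinely different route from the paper's. The paper never writes down a map from a sphere. It maps $\VR(Q_n,k)$ into $S(\GHD_k^n)$ by the midpoint map $y\mapsto\tfrac12 y^-+\tfrac12\bar y^+$; this is the continuous counterpart of your $\phi$, and both rest on $y^-$ and $\bar y^+$ lying in the same facets. It then proves that the symmetric subcomplex $\VR^t(Q_n,k)$ of simplices supported on $t$-faces, with $t\approx 2k-O(\sqrt{k\log k})$, is $(t-1)$-connected. This uses several ingredients:
\begin{itemize}
\item a cover by copies of $\VR(Q_{t'},k)$, $t'\le t$, which are highly connected by the Bendersky--Grbic theorem plus a Chernoff estimate;
\item an identification of the nerve of this cover with the nerve of the cover of the cube's $t$-skeleton by its $t$-faces;
\item Bj\"orner's connectivity nerve lemma.
\end{itemize}
Connectivity then gives the coindex bound.

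You instead build the equivariant map $\Sp^d\to|C|$ directly from the sign vectors of a generic central arrangement, where at most $d$ coordinates vary locally. Your $\sqrt{k\log k}$ loss comes from a single random-center union bound, rather than from the threshold needed to make Bendersky--Grbic applicable. Your argument is elementary and self-contained, and it produces an explicit map.

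What the paper's route buys is the stronger intermediate statement (Main Lemma 2): $\VR(Q_n,k)$ itself has coindex $(1-o_k(1))2k$, which the paper presents as of independent interest. Your target $C$, the complex of subsets of radius-$k$ balls, strictly contains $\VR(Q_n,k)$. Your covering step cannot give the Vietoris--Rips condition: nothing prevents two vertices of a small simplex near $p$ from lying in opposite local orthants, and then they differ in all $|I(p)|\approx 2k$ free coordinates.

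A minor point: the perturbation step needs some slack in the Lebesgue-number choice. You can drop it altogether, though. A vertex lying in $U_p$ can only lie on hyperplanes indexed by $I(p)$, where any equivariant resolution of zero signs is harmless for the covering step.
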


In particular, the same lower bound holds for the sign-rank of~$\GHD_k^n$. In \Cref{section:weaker} we give a weaker lower bound of $k$ on the $\Z_2$-coindex of $S(\GHD_{k}^n)$ by exploiting
the topological connectivity of the $k$-skeleton of the cubical
\emph{CW-complex} structure on the boundary of the hypercube.
While this approach does not give the optimal bound and is shy of a factor of 2,
it shows the topological connectivity mechanism in a more straightforward way.

We prove \Cref{thm:strongboundGHD} by exploiting more than cubical faces of the hypercube. 
The right object to study is called the Vietoris-Rips complex of the hypercube, denoted by $\VR(Q_n,k)$. 
The Vietoris--Rips complex of the hypercube $\VR(Q_n,k)$ is the simplicial complex on
vertex set $\{0,1\}^n$ whose faces are subsets of vertices of
pairwise Hamming distance at most $k$. 
As discussed previously, very little is known about the topology of $\VR(Q_n,k)$ beyond $k\leq 3$ or the dense regime $k > n/2$, and essentially no sharp topological information is available in the sparse regime $k \ll n/2$.

As our main technical lemma, we prove a sharp lower bound on coindex of $\VR(Q_n,k)$ and then give a $\Z_2$-equivariant map from $\VR(Q_n,k)$ to $S(\GHD_k^n)$ which implies \Cref{thm:strongboundGHD}.
\begin{restatable}[Main lemma 2]{lemma}{VRZtwo}\label{lem:VRZ2}
For any $k < \frac{n}{2}$,
\[
\coind(\VR(Q_n,k))\geq (1-o_k(1))\,2k.
\]
\end{restatable}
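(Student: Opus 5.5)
The plan is to construct, for $d=(1-\eps_k)\,2k$ with $\eps_k=O(\sqrt{\log k/k})$, an explicit $\Z_2$-equivariant simplicial map from a fine centrally symmetric triangulation of $\Sp^d$ into $\VR(Q_n,k)$. Here the $\Z_2$-action on $\VR(Q_n,k)$ sends $x$ to its complement $\bar x$. Such a map exists for $k<n/2$ because $d_H(x,\bar x)=n>k$, so the action is free.

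The map is a rounding by a hyperplane arrangement. Choose vectors $v_1,\dots,v_n\in\R^{d+1}$ and send a vertex $u$ of the triangulation (in general position with respect to the $v_i$) to $x(u)\in\{0,1\}^n$ with $x(u)_i=[\langle u,v_i\rangle>0]$. Equivariance is automatic, since $x(-u)=\overline{x(u)}$. The map on vertices extends simplicially, provided that for every simplex $\tau$ of the triangulation the vertices $x(u)$, $u\in\tau$, have pairwise Hamming distance at most $k$. Since $d_H(x(u),x(w))$ is the number of hyperplanes $v_i^\perp$ separating $u$ and $w$, the whole task reduces to a geometric statement:
\begin{quote}
find $v_1,\dots,v_n$ and a symmetric triangulation of $\Sp^d$ such that every simplex is cut by at most $k$ of the hyperplanes $v_i^\perp$.
\end{quote}

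\textbf{Step 1 (counting cuts).} For a spread-out configuration (e.g.\ $v_i$ i.i.d.\ uniform, or a spherical design), the number of hyperplanes separating $u,w$ is about $n\,\angle(u,w)/\pi$. So I want simplices of angular diameter $\theta$ with $n\theta/\pi\le k$. The naive version of this is useless: $\theta\approx\pi k/n$ is far too small to be compatible with only $n$ hyperplanes in dimension $d\approx 2k$. The count must be organized by cells of the arrangement rather than by angle.

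\textbf{Step 2 (triangulating by the arrangement).} The regions of the arrangement are cones; their intersections with $\Sp^d$ give a symmetric cell decomposition whose vertex sets are sign vectors. I would take a symmetric triangulation that refines it, e.g.\ a barycentric-type subdivision, and bound the number of hyperplanes separating two cells lying in a common simplex.

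\textbf{Step 3 (optimizing).} To make the dimension bound $d\approx 2k$ appear, I would choose $v_i$ on a trigonometric moment curve (symmetric cyclic/neighborly configuration). Neighborliness of cyclic polytopes is exactly the mechanism in which ``each small face sees about half of the $2k$ directions''. I then expect a concentration or discrepancy estimate to lose only a $\sqrt{\log k/k}$ fraction, via a union bound over the polynomially many relevant simplices with Chernoff-type deviations of order $\sqrt{k\log k}$.

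\textbf{Step 4 (all $n>2k$).} Finally I must handle every $n>2k$, not only $n$ that are large compared to $k$. Padding coordinates is problematic: any coordinate copied from an existing one, or from its complement, doubles the distance contribution. So the construction in Step 3 should be carried out directly with exactly $n$ vectors. For $n$ close to $2k$, I would instead use the $d=1$-style picture (equally spaced directions) generalized to a cyclic configuration of $n$ points.

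\textbf{Main obstacle.} The crux is Step 3: proving that the chosen configuration admits a symmetric triangulation of $\Sp^{(1-o(1))2k}$ in which every simplex is separated by at most $k$ hyperplanes. This should hold uniformly in $n$, and the error must be controlled sharply enough to get the $O(\sqrt{\log k/k})$ loss.
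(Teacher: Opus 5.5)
\textbf{There is a genuine gap: the step that carries all the content is not proved.}

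Your reduction is correct. Take a centrally symmetric triangulation of $\Sp^d$ whose vertices avoid the hyperplanes $v_i^\perp$, and round each vertex to its sign vector. This gives an equivariant simplicial map into $\VR(Q_n,k)$ exactly when no simplex has two vertices separated by more than $k$ of the hyperplanes. But that geometric statement is the entire content of the lemma, and the proposal does not establish it:
\begin{itemize}
\item Step 3 is only an expectation. The Chernoff/union-bound heuristic has nothing to act on for a deterministic moment-curve configuration, and for random configurations you note yourself that angle-based counting fails.
\item Step 4 is not specified.
\item The error term $O(\sqrt{\log k/k})$ is read off from the statement rather than derived.
\end{itemize}

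The concrete suggestions you do make also fail as stated. Once $d>k$, every vertex $p$ of the arrangement lies on $d$ of the hyperplanes. For a configuration in general position, the $2^d$ regions around $p$ realize all sign patterns on those $d$ coordinates, with the remaining coordinates fixed. In a barycentric-type subdivision of the arrangement's cell decomposition, $p$ is joined by an edge to the barycenter of every region around it. So however $p$ is perturbed off the hyperplanes, one such edge reaches the region with the opposite pattern on those coordinates. That edge has Hamming length $d>k$.

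More generally, in a geodesic triangulation any simplex containing $p$ in its relative interior must have all $d$ of these coordinates non-constant on its vertices, while keeping pairwise distances at most $k\approx d/2$. You would therefore need strongly skewed simplices around every arrangement vertex, chosen consistently across neighboring cells, compatibly with the antipodal map, and uniformly in $n>2k$. Nothing in the proposal says how to do this.

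\textbf{How the paper proceeds instead.} The paper never builds an explicit map from a sphere.
\begin{itemize}
\item It takes $t$ maximal with $k>t/2+2\sqrt{t\log t}$, so $t\ge 2k-O(\sqrt{k\log k})$.
\item It works with the antipodally invariant subcomplex $\VR^t(Q_n,k)$ of simplices supported on $t$-faces of the cube.
\item This subcomplex is covered by the induced complexes $\VR(Q_n,k)|_F\cong \VR(Q_t,k)$. Their nonempty intersections are $\VR(Q_{t'},k)$ with $t'\le t$.
\item Relative to $t'$ the scale is in the dense regime, since $2k-t'>4\sqrt{t\log t}$. So the Bendersky--Grbic theorem, or contractibility when $t'\le k$, makes each intersection $(t-1)$-connected. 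This Chernoff step is where the $\sqrt{\log k/k}$ loss actually arises.
\item The nerve of the cover is isomorphic to the nerve of the cover of the $t$-skeleton of the cube by its closed $t$-faces, hence is $(t-1)$-connected.
\item The connectivity nerve lemma then makes $\VR^t(Q_n,k)$ $(t-1)$-connected, so $\coind(\VR(Q_n,k))\ge t$.
\end{itemize}
Because $\VR^t(Q_n,k)$ is defined for every $n>t$, this handles all $n>2k$ at once and sidesteps your padding problem.

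To turn your route into a proof you would need an actual construction, with verification, of the skewed symmetric triangulation. As written, the proposal is a reformulation of the lemma, not a proof of it.
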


\paragraph{Proof idea of \Cref{lem:VRZ2}.}
A standard way to lower bound $\coind(\VR(Q_n,k))$ is to prove
a lower bound on the homotopy connectivity of $\VR(Q_n,k)$. However, this does not work
in our setting, since $k$ may be arbitrarily small compared to $n$, making the
complex extremely sparse and poorly connected.
Instead, the key step is to identify a \emph{suitable symmetric subcomplex $K$} of
$\VR(Q_n,k)$ and prove that its connectivity is nearly $2k$ which implies a lower bound on $\coind(K)$ and then by monotonicity of $\coind(\cdot)$ we get the lower bound on $\coind(\VR(Q_n,k))$. 
To lower bound connectivity of $K$, the idea is to
cover it by pieces supported on lower-dimensional faces of the
hypercube, each isomorphic to $\VR(Q_t,k)$ for $t\approx 2k-\sqrt{k}$. A result of
Bendersky and  Grbic \cite{bendersky2023connectivity} implies that each such piece has high
connectivity. We then glue these pieces together using two applications of the nerve lemma,
which allows us to transfer this local connectivity to the  connectivity of~$K$, and finally transfer this to a lower bound on  the $\Z_2$-coindex of $\VR(Q_n,k)$. 


\subsection{VC dimension and coindex}
The $VC$ dimension of a matrix $A\in \{-1,1,*\}^{M\times N}$ is the largest number $d$
for which there exists a set of columns $S\subseteq[N]$ with $|S|=d$
such that every sign pattern $P\in\{-1,1\}^S$ appears as the restriction
of some row of $A$ to $S$. 
It is well known that by Radon's theorem, $VC(A)$ provides a lower bound on sign-rank.
However, this bound is typically weak as VC dimension only captures local combinatorial structure: there exist matrices with
constant VC dimension whose sign-rank is polynomially large
\cite{alon2016sign}.

Here, VC dimension of our sign complex framework has a natural topological interpretation. 
Recall that the vertex set of the sign complex
$S(A)$ is
\[
\{1^+,1^-,\dots,N^+,N^-\},
\]
and that $S(A)$ is a subcomplex of the boundary of the crosspolytope
$\partial \Diamond_N$. Let $\omega_\Diamond(K)$ of a free $\Z_2$-complex be the 
 largest $d$ for which the complex $K$ has $\partial \Diamond_d$ as an induced  free $\mathbb{Z}_2$--subcomplex. 
 Then it is easy to see that 
 \[\frac{\omega_\Diamond(S(A))}{2} \leq \VC(A)\leq \omega_\Diamond(S(A))\]
 See \Cref{thm:VCDiamond}.
Since $\partial \Diamond_d$ is 
homeomorphic to the sphere $\Sp^{d-1}$ with the antipodal action,
its presence inside $S(A)$ immediately implies a lower bound on
the $\mathbb{Z}_2$--coindex, namely, the largest $k$ for which there is a continuous $\Z_2$-equivariant map from $\Sp^k$ to $|S(A)|$.
\begin{proposition}\label{prop:vccoindex}
For every partial sign matrix $A$,
\[
\VC(A)-1 \le \coind(S(A)).
\]
\end{proposition}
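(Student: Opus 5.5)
Set $d=\VC(A)$; if $d=0$ there is nothing to prove, since $\coind$ of any nonempty free $\Z_2$-complex is at least $0$ (a pair of antipodal vertices gives an equivariant map from $\Sp^0$). For $d\ge 1$, we will exhibit a copy of $\partial\Diamond_d$ as a free $\Z_2$-subcomplex of $S(A)$ and compose with an equivariant homeomorphism $\Sp^{d-1}\to|\partial\Diamond_d|$. Note that we do not need the full strength of the comparison $\omega_\Diamond(S(A))/2\le \VC(A)\le\omega_\Diamond(S(A))$ from \Cref{thm:VCDiamond}. Only the elementary direction ``a shattered set yields a cross-polytope boundary'' is required, and we will verify it directly from the definition of $S(A)$.

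\textbf{Step 1.} Let $T\subseteq[N]$ with $|T|=d$ be shattered by the rows of $A$. The faces of $\partial\Diamond_T$ on the vertex set $\{i^\pm: i\in T\}$ are exactly the subsets of sets of the form $\{i^{\epsilon_i}: i\in T\}$ with $\epsilon\in\{\pm1\}^T$. Fix such an $\epsilon$. By shattering, some row $r$ has $r_i=\epsilon_i\in\{-1,1\}$ for every $i\in T$. Then $\{i^{\epsilon_i}:i\in T\}\subseteq\sigma_r^+$, so this set, and every subset of it, is a simplex of $S(A)$. Hence $\partial\Diamond_T$ is a subcomplex of $S(A)$. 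It is closed under the swap $i^+\leftrightarrow i^-$, so it is a $\Z_2$-invariant subcomplex. Its vertices realize at $\pm e_i$, so inclusion of geometric realizations is an equivariant embedding $|\partial\Diamond_T|\hookrightarrow|S(A)|$.

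\textbf{Step 2.} The realization $|\partial\Diamond_T|$ is the boundary of the cross-polytope $\conv\{\pm e_i: i\in T\}$ in $\R^T$. The radial map $x\mapsto x/\|x\|_1$ is an odd homeomorphism $\Sp^{d-1}\to|\partial\Diamond_T|$, since this boundary is exactly the unit sphere of the $\ell_1$ norm. Composing it with the inclusion from Step 1 gives an equivariant map $\Sp^{d-1}\to|S(A)|$. Therefore $\coind(S(A))\ge d-1$.

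There is essentially no obstacle. The only points needing care are that a shattering row must take genuine $\pm1$ values (not $*$) on $T$, which holds by definition of shattering, and that the geometric realization used for $S(A)$ is compatible with the standard one of $\partial\Diamond_T$. Both are immediate.
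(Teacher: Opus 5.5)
Your proof is correct and takes essentially the same route as the paper. The paper goes through $\omega_\Diamond$ in \Cref{thm:VCDiamond}: a shattered set yields a $\Z_2$-invariant copy of $\partial\Diamond_{|T|}$ in $S(A)$, which is equivariantly homeomorphic to $\Sp^{|T|-1}$, and monotonicity of coindex then gives the bound. You do the same without the $\omega_\Diamond$ bookkeeping, and you also spell out two details the paper leaves implicit, namely the $\VC(A)=0$ case and the explicit $\ell_1$-radial homeomorphism.
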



Overall, we obtain the following chain of inequalities:
\begin{equation}\label{eq:inequalitychain}
\VC(A)-1
\;\le\;
\coind(S(A))
\;\le\;
\ind(S(A))
\;\le\;
\srank(A)-1.
\end{equation}
Hence the classical VC lower bound on sign-rank appears in our
framework as a very special case of a topological obstruction:
it corresponds (up to a factor of 2) to the existence of a large antipodal
crosspolytope embedded inside the sign complex. 
However, the topological viewpoint is considerably more flexible.
While VC dimension only detects the presence of a crosspolytope
boundary inside $S(A)$, the $\mathbb{Z}_2$--index can capture much
more complicated global topology.  This additional power is crucial
for obtaining strong lower bounds for sign-rank in settings where VC dimension
remains constant. 

A similar inequality as in \Cref{prop:vccoindex} has also been independently introduced by \cite{chornomaz2025spherical} in the context of learning theory where they associate a  simplicial complex similar to the sign complex, to a total concept class and relate its coindex to VC dimension, list replicability, and many other important learnability parameters. Also in \cite{blondal2025simplicial}, coindex has been proven to lower bound \emph{simplicial covering dimension} which is another fundamental parameter characterizing list replicability.  Both of these works have led to powerful applications in learning theory. 

Nevertheless, even though coindex is useful for lower bounding fundamental learning parameters, but $\Z_2$-index captures more \emph{global obstructions} and can give stronger sign-rank lower bounds. 
In  \Cref{section:indexcoindex_separation} we use classical constructions in topology that give sign complexes with coindex equal to 1, but index arbitrarily large. This is done via another topological invariant called \emph{Stiefel-Whitney height} that sits between coindex and index.



\subsection{Separation results}
\medskip
A basic remaining question we need to understand is to what extent the complexity measures in the chain of inequalities in \Cref{eq:inequalitychain} can be separated. 

\paragraph{Separating $\Z_2$-index and sign-rank.}
We split to the case of total matrices and the more general case of  partial matrices.

\begin{enumerate}
    \item \textbf{Total matrices.}
    We show that a random $N\times N$ total sign matrix $A$ has
    $\ind(S(A)) = O(\log N)$ with high probability,
    while its sign-rank is $\Omega(N)$ \cite{alon1985geometrical}.
    Similarly we show that for $N=2^n$, the $N$ by $N$ Hadamard matrix $H_N$ has $\ind(S(H_N))$ at most $ 2\log N$. However, it is already known, from seminal work of Forster \cite{forster2002linear} that  its sign-rank is at least $\sqrt{N}$.

      More generally, we introduce a combinatorial parameter, for an arbitrary free $\Z_2$-complex~$K$, as the \emph{height} of its associated partial sign matrix $A_K$, and show that $\ind(K)$ is at most twice its height.  This provides a systematic way to obtain logarithmic upper bounds
    on $\Z_2$-index even when the underlying linear dimension is large.
    For more details, see \Cref{section:index-signrank-total-separation}. 
    
    \item \textbf{Partial matrices.} We construct random  $N\times N$ partial matrices built on top of the incidence matrix of the finite projective plane that has $\Z_2$-index equal to 1, but has sign-rank $\Omega(\sqrt{N})$. For more details, see \Cref{section:index_signrank_separation_partial}.
\end{enumerate}

We leave the following remaining important question as open.
\begin{restatable}{question}{TotalSeparationQuestion}\label{question:totalseparation}
Does there exist a function $f\colon \mathbb{N}\to\mathbb{N}$ such that for every total sign matrix~$A$,
\[
\srank(A)\leq f(\ind(S(A))) ?
\]
\end{restatable}
If the answer to \Cref{question:totalseparation} is positive, then theories of $\Z_2$-index and sign-rank for total matrices will be equivalent in the $O(1)$ regime. This will have major consequences, as one can reformulate a large number of fundamental open problems related to sign-rank entirely in the language of topology. Some of these  problems will be discussed in conclusion in \Cref{section:conclusion}.

\paragraph{Separating  VC dimension and coindex.}
Next, we address the question of separating $VC$ and coindex.
We obtain the following separation using a result of Adiprasito et al.~\cite{adiprasito2022subexponential} on triangulations of the projective space with few vertices. For details see \Cref{section:vccoindex_separation}.
\begin{restatable}[Separating VC and coindex]{proposition}{VCcoindexsep}
\label{prop:VC-coindex-separation}
There exists a partial matrix 
$A\in \{-1,1,*\}^{M\times N}$ such that \(\mathrm{VC}(A) = 1\) but
\[
\coind(S(A)) = \Omega\left(\frac{\log^2 N}{\log \log N}\right).
\]
\end{restatable}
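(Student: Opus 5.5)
The plan is to start from a small triangulation of real projective space and read off a partial sign matrix from it. By Adiprasito et al.~\cite{adiprasito2022subexponential}, for every $d$ there is a triangulation $T$ of $\RP^d$ with $V=\exp\bigl(O(\sqrt{d}\log d)\bigr)$ vertices. Its preimage $\tilde T$ under the double cover $\Sp^d\to\RP^d$ is a centrally symmetric triangulation of $\Sp^d$ with $2V$ vertices, and the deck transformation acts freely on it. Since $\tilde T$ is a free $\Z_2$-simplicial complex, the two-way correspondence stated earlier gives a partial matrix $A=A_{\tilde T}$ with $S(A)\cong \tilde T$. Here $A$ has $N=V$ columns, one for each antipodal pair of vertices, and $M$ rows, one for each facet. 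We may take $|\tilde T|$ itself to be the antipodal sphere $\Sp^d$, so the identity map is equivariant and $\coind(S(A))\ge d$. Inverting $\log N = O(\sqrt d\log d)$ gives $d=\Omega\bigl(\log^2 N/\log\log N\bigr)$.

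Next I will check that $\VC(A)=1$. Take two columns $i,j$ and suppose all four patterns on $\{i,j\}$ appear. Each pattern comes from a row of $A$, that is, a facet of $\tilde T$. The pattern $(+,+)$ then yields a facet of $\tilde T$ containing $i^+,j^+$, and $(+,-)$ yields one containing $i^+,j^-$. The simplices are closed under the antipodal map, so $\{i^-,j^+\}$ and $\{i^-,j^-\}$ are also edges. Hence the $4$-cycle $i^+ j^+ i^- j^-$ lies in $\tilde T$. This is a full induced $\partial\Diamond_2$, since $i^\pm$ and $j^\pm$ are never adjacent to their own antipodes.

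The cleanest way to rule this out is to require that $T$ be $2$-neighborly in a suitable sense. The condition I want is that any two antipodal pairs of vertices of $\tilde T$ span at most two of the four possible edges between them. Equivalently, in $T$ the two vertices $[i],[j]$ can be joined by at most one edge. This holds automatically because $T$ is a simplicial complex.

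To make this precise, note that an edge of $T$ between $[i]$ and $[j]$ lifts to exactly two antipodal edges in $\tilde T$, either $\{i^+,j^+\},\{i^-,j^-\}$ or $\{i^+,j^-\},\{i^-,j^+\}$. All four edges would require two distinct edges of $T$ joining $[i]$ and $[j]$, which is impossible in a simplicial complex. So on any pair of columns at most two of the four sign patterns occur, and $\VC(A)\le 1$. Some column takes both signs, so $\VC(A)=1$.

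The main obstacle is the bookkeeping. I must verify that the Adiprasito et al.\ triangulation is a genuine simplicial complex whose lift is a genuine simplicial complex with a free action, with no vertex adjacent to its antipode. This should follow from $T$ being simplicial, since an edge $\{v^+,v^-\}$ would project to a loop. I must also check that the correspondence $K\mapsto A_K$ matches the column count $N=V$. Finally, the asymptotic inversion has to be done carefully: from $\log N\asymp \sqrt d\log d$ we get $\log\log N\asymp\log d$, and so $d\asymp \log^2N/\log\log N$.
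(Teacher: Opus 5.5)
Your construction and the $\VC$ argument are the same as the paper's. You lift a small triangulation $T$ of $\RP^d$ to a free $\Z_2$-triangulation of $\Sp^d$ and pass to $A=A_K$ via \Cref{lem:signcomp_universal}. You then exclude $\partial\Diamond_2$ because the four edges between $i^\pm$ and $j^\pm$ would all lie over the single edge $\{[i],[j]\}$ of $T$, which has only two lifts. Your edge-counting phrasing is, if anything, cleaner than the paper's loop-lifting one.

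The gap is in the final inversion. Squaring $\log N\asymp\sqrt d\,\log d$ gives $\log^2 N\asymp d\log^2 d$. Combined with $\log d\asymp\log\log N$, this yields
\[
d\asymp\frac{\log^2 N}{(\log\log N)^2},
\]
not $\log^2 N/\log\log N$; you dropped a factor of $\log d$. So from the vertex bound you quote, the argument proves only $\coind(S(A))=\Omega\bigl(\log^2N/(\log\log N)^2\bigr)$, which is weaker than the proposition.

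The paper reaches the stated rate because it uses the vertex bound in the form $N\le\exp\bigl((\tfrac12+o(1))\sqrt{d\log d}\bigr)$, with the logarithm inside the square root. Then $\log^2 N\le(\tfrac14+o(1))\,d\log d$, and inverting does give $d=\Omega(\log^2N/\log\log N)$. To prove the statement as written, you need that stronger input rather than the one you cite.

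Be aware, though, that as far as I recall Adiprasito, Avvakumov and Karasev state their result as $\exp\bigl((\tfrac12+o(1))\sqrt{d}\,\log d\bigr)$, which is your form. If that is right, this method (yours or the paper's) only supports the $(\log\log N)^2$ denominator. The exponent in the statement should therefore be checked against the source.

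A minor point: your claim that some column takes both signs is needed for $\VC(A)\ge1$, which the paper leaves implicit, and it deserves a line of justification. For example, if every column had constant sign, $|S(A)|$ would split into two disjoint closed antipodal pieces. That contradicts the connectedness of $\Sp^d$ for $d\ge1$.
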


However, in the case of total matrices we leave the possibility that coindex and VC dimension are qualitatively equivalent which if true, will entirely reframe learnability of total concept classes (constant VC) as a topological property of having bounded coindex. 
\begin{restatable}{question}{VCcoindexseptotal}
\label{question:VC-coindex-separation-total}
Does there exist a function $f:\mathbb{N}\to\mathbb{N}$ such that for every total sign matrix $A$,
\begin{equation}\label{eq:coindbound}
\coind(S(A)) \le f(\VC(A))?
\end{equation}
\end{restatable}
We will give another interesting implication of \Cref{question:VC-coindex-separation-total} in the concluding remarks in \Cref{section:conclusion} on the VC dimension of disambiguations of Gap Hamming Distance to total functions.

  \subsection{Paper organization}
In \Cref{section:background} we review the
necessary background on sign-rank and
$\mathbb{Z}_2$-equivariant topology.
In \Cref{section:framework} we introduce the main framework.
In \Cref{section:application_gaphamming} we show  the lower bound for the Gap Hamming Distance matrix. In \Cref{section:basic} we discuss VC dimension and other basic properties of the sign complex.
In \Cref{section:separation} we show  quantitative gaps between
VC dimension, coindex, index and sign-rank.
Finally, in \Cref{section:conclusion} we conclude with some open problems.

\section{Background}\label{section:background}
In this section we introduce the necessary background on sign-rank, Gap Hamming Distance, and $\Z_2$-topology.

\subsection{Sign-rank}\label{section:sign-rankbackground}
The \emph{sign-rank} of a total sign matrix $A_{M\times N}\in\{-1,1\}^{M\times N}$, is the minimum rank of a real matrix $B_{M\times N}$ such that 
\[
\mathrm{sign}(B_{x,y}) = A_{x,y}
\qquad \text{for all } (x,y) \in [M]\times [N].
\]
This definition extends naturally to partial sign matrices: if $A_{x,y} = *$, then the corresponding entry $B_{x,y}$ may be any real number.
In other words, the sign-rank of $A\in\{-1,1,*\}^{M\times N}$, denoted by $\srank(A)$, is the smallest integer $d$ for which
there exist vectors
\[
u_1,\dots,u_m \in \R^{d}\setminus\{0\}
\qquad\text{and}\qquad
v_1,\dots,v_n \in \R^{d}\setminus\{0\}
\]
such that for every specified entry $A_{ij}\in\{\pm1\}$ we require
\[
\sign\big(\langle u_i, v_j\rangle\big) \;=\; A_{ij}.
\]
Equivalently, each row vector $u_i$ defines the homogeneous hyperplane
\[
H_i := \{x\in \R^d : \langle u_i,x\rangle = 0\},
\]
and it strictly separates the column-point sets
\[
C_i^+ := \{v_j : A_{ij}=+1\},
\qquad
C_i^- := \{v_j : A_{ij}=-1\},
\]
in the sense that $\langle u_i, v\rangle>0$ for all $v\in C_i^+$ and
$\langle u_i, v\rangle<0$ for all $v\in C_i^-$.
This viewpoint is used in learning theory, where sign-rank is known as \emph{dimension complexity}, capturing the smallest dimension $d$ in which a concept class can be realized as a family of half-spaces and points in $\R^d$.

\subsection{Background on $\mathbb{Z}_2$-Topology}\label{section:z2background}
Here we discuss a minimal background on $\Z_2$-topology that we need to use, and refer the reader to \cite{matouvsek2003using,kozlov2008combinatorial,de2013course} for more details.
 \paragraph{Abstract simplicial complex.}
An \emph{abstract simplicial complex} $K$ on a finite vertex set $V$ is a family 
$K \subseteq 2^V$ such that:
\begin{enumerate}
    \item $\{v\} \in K$ for every $v \in V$,
    \item if $\sigma \in K$ and $\tau \subseteq \sigma$, then $\tau \in K$.
\end{enumerate}
Elements $\sigma \in K$ are called \emph{simplices}. 
The dimension of $\sigma$ is $\dim(\sigma) = |\sigma|-1$, and 
$\dim(K)$ is the maximum dimension of a simplex in $K$. A \emph{facet} of $K$ is a maximal simplex under inclusion.

As a basic example, the \emph{full $d$-dimensional simplex} is the complex
\[
\Delta^d \coloneqq 2^{[d+1]},
\]
i.e., it consists of all subsets of a $(d+1)$-element vertex set.

The \emph{$k$-skeleton} of  $K$ is the subcomplex
\[
K^{(k)} \coloneq \{ \sigma \in K : \dim(\sigma) \le k \}.
\]

Let $K$ and $L$ be simplicial complexes on vertex sets $V(K)$ and $V(L)$.
A map on vertices $\varphi\colon V(K) \to V(L)$ induces  a \emph{simplicial map} $\varphi\colon K\to L$
if for every simplex $\sigma \in K$ the image
\(
\varphi(\sigma) \coloneq \{ \varphi(v) : v \in \sigma \}
\)
is a simplex of $L$ (i.e.\ $\varphi(\sigma) \in L$).
The simplicial bijection $\varphi$ is called an \emph{isomorphism} if
\[
\sigma \in K
\quad\Longleftrightarrow\quad
\varphi(\sigma) \in L.
\]
In which case, we say that $K$ and $L$ are
\emph{isomorphic} and write
\(
K \cong L.
\)

A \emph{$\mathbb{Z}_2$-action} on $K$ is a simplicial map $\tau \colon K \to K$
such that $\tau^2 = \mathrm{id}$ (equivalently, $\tau(\tau(v))=v$ for all vertices $v$).
Furthermore, the map $\tau$ is \emph{free} if $\tau(\sigma)\neq \sigma$ for any simplex $\sigma\in K$.
A free $\Z_2$-action is also called an \emph{involution}.

Let $(K,\tau_K)$ and $(L,\tau_L)$ be simplicial complexes equipped with free
$\mathbb{Z}_2$-actions.
A simplicial map $f \colon K \to L$ is called \emph{$\mathbb{Z}_2$-equivariant}
if it commutes with the involutions, that is,
\[
f(\tau_K(v)) = \tau_L(f(v))
\quad \text{for every vertex } v \in V(K).
\]
Equivalently,
\[
f \circ \tau_K = \tau_L \circ f
\]
as maps of simplicial complexes.

Let $K$ be a simplicial complex with an involution $\tau\colon K \to K$.
It follows that no simplex of $K$ contains both $v$ and $\tau(v)$ for any vertex $v$, hence $K$ can be viewed as a subcomplex of the boundary of the crosspolytope which is discussed next.

Given simplicial complexes $K$ and $L$ on disjoint vertex sets, 
their \emph{join} $K * L$ is defined as
\[
K * L \coloneqq 
\{\sigma \cup \tau : \sigma \in K,\ \tau \in L\}.
\]
If $K$ and $L$ do not necessarily have disjoint vertex sets,
their \emph{deleted join} is
\[
K *_\Delta L
\coloneqq
\{ (\sigma \times \{1\}) \cup (\tau \times \{2\}) : \sigma \in K,\ \tau \in L,\ 
\text{and } \sigma\cap \tau =\varnothing \}.
\]

\paragraph{The boundary of the crosspolytope.}

The $n$-dimensional crosspolytope $\Diamond_d$ is the convex hull of 
$\{\pm e_1,\dots,\pm e_d\} \subset \mathbb{R}^d$.  
Its boundary complex, denoted by $\partial \Diamond_d$ is viewed here as an abstract simplicial complex: the vertices are
\[
\{1^+,1^-,\dots,d^+,d^-\},
\]
and a subset forms a simplex if it does not contain both $i^+$ and $i^-$ 
for any $i$.
The involution exchanging $i^+$ and $i^-$ defines a free $\mathbb{Z}_2$-action.  
Hence $\partial \Diamond_d$ is a free $\mathbb{Z}_2$-simplicial 
complex.
Equivalently, $\partial \Diamond_d$ can be expressed as the deleted join of two copies of the simplex:
\[
\partial \Diamond_d \cong \Delta^{d-1} *_\Delta \Delta^{d-1}.
\]

Finally, any free $\Z_2$-complex can be viewed as a subcomplex of the crosspolytope.
\begin{lemma}\label{lem:subcrosspolytope}
Let $K$ be a simplicial complex on $2N$ vertices with a $\mathbb{Z}_2$-action $\tau$ that is free. Then $K$ is (isomorphic to) a subcomplex of $\partial \Diamond_N$.
\end{lemma}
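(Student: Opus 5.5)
The idea is to pair up the vertices into antipodal orbits and then read off the embedding into the crosspolytope directly from those pairs.

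\textbf{Orbits.} Since $\tau$ is an involution on the vertex set $V$ with $|V|=2N$, its orbits have size one or two. Freeness rules out fixed vertices: if $\tau(v)=v$, then the $0$-simplex $\{v\}\in K$ would satisfy $\tau(\{v\})=\{v\}$, contradicting freeness. So $V$ splits into exactly $N$ orbits $\{v_i,\tau(v_i)\}$, $i=1,\dots,N$. Choosing one representative $v_i$ from each orbit, I define the vertex bijection
\[
\varphi\colon V\to\{1^+,1^-,\dots,N^+,N^-\},\qquad \varphi(v_i)=i^+,\quad \varphi(\tau(v_i))=i^-.
\]
This map is equivariant by construction, since it intertwines $\tau$ with the swap $i^+\leftrightarrow i^-$.

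\textbf{Simplices land in $\partial\Diamond_N$.} It remains to show that $\varphi$ maps every simplex of $K$ to a simplex of $\partial\Diamond_N$. That amounts to checking that no $\sigma\in K$ contains both $v$ and $\tau(v)$; this is the step that carries the actual content. Suppose $\{v,\tau(v)\}\subseteq\sigma$. Because $K$ is closed under taking faces, the edge $e=\{v,\tau(v)\}$ lies in $K$. But $\tau(e)=\{\tau(v),v\}=e$, which again contradicts freeness. Hence $\varphi(\sigma)$ never contains both $i^+$ and $i^-$, so $\varphi(\sigma)\in\partial\Diamond_N$.

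\textbf{Conclusion.} Therefore $\varphi$ is an injective, $\mathbb{Z}_2$-equivariant simplicial map. Its image $\varphi(K)=\{\varphi(\sigma):\sigma\in K\}$ is a subcomplex of $\partial\Diamond_N$, and $\varphi$ is an isomorphism from $K$ onto this image. I do not expect any real obstacle: the only point needing care is that freeness must be applied to the edge $\{v,\tau(v)\}$ and to the vertex $\{v\}$ themselves, not merely to the facets.
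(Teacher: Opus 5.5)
Your proof is correct and follows the paper's argument: you split the vertices into $\tau$-orbits, send a chosen representative $v_i$ to $i^+$ and $\tau(v_i)$ to $i^-$, and use freeness to show that no simplex contains an antipodal pair. The only difference is that you derive this from the definition, via the fixed vertex $\{v\}$ and the fixed edge $\{v,\tau(v)\}$, whereas the paper simply cites this property as ``strong freeness.''
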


\begin{proof}
Since the action is free, the vertices of $K$ split into disjoint orbits of size $2$; choose one
representative $v_i$ from each orbit
so that the vertex set of $K$
decomposes as
\[
V(K)=\{v_1,\tau(v_1),\dots,v_N,\tau(v_N)\}.
\]
Denote the vertices of  $\partial \Diamond_N$  with $\{1^+,1^-,\dots,N^+,N^-\}$. 
Define a map $\phi:V(K)\to \{1^+,1^-,\dots,N^+,N^-\}$ by
\[
\phi(v_i)=i^+,\qquad \phi(\tau(v_i))=i^-
\]
and note that $\phi$ 
extends to an injective and $\Z_2$ equivariant map $\phi:K\hookrightarrow \partial \Diamond_N$.
 
It remains to check that $\phi$ extends to a simplicial map.
Let $\sigma\in K$ be any simplex. Strong freeness implies
 $\phi(\sigma)$ contains at most one of $\{i^+,i^-\}$ for each
$i$, hence $\phi(\sigma)$ is a simplex of $\partial \Diamond_N$.
\end{proof}

\paragraph{Geometric realization in the crosspolytope.}
Let $K$ be a free $\Z_2$ simplicial complex on $2N$ vertices. To talk about various topological properties of $K$ such as connectivity, homotopy,  and $\Z_2$-index, we pass to a geometric realization of $K$, denoted by $|K|$. 
By \Cref{lem:subcrosspolytope}, we may identify $K$ with a subcomplex of 
$\partial \Diamond_N$ with vertex set
\[
\{1^+,1^-,\dots,N^+,N^-\}.
\]
We realize $K$ geometrically inside $\R^N$.
Let $\phi : V(K) \to \R^N$ be the embedding defined by
\[
\phi(i^+) = e_i,
\qquad
\phi(i^-) = -e_i
\]
where $\{e_1,\dots,e_N\}$ is the standard basis of $\R^N$.
For each simplex $\sigma \in K$, define its geometric realization as
\[
|\sigma|
=
\left\{
\sum_{v \in \sigma} \lambda_v \, \phi(v)
:
\lambda_v \ge 0,\;
\sum_{v \in \sigma} \lambda_v = 1
\right\}.
\]
The geometric realization of $K$ is
\[
|K|
=
\bigcup_{\sigma \in K} |\sigma|
\subseteq |\partial \Diamond_N| \subset \R^N.
\]
The involution $\tau$ exchanging $i^+$ and $i^-$ corresponds to the map
\[
\tau(x) = -x,
\]
and hence induces a free continuous $\mathbb{Z}_2$-action on $|K|$.

\paragraph{$\Z_2$-index and coindex.}
    Let $K$ be a free $\mathbb{Z}_2$-complex.  
The \emph{$\mathbb{Z}_2$-index} of $K$ is defined as
\[
\mathrm{ind}_{\mathbb{Z}_2}(K)
\coloneqq 
\min \{ d \ge 0 : \exists\ \text{$\mathbb{Z}_2$-equivariant map }
|K| \to \Sp^d \}.
\]
Here $\Sp^d$ is equipped with the antipodal action $x \mapsto -x$.
In particular, if $K$ admits no equivariant map to $\Sp^{d-1}$, 
then $\mathrm{ind}_{\mathbb{Z}_2}(K) \ge d$.
There is also the dual notion of \emph{$\Z_2$-coindex}:
\[
\coind(K):=\max\bigl\{d\ge 0:\ \exists\ \text{a $\Z_2$-equivariant map }
\Sp^d\to |K|\bigr\}.
\]
It is immediate from the definitions that 
\[\coind(K)\leq \ind(K).\]

\paragraph{Connectivity.}

A  standard method to provide a lower bound for $\Z_2$-index of a topological space $X$ is to show that $X$ is highly connected.
A topological space $X$ is \emph{$r$-connected} if for every $k\le r$, every continuous map
\[f\colon \Sp^k \longrightarrow X\]
admits a continuous extension 
\[\tilde{f}\colon B^{k+1} \longrightarrow X\]
on the unit ball~$B^{k+1}$.
The following classical fact relates connectivity with $\Z_2$-coindex.
\begin{proposition}[\cite{matouvsek2003using}, Proposition 5.3.2]
\label{prop:coindex_vs_connectivity}
    Let $X$ be a $r$-connected $\Z_2$-space. Then $\coind(X)\ge (r+1)$. 
\end{proposition}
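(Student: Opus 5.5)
The plan is to build a $\Z_2$-equivariant map $\Sp^{r+1}\to X$ one dimension at a time. At each step we extend over a hemisphere using connectivity, and then fill in the opposite hemisphere by equivariance. Write $T\colon X\to X$ for the free involution. We view $\Sp^j\subset\Sp^{j+1}$ as the equator, and write $B^{j+1}_{\pm}=\{y\in\Sp^{j+1}: \pm y_{j+2}\ge 0\}$ for the two closed hemispheres. Each hemisphere is homeomorphic to the ball $B^{j+1}$ with boundary $\Sp^j$, and the antipodal map exchanges $B^{j+1}_+$ and $B^{j+1}_-$.

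\emph{Base case.} Since $X$ is $r$-connected with $r\ge 0$, it is in particular nonempty. Pick any $x_0\in X$ and define $f_0\colon\Sp^0=\{\pm1\}\to X$ by $f_0(1)=x_0$ and $f_0(-1)=T(x_0)$. This map is equivariant by construction.

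\emph{Inductive step.} Suppose $0\le j\le r$ and we already have a continuous equivariant map $f_j\colon\Sp^j\to X$. Since $j\le r$ and $X$ is $r$-connected, $f_j$ extends to a continuous map $g\colon B^{j+1}_+\to X$. Define $f_{j+1}$ on $\Sp^{j+1}$ by
\[
f_{j+1}(y)=g(y)\ \text{ for } y\in B^{j+1}_+,
\qquad
f_{j+1}(y)=T\bigl(g(-y)\bigr)\ \text{ for } y\in B^{j+1}_-.
\]
On the equator both formulas apply. There $g(y)=f_j(y)$, and by equivariance $T(g(-y))=T(f_j(-y))=f_j(y)$, so the two formulas agree. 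By the pasting lemma, $f_{j+1}$ is therefore continuous.

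It remains to check that $f_{j+1}$ is equivariant. For $y\in B^{j+1}_+$ we have $-y\in B^{j+1}_-$, so $f_{j+1}(-y)=T(g(y))=T f_{j+1}(y)$. For $y\in B^{j+1}_-$ we have $-y\in B^{j+1}_+$, so $f_{j+1}(-y)=g(-y)=T\bigl(T(g(-y))\bigr)=T f_{j+1}(y)$, using $T^2=\mathrm{id}$.

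Running the induction up to $j=r$ produces an equivariant map $f_{r+1}\colon\Sp^{r+1}\to X$, which gives $\coind(X)\ge r+1$.

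The only step that uses a hypothesis is the extension from $\Sp^j$ to $B^{j+1}_+$, and $r$-connectivity supplies exactly the extensions needed for $j=0,\dots,r$. The one point requiring care is the consistency check on the equator, which is why we extend over only one hemisphere and define the other by the involution rather than extending twice. Freeness of $T$ is not needed to construct the map itself; it only ensures that $X$ is a legitimate $\Z_2$-space in the sense used for $\coind$.
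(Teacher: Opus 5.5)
Your proof is correct, including the equator-consistency check, the equivariance verification, and the remark that nonemptiness is part of the connectivity convention. The paper gives no proof of its own and only cites Matou\v{s}ek; your argument is the standard one behind that result (extend over one cell of each antipodal pair using connectivity and define the antipodal cell via the involution), applied to the two-hemisphere cell structure of the sphere, whereas the usual presentation works over antipodal pairs of simplices of a symmetric triangulation.
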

Moreover, if it is possible to equivariantly map such $X$ to some other space $Y$ then the bound carries over as well. This is particularly useful in a scenario where $Y$ is a $\Z_2$-space of low or unknown (global) connectivity yet one can identify a highly connected symmetric subspace of $Y$.
\begin{corollary}
\label{cor:inder_and_connectivity}
     Let $Y$ be a $\Z_2$-space. Suppose that there exists a $r$-connected $\Z_2$-space $X$ admitting $\Z_2$-equivariant map $f\colon X\to Y$. Then $\ind(Y)\ge (r+1)$.
\end{corollary}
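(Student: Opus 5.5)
The plan is to compose the two equivariant maps and reduce to \Cref{prop:coindex_vs_connectivity}. Since $X$ is an $r$-connected $\Z_2$-space, \Cref{prop:coindex_vs_connectivity} gives $\coind(X)\ge r+1$. So there is a $\Z_2$-equivariant continuous map $g\colon \Sp^{r+1}\to X$.

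Next, compose $g$ with the given equivariant map $f\colon X\to Y$. The composition $f\circ g\colon \Sp^{r+1}\to Y$ is continuous. It is also equivariant, since
\[
f(g(-x)) = f(\tau_X(g(x))) = \tau_Y(f(g(x))).
\]
Hence $\coind(Y)\ge r+1$. Finally, use $\coind(Y)\le \ind(Y)$, which holds for any free $\Z_2$-space: composing an equivariant map $\Sp^{m}\to Y$ with an equivariant map $Y\to \Sp^d$ gives an equivariant map $\Sp^m\to\Sp^d$, so $m\le d$ by Borsuk--Ulam. This yields $\ind(Y)\ge r+1$.

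There is no real obstacle. The only care needed is that $\coind\le\ind$ is stated earlier for free $\Z_2$-complexes, whereas here $Y$ is a general $\Z_2$-space. The same Borsuk--Ulam argument applies verbatim. Alternatively, one can skip coindex altogether: if $h\colon Y\to \Sp^{d}$ were equivariant with $d\le r$, then $h\circ f\circ g\colon \Sp^{r+1}\to\Sp^{d}$ would be equivariant, contradicting Borsuk--Ulam.
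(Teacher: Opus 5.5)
Your proof is correct and follows the same route as the paper. The paper gives no separate proof; it justifies the corollary by the remark that the bound from \Cref{prop:coindex_vs_connectivity} carries over along equivariant maps. That is exactly your composition $f\circ g\colon \Sp^{r+1}\to Y$, followed by $\coind(Y)\le\ind(Y)$, which in fact also gives the stronger bound $\coind(Y)\ge r+1$ that the paper later uses.
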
  

\paragraph{Covering and nerve lemmas.} An important approach to establish connectivity of a space is to cover it via highly connected pieces and then apply a nerve lemma.

Let $X = |K|$ be the geometric realization of a simplicial complex $K$. A family of subcomplexes \(\mathcal{C} = \{C_i\}_{i\in I}\) is called a \emph{cover} of $X$ if
\[
X = \bigcup_{i\in I}C_i.
\]
Furthermore, $\mathcal{C}$ is called a \emph{good cover} if for every nonempty finite intersection 
\[C_{i_1}\cap \dots\cap C_{i_s}\]
the intersection is contractible.
The \emph{nerve} of $\mathcal{C}$, denoted by $\Nrv(\mathcal{C})$, is the simplicial complex with vertex set $I$ where a finite subset $\{i_1,\cdots,i_s\}\subset I$ spans a simplex if and only if 
\[C_{i_1}\cap \dots\cap C_{i_s}\neq \varnothing.\]

\begin{lemma}[Nerve Lemma]\label{lem:goodcover_nerve}
    Let $X = |K|$ be the geometric realization of a simplicial complex $K$ and let \(\mathcal{C} = \{C_i\}_{i\in I}\) be a finite good cover of $X$ by subcomplexes. Then the geometric realization of $\Nrv(\mathcal{C})$ is homotopy equivalent to $X$:
    \(
    |\Nrv(\mathcal{C})|\simeq X.
    \)
    In particular $|\Nrv(\mathcal{C})|$ and $X$ have the same connectivity.
\end{lemma}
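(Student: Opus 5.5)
The statement to prove is the Nerve Lemma (\Cref{lem:goodcover_nerve}), and I would follow the standard route: the Weil--Borsuk construction through a ``Mayer--Vietoris blowup''. Consider the subspace
\[
Z \;=\; \bigcup_{\varnothing\neq J\subseteq I,\ C_J\neq\varnothing} C_J\times |\Delta^J| \;\subseteq\; X\times|\Nrv(\mathcal C)|,
\qquad C_J\coloneqq \bigcap_{j\in J} C_j .
\]
It comes with two projections, $p\colon Z\to X$ and $q\colon Z\to|\Nrv(\mathcal C)|$. The plan is to show that both are homotopy equivalences. Then $X\simeq Z\simeq|\Nrv(\mathcal C)|$. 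The statement about connectivity follows at once, since $r$-connectivity is a homotopy invariant.

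\emph{Step 1: $p$ is a homotopy equivalence.} For $x\in X$, the fiber $p^{-1}(x)$ is $|\Delta^{I_x}|$, where $I_x=\{i: x\in C_i\}$. This is a full simplex, nonempty because $\mathcal C$ covers $X$, and hence contractible. To get a homotopy inverse, I would choose a partition of unity $\{\phi_i\}$ subordinate to the cover. Because the cover is by closed subcomplexes rather than open sets, I would first thicken each $C_i$ to an open regular neighbourhood $U_i$ in a subdivision. Such a $U_i$ deformation retracts onto $C_i$, and the nonempty intersections of the $U_i$ still deformation retract onto the corresponding $C_J$. Define $s(x)=(x,\sum_i\phi_i(x)e_i)$. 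Then $p\circ s=\mathrm{id}$, and $s\circ p\simeq \mathrm{id}_Z$ by the straight-line homotopy within each fiber simplex.

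\emph{Step 2: $q$ is a homotopy equivalence.} The fiber of $q$ over an interior point of $|\Delta^J|$ is $C_J$, which is contractible because the cover is good. I would argue by induction over the skeleta of $\Nrv(\mathcal C)$, equivalently by a gluing argument. Both $Z$ and $|\Nrv(\mathcal C)|$ are built by attaching, for each simplex $J$, the pieces $C_J\times|\Delta^J|$ and $\mathrm{pt}\times|\Delta^J|$ respectively, along the boundary $|\partial\Delta^J|$. Since $C_J\to\mathrm{pt}$ is a homotopy equivalence and the attachments are cofibrations (all spaces are CW subcomplexes), the gluing lemma for homotopy equivalences gives the equivalence one simplex at a time. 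Finiteness of $I$ makes the induction terminate.

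The main obstacle is Step 2: the homotopy equivalences must be made compatible with the attaching maps, because $C_J\subseteq C_{J'}$ for $J'\subseteq J$. I would handle this by working with the homotopy colimit formalism: $Z$ is the homotopy colimit of the diagram $J\mapsto C_J$ over the face poset of the nerve. The natural transformation to the constant diagram with value a point is objectwise an equivalence, so the homotopy colimits are equivalent, and the homotopy colimit of the point diagram is $|\Nrv(\mathcal C)|$. If a self-contained argument is preferred instead, one can use Quillen's fiber lemma on face posets of the simplicial complexes, or Björner's version for covers by subcomplexes.
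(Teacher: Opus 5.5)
\textbf{The gap is in Step~1.} The paper does not prove this lemma; it quotes it as the classical nerve theorem, so your argument has to stand on its own. The Mayer--Vietoris blowup is the standard route, and your Step~2 is fine for $Z$ built from the closed pieces $C_J$. The finitely many pieces $C_J\times|\Delta^J|$ are compact, hence closed, so $Z$ really is the iterated pushout. Moreover, $C_J\times|\partial\Delta^J|\hookrightarrow C_J\times|\Delta^J|$ is a cofibration, and $q$ commutes strictly with the attaching maps. So the gluing lemma applies directly, and the compatibility problem you anticipate does not arise.

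In Step~1, you define $Z$ using the closed $C_J$, but your section $s(x)=(x,\sum_i\phi_i(x)e_i)$ uses a partition of unity subordinate to the open thickenings $U_i$. From $\phi_i(x)>0$ you only get $x\in U_i$, not $x\in C_i$. So $\sum_i\phi_i(x)e_i$ need not lie in $|\Delta^{I_x}|$, $s$ need not map into $Z$, and the homotopy $s\circ p\simeq\mathrm{id}_Z$ is not even defined. No cleverer choice of the $\phi_i$ helps. For the cover of $[0,2]$ by $C_1=[0,1]$ and $C_2=[1,2]$, a section of $p$ must equal $e_1$ on $[0,1)$ and $e_2$ on $(1,2]$. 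So $p$ has no continuous section at all, even though it is a homotopy equivalence.

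\textbf{Two ways to close it.} The first is to use the thickening consistently. Let $U_i$ be the open star of $\sd(C_i)$ in $\sd(K)$.
\begin{itemize}
\item The open simplex of a chain $\sigma_0\subsetneq\cdots\subsetneq\sigma_k$ lies in $U_i$ iff $\sigma_0\in C_i$. Hence $\bigcap_{j\in J}U_j$ is the open star of $\sd(C_J)$, and the nerve is unchanged.
\item Because $\sd(C_J)$ is a full subcomplex of $\sd(K)$, every nonempty $U_J$ deformation retracts onto $|C_J|$.
\end{itemize}
Now run both steps for the $U_J$; this is the open-cover nerve theorem (as in Hatcher, \emph{Algebraic Topology}, \S4.G). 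Your argument then goes through with three adjustments:
\begin{itemize}
\item Take $Z$ to be the realization of the diagram $J\mapsto U_J$, that is, the quotient topology on $\coprod_J U_J\times|\Delta^J|$. For open pieces this can differ from the subspace topology in $X\times|\Nrv(\mathcal C)|$, and the gluing argument needs the pushout.
\item Choose the partition of unity with $\mathrm{supp}\,\phi_i\subseteq U_i$, so that $s$ lands locally in a single piece.
\item In Step~2, replace the appeal to CW subcomplexes by the fact that $(A\times|\Delta^J|,A\times|\partial\Delta^J|)$ has the homotopy extension property for every space $A$.
\end{itemize}

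The second, shorter way is the Quillen fiber lemma you mention at the end, applied directly to the closed cover. The map $f(\sigma)=\{i:\sigma\in C_i\}$ is order-preserving from the face poset $\mathcal F(K)$ to $\mathcal F(\Nrv(\mathcal C))^{\mathrm{op}}$. It is well defined because the $C_i$ are subcomplexes covering $|K|$. The preimage of $\{J' : J'\supseteq J\}$ is $\mathcal F(C_J)$, whose order complex $\sd(C_J)$ is contractible. Hence $|K|\cong|\sd(K)|\simeq|\sd(\Nrv(\mathcal C))|\cong|\Nrv(\mathcal C)|$, with no thickening needed.
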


Recall that an $N$-dimensional \emph{CW-complex} is defined inductively starting from a discrete space~$X^{(0)}$, the $0$-skeleton or vertex set, and for $n \le N$ constructing the $n$-skeleton $X^{(n)}$ from $X^{(n-1)}$ by gluing a disjoint union of $n$-dimensional balls into $X^{(n-1)}$ via continuous maps from their boundaries to~$X^{(n-1)}$.
The following more general nerve lemma is useful for us~\cite{bjorner2003nerves}.
 
\begin{lemma}\label{lem:connectivity_nerve}
    Let \(\mathcal{C} = \{C_i\}_{i\in I}\) be a finite cover of a CW-complex $X$ by subcomplexes $C_i$. Suppose there is $k$ such that
    \begin{enumerate}
        \item The nerve $\Nrv(\mathcal{C})$ is $k$-connected.
        \item Every nonempty finite intersection $C_{i_1}\cap\cdots\cap C_{i_s}$ is $(k-s+1)$-connected.
    \end{enumerate}
    Then $X$ is $k$-connected.
\end{lemma}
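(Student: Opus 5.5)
The last statement is Lemma \ref{lem:connectivity_nerve}, the generalized nerve lemma for covers by subcomplexes. I will prove it by induction on $|I|$, gluing one piece at a time and using van Kampen and Mayer--Vietoris, with Hurewicz and Whitehead to convert homology back into connectivity. It is cleaner to argue that $X$ and $\Nrv(\mathcal C)$ have isomorphic homotopy groups in degrees $\le k$, and that the comparison map is surjective in degree $k+1$. Since $\Nrv(\mathcal C)$ is $k$-connected, this gives the claim.

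\textbf{Step 1: the diagram of spaces.} Replace $X$ by the homotopy colimit of the diagram $\sigma\mapsto C_\sigma:=\bigcap_{i\in\sigma}C_i$, indexed by the face poset of $\Nrv(\mathcal C)$. Because the $C_i$ are subcomplexes of a CW-complex, the inclusions are cofibrations. The projection lemma then says that $\mathrm{hocolim}\to\mathrm{colim}=X$ is a homotopy equivalence.

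\textbf{Step 2: compare with the nerve.} Consider the map from the homotopy colimit to the homotopy colimit of the constant one-point diagram, which is $|\sd\,\Nrv(\mathcal C)|\cong|\Nrv(\mathcal C)|$. Filter both sides by skeleta of the nerve. Gluing in a simplex $\sigma$ of dimension $s-1$ attaches $C_\sigma\times\Delta^{s-1}$ along $C_\sigma\times\partial\Delta^{s-1}$, which in the quotient is an $(s-1)$-fold suspension of $C_\sigma$. Since $C_\sigma$ is $(k-s+1)$-connected, its $(s-1)$-fold suspension, after collapsing $C_\sigma$ to a point, changes nothing through degree $k$. Hence each stage of the filtration induces an isomorphism on $\pi_j$ for $j\le k$ and a surjection for $j=k+1$.

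Steps 1--2 together give $\pi_j(X)\cong\pi_j(\Nrv(\mathcal C))=0$ for $j\le k$, so $X$ is $k$-connected.

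The main obstacle is the connectivity bookkeeping in Step 2 for low $s$, where fundamental groups appear. For $s=1$, each $C_i$ is only known to be $k$-connected, and van Kampen is needed at $\pi_1$. I would first establish $\pi_1$ directly by van Kampen, using connectedness of pairwise intersections ($s=2$ requires them to be $(k-1)$-connected, hence connected once $k\ge1$). After that, everything is simply connected, and I would finish with homology: a Mayer--Vietoris spectral sequence $E^1_{p,q}=\bigoplus_{|\sigma|=p+1}H_q(C_\sigma)\Rightarrow H_{p+q}(X)$. Here $E^1_{p,q}=0$ for $0<q\le k-p$, so the row $q=0$ computes $H_*(\Nrv(\mathcal C))$, which vanishes through degree $k$. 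Hurewicz then finishes the argument.
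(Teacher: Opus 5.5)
The paper does not prove this lemma; it quotes Bj\"orner~\cite{bjorner2003nerves}, whose argument works with homotopy groups directly, via a fiber theorem for poset maps. It yields the stronger comparison $\pi_j(X)\cong\pi_j(\Nrv(\mathcal C))$ for $j\le k$, with no assumption on the nerve.

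Your proposal proves the lemma only in its last paragraph, and that is a genuinely different and more elementary route:
\begin{enumerate}
\item the Mayer--Vietoris spectral sequence of the cover by subcomplexes gives $\tilde H_n(X)=0$ for $n\le k$;
\item a separate $\pi_1$ argument gives $\pi_1(X)=0$ when $k\ge 1$;
\item Hurewicz then turns this into $k$-connectivity.
\end{enumerate}
This gives only connectivity, not the homotopy-group comparison, and it uses $\pi_1(\Nrv(\mathcal C))=0$ essentially. That is all the paper needs in Step~3 of the proof of \Cref{lem:subcomplex_connected}.

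Step~2, as written, asserts more than its argument supports. The identification $(C_\sigma\times\Delta^{s-1})/(C_\sigma\times\partial\Delta^{s-1})\simeq S^{s-1}\vee\Sigma^{s-1}C_\sigma$ lets you compare relative \emph{homology} of consecutive stages via the five lemma. It does not compare relative homotopy, because homotopy groups fail excision. To get $\pi_j$ you need extra input about $\pi_1$, which is exactly what your last paragraph supplies. The spectral sequence applies directly to the cover of $X$, so you can drop Steps~1--2, and the announced induction on $|I|$, altogether.

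Two points in the last paragraph need tightening.

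\emph{The row $q=0$.} This row is the chain complex of $\Nrv(\mathcal C)$ only in degrees $\le k$, because $C_\sigma$ may be disconnected when $|\sigma|\ge k+2$. However, $d^1\colon E^1_{k+1,0}\to E^1_{k,0}$ factors as the surjective augmentation $\bigoplus_{|\tau|=k+2}H_0(C_\tau)\to\bigoplus_{|\tau|=k+2}\Z$ followed by the simplicial boundary map of the nerve. Hence $E^2_{p,0}\cong H_p(\Nrv(\mathcal C))$ for $p\le k$, which is all you use.

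\emph{The $\pi_1$ step.} This is not the classical Seifert--van Kampen theorem. The $C_i$ have no common base point, and for $k=1$ triple intersections can be disconnected. The statement you need is: if every $C_i$ is simply connected and every nonempty $C_i\cap C_j$ is connected, then $\pi_1(X)\cong\pi_1(\Nrv(\mathcal C))$. Simple connectivity of the nerve is indispensable here: three arcs covering a circle satisfy the other two conditions but $\pi_1\neq 0$.

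Here is a short proof. In the universal cover $p\colon\tilde X\to X$, each $p^{-1}(C_i)$ is a disjoint union of homeomorphic copies of $C_i$. Connectedness of $C_i\cap C_j$ makes the nerve of the lifted cover a covering space of $\Nrv(\mathcal C)$. Since the nerve is simply connected, this covering is a disjoint union of copies of $\Nrv(\mathcal C)$. Connectedness of $\tilde X$ forces a single copy, so $p$ is one-sheeted. Alternatively, apply van Kampen cell by cell along the $1$- and $2$-skeleton filtration of your homotopy colimit.
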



\paragraph{Order Complex and Barycentric subdivision.}

Let $(P,\le)$ be a finite poset.  
The \emph{order complex} of $P$, denoted $\Delta(P)$, is the simplicial complex
 whose simplices are the finite chains in $P$:
\[
\Delta(P)
=
\big\{
\{x_0,\dots,x_k\} \subseteq P
:
x_0 < \cdots < x_k
\big\}.
\]
Vertices correspond to elements of $P$, and simplices correspond to strictly
increasing chains.

\medskip

For a simplicial complex $K$, let $\mathcal{F}(K)$ be its face poset
(nonempty simplices ordered by inclusion).
The \emph{barycentric subdivision} of $K$ is
\[
\sd(K) \coloneqq \Delta(\mathcal{F}(K)).
\]
Thus vertices of $\sd(K)$ are simplices of $K$,
and a simplex of $\sd(K)$ is a chain
\[
\sigma_0 \subsetneq \sigma_1 \subsetneq \cdots \subsetneq \sigma_k.
\]
There is a canonical homeomorphism
\[
|\sd(K)| \cong |K|,
\]
so barycentric subdivision preserves homotopy type.
If $K$ carries a simplicial $\Z_2$-action,
then the induced action on $\mathcal{F}(K)$ makes $\sd(K)$
a simplicial $\Z_2$-complex.
Freeness is preserved.




\section{The sign complex framework}\label{section:framework}
In this section we introduce the main  framework to assign a free $\Z_2$ simplicial complex $S(A)$ to a given partial sign matrix $A$ and vice versa. Then we show that sign-rank is equal to a linear analog of $\Z_2$-index, and in particular $\ind(S(A))$ is a lower bound on $\srank(A)$.
\subsection{Sign complex}

Let $A\in \{-1,1,*\}^{M\times N}$ be a partial sign matrix.
For each row $r \in \{-1,1,*\}^n$, define the simplex
\[
\sigma_r^+ \coloneqq \{\, i^+ : r_i = +1 \,\} \cup \{\, i^- : r_i = -1 \,\},
\]
and its antipodal copy
\[
\sigma_r^- \coloneqq \{\, i^- : r_i = +1 \,\} \cup \{\, i^+ : r_i = -1 \,\}.
\]

The \emph{sign complex} $S(A)$ is the simplicial complex on vertex set
\[
\{1^+,1^-,\dots,N^+,N^-\}
\]
whose simplices are all subsets of $\sigma_r^+$ or $\sigma_r^-$ for some row $r$ of $A$.
The involution exchanging $i^+$ and $i^-$ defines a free $\Z_2$-action on $S(A)$.
This action turns 
$S(A)$ into a \emph{finite free $\Z_2$-simplicial complex}.
This is because by construction, no face contains both $j^+$ and $j^-$.

\begin{lemma}[Universality of sign complexes]\label{lem:signcomp_universal}
Every finite free $\Z_2$-simplicial complex $K$
is isomorphic to $S(A_K)$ for some partial sign matrix $A_K$.   
\end{lemma}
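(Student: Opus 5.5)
By \Cref{lem:subcrosspolytope}, we may first replace $K$ by an isomorphic free $\Z_2$-subcomplex of $\partial\Diamond_N$. Here $2N$ is the number of vertices of $K$, and the involution is $i^+\leftrightarrow i^-$. The point of this relabeling is that every simplex of $K$ now contains at most one of $i^+,i^-$ for each $i$. Such a simplex is therefore exactly a ``signed partial pattern'' on the column set $[N]$.

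Next we build $A_K$ by reading rows off the facets. Enumerate the facets $F_1,\dots,F_M$ of $K$. For each facet $F$ define a row $r_F\in\{-1,1,*\}^N$ by
\[
(r_F)_i=\begin{cases}+1 & i^+\in F,\\ -1 & i^-\in F,\\ * & \text{otherwise.}\end{cases}
\]
This is well defined because $F$ never contains both $i^+$ and $i^-$. By construction $\sigma^+_{r_F}=F$, and $\sigma^-_{r_F}=\tau(F)$. We let $A_K$ be the $M\times N$ matrix with these rows. (To get the sharp count of rows, one could instead take one row per orbit $\{F,\tau(F)\}$ of facets; either choice yields the same complex.)

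Finally we check that $S(A_K)=K$ as sets of simplices. The simplices of $S(A_K)$ are the subsets of $\sigma^\pm_{r_F}$, that is, the subsets of $F$ or of $\tau(F)$ over all facets $F$. Every such set lies in $K$: $K$ is closed under taking subsets, and it is closed under $\tau$ because $\tau$ is a simplicial automorphism. Conversely, every simplex of $K$ lies in some facet $F$, hence is a subset of $\sigma^+_{r_F}$. Both complexes live on the same vertex set $\{1^\pm,\dots,N^\pm\}$, and each vertex of $K$ is a $0$-simplex contained in some facet, so it appears in $S(A_K)$. Hence the identity map on vertices is an isomorphism, and it is $\Z_2$-equivariant.

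None of these steps is a serious obstacle. The only point needing care is the identification with a subcomplex of $\partial\Diamond_N$, so that the sign vector of a facet is well defined. That identification is exactly the content of \Cref{lem:subcrosspolytope}, which uses freeness. As a byproduct, $A_K$ is total iff every facet has exactly $N$ vertices.
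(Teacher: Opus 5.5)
Your proof is correct and follows essentially the same route as the paper. The paper labels the vertex orbits as $j^\pm$ inline and takes one facet per $\tau$-orbit as a row, whereas you invoke \Cref{lem:subcrosspolytope} for the labeling and use all facets (noting that one per orbit also works). Your verification that $S(A_K)=K$, including closure under $\tau$ and that every vertex appears, is spelled out more carefully than in the paper.
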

\begin{proof}
Let $\tau$ be the involution on $K$, hence vertices come in disjoint pairs $\{v,\tau(v)\}$. Index these pairs by $[N]$, labeling them $j^+,j^-$.
Let $\mathcal{F}$ be the set facets of $K$. Note that if $F\in\mathcal{F}$, then $\tau(F)\in \mathcal{F}$. Hence facets come in $\tau$-pairs. Pick an arbitrary facet from each orbit of this action and let $F_1,\cdots,F_m$ be such representatives.

Define a matrix $A_K\in\{-1,1,*\}^{M\times N}$ by
\[
(A_K)_{i,j} =
\begin{cases}
+1 & \text{if } j^+\in F_i,\\ 
-1 & \text{if } j^-\in F_i,\\ 
*  & \text{otherwise.}
\end{cases}
\]
Since $K$ is free, each facet contains at most one of
$j^+,j^-$, hence $A_K$ is well-defined.
By construction the facets of $S(A_K)$ are precisely the
$F_i$ and their $\tau$–images.
Thus $K\cong S(A_K)$.
\end{proof}

\subsection{Sign-rank is a linear analog of $\Z_2$-index}\label{section:indlinsignrank}
Next we prove the main lemma that reformulates sign-rank as the linear analog of $\Z_2$-index.
For a free $\Z_2$-complex $K$ realized as $|K|\subset \R^N$, define
\[
\ind^{\mathrm{lin}}(K)
\coloneqq
\min \Bigl\{ d \ge 0 :
\exists\ \text{linear } g\colon\R^N \to \R^{d+1}
\text{ such that } 0 \notin g(|K|) \Bigr\}.
\]

 \mainlem*

Since the map $g$ induces a $\Z_2$–equivariant map $f\colon |K|\to \Sp^{d}$, defined by $f(x)\coloneq\frac{g(x)}{\|g(x)\|}$, we have that for any free $\Z_2$-complex $K$
\[
\ind(K) \le \ind^{\mathrm{lin}}(K)
\]
Hence, an immediate corollary of \Cref{lem:main-lemma} is as follows.
\mainineq*
\begin{proof}[Proof of \Cref{lem:main-lemma}] We prove both inequalities. 

\medskip

\textbf{Part 1: $\ind^{\mathrm{lin}}(S(A)) \leq \srank(A)-1.$}
Assume $\srank(A)\leq d$.
By definition, there exist nonzero vectors
\[
u_1,\dots,u_N\in \R^d\setminus\{0\},
\qquad
v_1,\dots,v_M\in \R^d\setminus\{0\}
\]
such that for every specified entry $A_{rj}\in\{\pm 1\}$ we have
\[
\sign(\langle u_r, v_j\rangle)=A_{rj}.
\]
Define a linear map
\(
g:\R^N\to \R^d
\)
by setting
\[
g(e_j)=v_j \qquad (j\in[N]).
\]
Since \(g\) is linear, we also have
\(
g(-e_j)=-v_j.
\)
We claim that $0\notin g(|S(A)|)$.
Note that $|S(A)|$ is the crosspolytope realization of $S(A)$.

Let \(x\in |S(A)|\). Then \(x\) lies in some simplex of \(S(A)\). Without loss of generality, there exists a row \(r\in[M]\) and disjoint sets \(J_r^+,J_r^-\subseteq [N]\) such that
\[
A_{rj}=1 \text{ for } j\in J_r^+,\qquad A_{rj}=-1 \text{ for } j\in J_r^-,
\]
and
\[
x=\sum_{j\in J_r^+}\alpha_j e_j-\sum_{j\in J_r^-}\beta_j e_j
\]
for some coefficients \(\alpha_j,\beta_j> 0\) satisfying
\(
\sum_{j\in J_r^+}\alpha_j+\sum_{j\in J_r^-}\beta_j=1.
\)
Therefore
\[
g(x)=\sum_{j\in J_r^+}\alpha_j v_j-\sum_{j\in J_r^-}\beta_j v_j
\]
and
\[
\langle u_r,g(x)\rangle
=
\sum_{j\in J_r^+}\alpha_j\langle u_r,v_j\rangle
-
\sum_{j\in J_r^-}\beta_j\langle u_r,v_j\rangle.
\]
For \(j\in J_r^+\), we have \(\langle u_r,v_j\rangle>0\), and for \(j\in J_r^-\), we have \(\langle u_r,v_j\rangle<0\). Hence every term on the right-hand side is strictly positive, therefore,
\(
\langle u_r,g(x)\rangle>0
\)
so $g(x)\neq 0$. By definition
\(
\ind^{\mathrm{lin}}(S(A))\le d - 1.
\)

\medskip
\textbf{Part 2: $\ind^{\mathrm{lin}}(S(A)) \geq \srank(A) -  1.$}
Assume \(\ind^{\mathrm{lin}}(S(A))\le d-1\). Then there exists a linear map
\(
g:\R^N\to \R^{d}
\)
such that
\(
0\notin g(|S(A)|).
\)
Set
\[
v_j:=g(e_j)\in \R^d \qquad (j\in[N]).
\]
Fix a row \(r\in[M]\), and define
\[
J_r^+:=\{j\in[N]: A_{rj}=1\},
\qquad
J_r^-:=\{j\in[N]: A_{rj}=-1\}.
\]
Note that
\begin{equation}\label{eq:emptyintersection}
\sigma_r = \conv\left(\{e_j:j\in J_r^+\}\cup \{-e_j:j\in J_r^-\}\right)
\end{equation}
is a simplex of $|S(A)|$ hence
\[0\notin C \coloneq g(\sigma_r) = \conv\left(\{v_j:j\in J_r^+\}\cup \{-v_j:j\in J_r^-\}\right). \]
Let $u_r$ be the unique closest point to 0 in $C$.
For any $x\in C$, the function $t\mapsto \|u_r+t(x-u_r)\|^2$ is minimized at $t=0$, giving $\langle u_r,x-u_r\rangle\ge 0$ and hence
\[\langle x,u_r\rangle \geq \|u_r\|^2_2 >0 \quad \text{for all } x\in C.\]
In particular 
\[
\langle u_r,v_j\rangle>0 \quad \text{for all } j\in J_r^+,
\qquad
\langle u_r,-v_j\rangle>0 \quad \text{for all } j\in J_r^-.
\]
Hence if \(A_{rj}\in\{\pm1\}\), then
\(
\sign(\langle u_r,v_j\rangle)=A_{rj}.
\)
Doing this for each row \(r\) gives a sign-rank realization of \(A\) in \(\R^d\).

\end{proof}

\section{Main Application:  Sign-rank of Gap Hamming Distance}\label{section:application_gaphamming}

Let $k < n/2$ and define $\GHD_k^n:\{0,1\}^n\times \{0,1\}^n\to \{-1,1,*\}$ by
\[
\GHD_{k}^n(x,y)=
\begin{cases}
1 & \text{if } d_H(x,y)\leq k,\\
-1 & \text{if } d_H(x,y)\geq n-k,\\
* & \text{otherwise}.
\end{cases}
\]
The following fact is essentially trivial.
\begin{fact}\label{fact:ghdupper}
    For any $k$ and $n$, $\srank(\GHD_{k}^n)\leq   2k+1$.
\end{fact}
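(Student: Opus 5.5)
We give an explicit sign-rank realization in dimension $2k+1$, following the projection idea sketched in the introduction. Identify $\{0,1\}^n$ with $\{\pm1\}^n$ via $b\mapsto (-1)^b$, so that for $x,y\in\{\pm1\}^n$ we have $\langle x,y\rangle = n-2d_H(x,y)$. If $2k+1>n$, we cannot project to $2k+1$ coordinates; in that case we use the full vectors $x,y\in\R^n$. Since $k<n/2$, we have $2k+1\le n$ whenever $n$ is odd, and in every case it suffices to realize $\GHD_k^n$ in dimension $m\coloneqq\min(n,2k+1)$, because $m\le 2k+1$.

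\textbf{Case $m=2k+1\le n$.} For a row $x$ and a column $y$, set $u_x=(x_1,\dots,x_{2k+1})$ and $v_y=(y_1,\dots,y_{2k+1})$. These vectors are nonzero, since all their entries are $\pm1$. Let $d'$ be the number of coordinates among the first $2k+1$ on which $x$ and $y$ differ. Then $\langle u_x,v_y\rangle = 2k+1-2d'$, and $d'\le d_H(x,y)$.
\begin{itemize}[nosep]
\item If $d_H(x,y)\le k$, then $d'\le k$, so $\langle u_x,v_y\rangle\ge 1>0$.
\item If $d_H(x,y)\ge n-k$, then $x$ and $y$ agree on at most $k$ coordinates in total. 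Hence they agree on at most $k$ of the first $2k+1$ coordinates, so $d'\ge k+1$ and $\langle u_x,v_y\rangle\le -1<0$.
\end{itemize}
Thus every specified entry has the correct sign.

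\textbf{Case $m=n<2k+1$.} Use $u_x=x$ and $v_y=y$, so that $\langle x,y\rangle=n-2d_H(x,y)$.
\begin{itemize}[nosep]
\item If $d_H(x,y)\le k<n/2$, this inner product is positive.
\item If $d_H(x,y)\ge n-k>n/2$, it is negative.
\end{itemize}
This gives a realization in dimension $n\le 2k+1$.

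If one prefers a statement in exactly dimension $2k+1$ in the second case, append zero coordinates; this does not change any inner product. In both cases $\srank(\GHD_k^n)\le 2k+1$.

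There is no real obstacle here. The only point needing care is the counting in the $-1$ case: agreement on at most $k$ coordinates overall forces disagreement on a strict majority of the first $2k+1$ coordinates.
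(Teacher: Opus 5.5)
Your proof is correct and uses the same argument as the paper: keep the first $2k+1$ coordinates of the $\pm1$-encodings and count agreements against disagreements, and your treatment of the $d_H(x,y)\ge n-k$ case spells out the step the paper only calls ``similar.'' Your second case never arises: for integers, $k<n/2$ gives $2k\le n-1$, so $2k+1\le n$ for every $n$, not only odd $n$.
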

\begin{proof}
Define
\[
u(x) := ((-1)^{x_1},\dots,(-1)^{x_{2k+1}})\in\{-1,1\}^{2k+1},\qquad
v(y) := ((-1)^{y_1},\dots,(-1)^{y_{2k+1}})\in\{-1,1\}^{2k+1}.
\]

\smallskip
Suppose $d_H(x,y)\leq k$.
Then the total number of disagreements between $u(x)$ and $v(y)$ is at most $k$, hence the total number of agreements is at least $k+1$. Therefore,
$\sign(\langle u(x),v(y)\rangle)=1$.
The case of $d_H(x,y)\geq n-k$ is similar.

\end{proof}
The main result of this section is to prove an almost sharp lower bound on $\srank(\GHD_{k}^n)$ for any choice of $k < n/2$.
\strongbound*


\subsection{Weaker bound via CW-complex of the hypercube}\label{section:weaker}
As discussed in \Cref{subsection:gap_hamming_intro}, to obtain a lower bound on the sign-rank, it suffices to get a lower bound for the coindex. We prove the weaker bound as a warm up.
\begin{theorem}[Weak bound]
    \label{thm:weak_lower_GHD}
    For any $k < n/2$, $\coind(S(\GHD_{k}^n))\geq k$.
\end{theorem}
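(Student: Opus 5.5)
The plan is to find a highly connected free $\Z_2$-space that maps equivariantly into $|S(\GHD_k^n)|$, and then combine \Cref{prop:coindex_vs_connectivity} with the monotonicity of coindex under equivariant maps.

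\textbf{The source space.} Take $X$ to be the $k$-skeleton of the cubical CW-structure on $\partial[0,1]^n$. Its cells are the faces of the cube of dimension at most $k$. The antipodal map $x\mapsto \mathbf{1}-x$ acts on the vertices by complementation $v\mapsto\bar v$. It sends cubical faces to cubical faces and fixes no face, so it is a free cellular involution. Since $\partial[0,1]^n\cong\Sp^{n-1}$, cellular approximation shows that its $k$-skeleton is $(k-1)$-connected whenever $k\le n-2$. This condition holds because $k<n/2$, except in trivial small cases, which I would check by hand. Then \Cref{prop:coindex_vs_connectivity} gives $\coind(X)\ge k$.

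\textbf{The vertex map.} The vertices of $S(\GHD_k^n)$ are $y^\pm$ for columns $y\in\{0,1\}^n$. A row $x$ gives the simplex
\[
\sigma_x^+=\{y^+: d_H(x,y)\le k\}\cup\{y^-: d_H(x,y)\ge n-k\}.
\]
Define $\psi$ on cube vertices by $\psi(v)=v^+$ if $v_1=0$ and $\psi(v)=(\bar v)^-$ if $v_1=1$. This is equivariant: $\psi(\bar v)$ is the antipode of $\psi(v)$ under $i^+\leftrightarrow i^-$.

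\textbf{Faces map to simplices.} Let $F$ be a cubical face of dimension at most $k$, and fix any vertex $x\in F$. Every vertex $v\in F$ satisfies $d_H(x,v)\le k$. If $v_1=0$, then $\psi(v)=v^+\in\sigma_x^+$. If $v_1=1$, then $d_H(x,\bar v)=n-d_H(x,v)\ge n-k$, so $\psi(v)=(\bar v)^-\in\sigma_x^+$. Hence $\psi(V(F))$ spans a simplex of $S(\GHD_k^n)$. The only worry is that two vertices of $F$ might map to the same element or to an antipodal pair; that cannot happen inside $\sigma_x^+$, which is a simplex of a free complex.

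\textbf{Extending to a continuous equivariant map.} I will pass to the barycentric subdivision of $X$. Its vertices are the faces $F$, and its simplices are chains $F_0\subsetneq\cdots\subsetneq F_j$; the involution still acts freely. Send the vertex $F$ to the barycenter of the simplex spanned by $\psi(V(F))$ in $|S(\GHD_k^n)|$, and extend affinely on each simplex of the subdivision. For a chain ending in $F_j$, all the image points lie in the simplex $\psi(V(F_j))$, so the map is well defined and continuous. It is equivariant because complementation commutes with $\psi$ and with taking barycenters. Then $\coind(S(\GHD_k^n))\ge\coind(X)\ge k$.

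The main obstacle is not any single hard estimate but getting this last extension step right. A cubical face is not a simplex, so the cell-to-simplex map has to be built carefully (here via barycentric subdivision), and one must confirm that equivariance and the simplex condition both survive that construction.
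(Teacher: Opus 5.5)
Your proposal is correct and follows essentially the paper's proof. Both use the same $(k-1)$-connected source $H_n^{(k)}$, and both rest on the same observation: every vertex of a face of dimension at most $k$ lies within distance $k$ of any fixed vertex $x$ of that face, so its complement lies within distance $k$ of $\bar x$, which puts the image of the whole face in a single simplex of $S(\GHD_k^n)$. The differences are only cosmetic. The paper uses the symmetric vertex assignment $y\mapsto \tfrac12 y^- + \tfrac12(\bar y)^+$ and extends over a symmetric triangulation of the cube faces with no new vertices, whereas your first-coordinate choice of representatives $\psi$ and your extension via the barycentric subdivision work equally well.
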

The main idea is to give a $\Z_2$-equivariant map from the $k$-skeleton of the hypercube, denoted by $H_n^{(k)}$, which is known to be $(k-1)$-connected, to our complex $S(\GHD_{k}^n)$. 
This approach is not able to improve the bound beyond $k$ because the $k$-skeleton is not $k$-connected. 

Before proving \Cref{thm:weak_lower_GHD}, we need to discuss connectivity of the skeleton of the hypercube.
\paragraph{The $k$-skeleton of the hypercube.}
Let $H_n$ denote the boundary of the  CW-complex realization of the hypercube graph $Q_n$.
Denote by $H_n^{(r)}$ the $r$-skeleton of $H_n$. For the background on CW-complexes the reader is referred to \cite{matouvsek2003using}. Informally, facets of $H_n^{(r)}$ are all the $r$-dimensional faces of the $n$-dimensional hypercube.
Formally, let $I \subseteq [n]$ with $|I| = r$, and let $a \in \{0,1\}^{[n]\setminus I}$.
The corresponding $r$-dimensional face of the cube $[0,1]^n$ is
\[
F_{I,a}
=
\{ x \in [0,1]^n \;:\; x_i = a_i \text{ for all } i \notin I \}.
\]
which is homeomorphic to $[0,1]^r$. Also its vertex set is 
\(
F_{I,a} \cap \{0,1\}^n.
\)
We need the following lemma (see, e.g., \cite[Sec.~11]{Bjorner1995}).
\begin{lemma}\label{lem:rskeleton_hypercube}
$H_n^{(r)}$ is $(r-1)$-connected.
\end{lemma}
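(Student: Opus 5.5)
The plan is to show that the $r$-skeleton of the cube, $H_n^{(r)}$, is homotopy equivalent to a wedge of $r$-spheres. A CW complex of dimension $r$ with that homotopy type is automatically $(r-1)$-connected.

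\textbf{Step 1.} The full boundary $H_n$ is homeomorphic to $\Sp^{n-1}$, since it is the boundary of the convex polytope $[0,1]^n$. Hence $H_n$ is $(n-2)$-connected. For $r \ge n-1$ the statement therefore holds directly, because $H_n^{(n-1)} = H_n$.

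\textbf{Step 2.} For $r < n-1$, I use cellular approximation. Every map $\Sp^j \to H_n^{(r)}$ with $j \le r-1$ extends over $B^{j+1}$ inside $H_n$, since $H_n$ is $(n-2)$-connected and $j+1 \le r \le n-2$. By cellular approximation relative to the boundary sphere, this extension can be homotoped, rel $\Sp^j$, into the $(j+1)$-skeleton. Since $j+1 \le r$, the $(j+1)$-skeleton lies in $H_n^{(r)}$. This gives the required extension inside $H_n^{(r)}$.

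The same argument covers the general fact: the $r$-skeleton of any $m$-connected CW complex is $\min(r-1,m)$-connected. Here I take $m = n-2 \ge r-1$.

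\textbf{Main obstacle.} The one point needing care is that the relevant space is the cubical CW structure on the boundary of a polytope, which is indeed $\Sp^{n-1}$. If one prefers to stay simplicial, the same argument runs on a triangulation. Alternatively, one can argue via shellability of the cube's face lattice, as in \cite{Bjorner1995}: rank-selected skeleta of shellable complexes are shellable, hence homotopy equivalent to wedges of top-dimensional spheres. Either route gives the lemma.
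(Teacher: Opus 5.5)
Your argument is correct, but it is not the paper's route. The paper gives no proof of its own; it cites Bj\"orner's survey (Sec.~11), where statements of this kind come from shellability. That is essentially your ``alternative'': polytope boundaries are shellable, skeleta of shellable complexes are shellable, and a shellable $r$-dimensional complex is homotopy equivalent to a wedge of $r$-spheres.

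Your main route uses only that $H_n=\partial[0,1]^n\cong\Sp^{n-1}$, together with the general principle that the $r$-skeleton of an $m$-connected CW complex is $\min(r-1,m)$-connected. This is more elementary and more general, since the only input about the cube is convexity. The shellability route buys the stronger wedge-of-spheres homotopy type. Your opening sentence promises that stronger conclusion, but Step~2 only proves connectivity, which is all the lemma needs.

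Two small repairs are needed.

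First, relative cellular approximation keeps $\Sp^j$ fixed only if the map is already cellular there. As written, homotoping the extension rel $\Sp^j$ ``into the $(j+1)$-skeleton'' is impossible whenever $g(\Sp^j)\not\subseteq H_n^{(j+1)}$, because a homotopy rel $\Sp^j$ keeps that image. There are two easy fixes:
\begin{itemize}
\item First homotope $g$ inside $H_n^{(r)}$ to a cellular map with image in $H_n^{(j)}$, then extend over the ball and apply the relative version.
\item Alternatively, use that the pair $(H_n,H_n^{(r)})$ is $r$-connected, and read off $\pi_j(H_n^{(r)})=0$ from the exact sequence $\pi_{j+1}(H_n,H_n^{(r)})\to\pi_j(H_n^{(r)})\to\pi_j(H_n)$.
\end{itemize}

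Second, Step~1 should say $r=n-1$, not $r\ge n-1$. For $r\ge n$ one has $H_n^{(r)}=H_n\cong\Sp^{n-1}$, which is not $(r-1)$-connected. The lemma is therefore implicitly for $r\le n-1$, which is the only range the paper uses.
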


\medskip
\begin{proof}[Proof of \Cref{thm:weak_lower_GHD}]
For $x\in\{0,1\}^n$ define the Hamming ball
\[
B(x,k) \coloneqq \{\, y\in\{0,1\}^n : d_H(x,y)\le k \,\}.
\]
Let $\bar x$ denote the bitwise complement of $x$.
The complex $S(\GHD_k^n)$ is defined on the vertex set
\(
V^- \sqcup V^+,
\)
where $V^-,V^+$ are two disjoint copies of $\{0,1\}^n$.
By definition of $\GHD_k^n$, its maximal simplices are precisely the joins
\[
B(x,k)^- * B(\bar x,k)^+,
\qquad x\in\{0,1\}^n.
\]

   We are going to construct a $\Z_2$-equivariant map $h$ from $|H_n^{(k)}|$ to $|S(\GHD_k^n)|$, where $|H_n^{(k)}|$ is equipped with a flip map $x \mapsto \overline{x}$. This will finish the proof, because by \Cref{lem:rskeleton_hypercube} $H_n^{(k)}$ is $(k-1)$-connected, hence 
   \[
\coind\left(S(\GHD_k^n)\right)
\;\ge\;
\coind\left(H_n^{(k)}\right) 
\;\ge\;
k,
\]
where the last inequality is via \Cref{prop:coindex_vs_connectivity}.
\smallskip

\emph{ The map $h$.}
Define a $\Z_2$-map
\[
h : H_n^{(k)} \longrightarrow |S(\GHD_k^n)|
\]
on vertices by
\[
h(y)\coloneqq \tfrac12\, y^- + \tfrac12\, (\bar y)^+ \in |V^- * V^+|, 
\qquad y\in\{0,1\}^n,
\]
and extend linearly over each face of $H_n^{(k)}$ using any symmetric triangulation without additional vertices.  
For every vertex $y$ we have
\[
h(\bar y)= \tfrac12\, (\bar y)^- + \tfrac12\, y^+ = \tau\bigl(\tfrac12\, y^- + \tfrac12\,(\bar y)^+\bigr)=\tau(h(y)),
\]
so $h$ is $\Z_2$-equivariant.

\smallskip

It remains to check that $h$ lands in $|S(\GHD_k^n)|$.
Let $F$ be any face of $H_n^{(k)}$.
Choose a vertex $x_0\in F$.
Since $F$ is a face of dimension at most $k$ in the cube,
there exists a set $J\subseteq[n]$ with $|J|\le k$
such that only coordinates in $J$ vary on $F$.
Hence for every vertex $y\in F$,
\[
d_H(x_0,y)\le k,
\]
so $y\in B(x_0,k)$ and similarly
$\bar y\in B(\bar x_0,k)$.
Therefore, for every vertex $y\in F$,
\[
h(y)\in \bigl|B(x_0,k)^- * B(\bar x_0,k)^+\bigr|,
\]
which is a simplex of $S(\GHD_k^n)$.
Because $h$ is linear on $F$, we obtain
\[
h(F)\subseteq \bigl|B(x_0,k)^- * B(\bar x_0,k)^+\bigr|
\subseteq |S(\GHD_k^n)|.
\]
Thus $h$ is a well-defined $\Z_2$-map $H_n^{(k)}\to |S(\GHD_k^n)|$.
\end{proof}

\subsection{Strong bound via Vietoris--Rips complex of the hypercube}\label{section:stronger}
In this section, we obtain an essentially sharp bound on the $\coind(S(\GHD_k^n))$. 
\strongboundGHD*
To prove this result, we work with the Vietoris--Rips complex of the hypercube graph. We give a brief description as follows. The reader is referred to~\cite{EdelsbrunnerHarer2010, Ghrist2014ElementaryAppliedTopology, ChazalMichel2021IntroductionTDA} 
for more details.
\begin{definition}[Vietoris--Rips complex of the hypercube.]
    For $r \in \mathbb{N}$, the Vietoris--Rips complex $\VR(Q_n,k)$ is the simplicial complex on vertex set $\{0,1\}^n$ whose simplices are all 
    subsets of Hamming diameter at most~$k$. Equivalently, for a subset $\sigma \subseteq \{0,1\}^n$,
\[
\sigma \in \VR(Q_n,k)
\quad \Longleftrightarrow \quad
\forall x,y \in \sigma, \; d_H(x,y) \le k.
\]
\end{definition}

It is easy to see that the map $h$, given in \Cref{thm:weak_lower_GHD}, gives a $\Z_2$-equivariant map from $\VR(Q_n,k)$ to $S(\GHD_k^n)$. Hence we immediately get the following lemma.
\begin{lemma}\label{lem:sign_index_VR_index}
    For any $k < \frac{n}{2}$, 
    \[\coind(S(\GHD_{k}^n))\geq \coind(\VR(Q_n,k)).\]
\end{lemma}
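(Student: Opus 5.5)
The plan is to reuse the map $h$ from the proof of \Cref{thm:weak_lower_GHD}, now defined on the vertices of $\VR(Q_n,k)$. The complement map $x\mapsto \bar x$ is a free involution on $\{0,1\}^n$, since $n\ge 1$ forces $\bar x\neq x$. It preserves Hamming distance, so it sends simplices of diameter at most $k$ to simplices of diameter at most $k$, and it therefore acts simplicially on $\VR(Q_n,k)$. This action is also free on simplices: if $\sigma=\bar\sigma$, then $\sigma$ contains both some $x$ and $\bar x$, and $d_H(x,\bar x)=n>k$ because $k<n/2$. Hence $\VR(Q_n,k)$ is a free $\Z_2$-complex.

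Next, I define $h$ on vertices by $h(y)=\tfrac12 y^-+\tfrac12(\bar y)^+$ and extend it affinely over each simplex of $\VR(Q_n,k)$. Since $\VR(Q_n,k)$ is already simplicial, no triangulation choice is needed. Equivariance on vertices, $h(\bar y)=\tau(h(y))$, is exactly the computation already done in \Cref{thm:weak_lower_GHD}. Because both $h$ and $\tau$ are affine on simplices, equivariance extends to all of $|\VR(Q_n,k)|$.

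The key check is that $h$ lands in $|S(\GHD_k^n)|$. Take a simplex $\sigma\in\VR(Q_n,k)$ and fix any vertex $x_0\in\sigma$. Every $y\in\sigma$ satisfies $d_H(x_0,y)\le k$, so $y\in B(x_0,k)$. Also $d_H(\bar x_0,\bar y)=d_H(x_0,y)\le k$, so $\bar y\in B(\bar x_0,k)$. Hence all vertex images $h(y)$ lie in $|B(x_0,k)^- * B(\bar x_0,k)^+|$. By the description of the facets of $S(\GHD_k^n)$ in the proof of \Cref{thm:weak_lower_GHD}, this is a simplex of $S(\GHD_k^n)$. It is a genuine simplex of the crosspolytope, since no vertex appears both as $z^-$ and $z^+$: that would require $d_H(x_0,z)\le k$ and $d_H(\bar x_0,z)\le k$, giving $n\le 2k$, a contradiction. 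By convexity, $h(|\sigma|)$ lies in that simplex. So $h$ is a continuous $\Z_2$-map $|\VR(Q_n,k)|\to|S(\GHD_k^n)|$.

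Finally, I compose maps. For any equivariant $f\colon\Sp^d\to|\VR(Q_n,k)|$, the composition $h\circ f$ is an equivariant map $\Sp^d\to|S(\GHD_k^n)|$, and the inequality follows. The only real subtlety is the freeness and the no-antipodal-pair check, and both reduce to $k<n/2$.
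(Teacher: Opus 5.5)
Your proposal is correct and follows essentially the same route as the paper. The paper also takes $h(y)=\tfrac12 y^-+\tfrac12(\bar y)^+$, extends it affinely over the simplices of $\VR(Q_n,k)$, and reuses the argument of \Cref{thm:weak_lower_GHD}: each simplex lies in some $B(x_0,k)$, so its image lies in the facet $B(x_0,k)^- * B(\bar x_0,k)^+$. Your explicit checks that the complement action is free and that $B(x_0,k)\cap B(\bar x_0,k)=\varnothing$ (both via $k<n/2$), and your final composition with an equivariant map from $\Sp^d$, are details the paper leaves implicit.
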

\begin{proof}

Define a $\Z_2$-map
\[
h \colon |\VR(Q_n,k)|\longrightarrow |S(\GHD_{k}^n)|
\]
on vertices by
\[
h(y)\coloneqq \tfrac12\, y^- + \tfrac12\, (\bar y)^+ \in |V^- * V^+|, 
\qquad y\in\{0,1\}^n,
\]    
extend it linearly on each face, the same argument as in \Cref{thm:weak_lower_GHD} implies that such extension still lies in $|S(\GHD_{k}^n)|$. 
\end{proof}
Therefore the goal of the rest of this section is to prove an essentially sharp lower bound on $\coind(\VR(Q_n,k))$, which could be of independent interest.
\VRZtwo*
Note that the bound in \Cref{lem:VRZ2} is essentially sharp:
\[\coind(\VR(Q_n,k))\leq \ind(\VR(Q_n,k))\leq \srank(\GHD_k^n)-1\leq 2k.\]

The key advantage of the $\VR(Q_n,k)$ is that it reflects
the Hamming metric on $\{0,1\}^n$ rather than only the cubical face structure as captured by $H_n^{(k)}$.
As $k$ increases, in particular when $k$ is  larger than $\frac{n}{2}$ by more than a standard deviation, $k$-neighborhood of each vertex starts to cover all but small portion of the  hypercube, which implies strong connectivity properties of $\VR(Q_n,k)$.
In particular, the following recent result of Bendersky and Grbic \cite{bendersky2023connectivity} quantifies this property.
\begin{theorem}[\cite{bendersky2023connectivity}, Theorem 2.3]
\label{thm:bendersky}
Let $\alpha_{n,k} = \frac{2^{n-1}}{\sum\limits_{i=k+1}^n\binom{n}{i}}$. Then $\VR(Q_n,k)$ is $(\alpha_{n,k}-2)$-connected.
\end{theorem}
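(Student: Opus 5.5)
The plan is to derive connectivity from a ``common neighbour'' property of $\VR(Q_n,k)$, using the nerve lemma (\Cref{lem:connectivity_nerve}) and induction. Write $\beta=\sum_{i=k+1}^n\binom{n}{i}$. This is the number of points at distance $>k$ from a fixed vertex, so $\alpha_{n,k}=2^{n-1}/\beta$. Since $\VR(Q_n,k)$ is a flag complex, I will prove a more general statement about induced subcomplexes $X_W$ on vertex sets $W\subseteq\{0,1\}^n$. Say $W$ has property $P(m)$ if every $m$ vertices of $W$ have a common neighbour in $W$, where a common neighbour is a vertex at distance $\le k$ from all of them, possibly one of them. The target is: $P(m)$ (suitably strengthened) implies that $X_W$ is $(m-2)$-connected. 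A union bound gives $P(m)$ for the whole cube whenever $m\beta<2^n$, since the $m$ forbidden sets each have size $\beta$. This already yields $(m-2)$-connectivity for $m\approx 2\alpha_{n,k}$, which is more room than the claimed $\alpha_{n,k}-2$. That slack is what I expect to spend on the off-by-one issue described below.

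The induction step covers $X_W$ by the closed stars $C_v=X_{W\cap N[v]}$ for $v\in W$, where $N[v]$ is the closed $k$-ball around $v$. Each $C_v$ is a cone with apex $v$, hence contractible. The intersection $C_{v_1}\cap\dots\cap C_{v_s}$ is the induced complex on $W'=W\cap\bigcap_i N[v_i]$. Two facts then need checking.
\begin{enumerate}
\item The nerve contains the full $(m-1)$-skeleton of the simplex on $W$, by $P(m)$, and is therefore $(m-2)$-connected.
\item The set $W'$ inherits a property of the form $P(m-s)$. Indeed, any $m-s$ vertices of $W'$, together with $v_1,\dots,v_s$, have a common neighbour $w$, and $w$ lies in $W'$.
\end{enumerate}
By induction, $X_{W'}$ is then $(m-s-2)$-connected, and \Cref{lem:connectivity_nerve} gives the conclusion for $X_W$.

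The main obstacle is the resulting off-by-one. \Cref{lem:connectivity_nerve}, applied with target connectivity $m-2$, requires $s$-fold intersections to be $(m-s-1)$-connected. The naive induction only supplies $(m-s-2)$. To absorb this I will make the inductive invariant quantitative rather than combinatorial: claim that $X_W$ is $(\lfloor |W|/(2\beta)\rfloor-2)$-connected for every $W$, or a similar size-based bound. Then use $|W'|\ge |W|-s\beta$ together with the factor-$2$ slack between $2^n$ and $2^{n-1}$ in $\alpha_{n,k}$ to gain the missing unit at each level. The cases $s=1$ are contractible stars and need no induction. If this bookkeeping fails, the fallback is to exploit antipodal symmetry: pair each $v$ with $\bar v$ to replace $2^n$ by $2^{n-1}$. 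Alternatively, discard the cover by stars and cover by maximal simplices, chosen so that the intersections are themselves cones.
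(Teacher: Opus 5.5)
The paper gives no proof of this statement; it quotes it from Bendersky--Grbi\'c. Judged on its own merits, your route works, but only in the size-based form, and your first paragraph overclaims.

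\textbf{The first-paragraph target is false.} The claim ``$P(m)$ implies $(m-2)$-connected'' fails. The boundary $\partial\Diamond_j$ (the clique complex of $K_{2j}$ minus a perfect matching) satisfies $P(2j-1)$ but is only $(j-2)$-connected. The case $j=2$ is $\VR(Q_2,1)$ itself: a hollow square in which any three vertices have a common closed neighbour, yet it is not simply connected. So the factor $2$ between $2^n$ and $2^{n-1}$ is not slack to spend on an off-by-one. The nerve-lemma induction consumes all of it, because $P(m)$ in general buys only about $(m/2-1)$-connectivity. Your fallbacks (antipodal pairing, covers by maximal simplices) are unnecessary.

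\textbf{Your size-based invariant does close.} Set $a=|W|/\beta$ and $c(W)=\lfloor a/2\rfloor-2$; take $\beta\ge1$, since otherwise the complex is a simplex.
\begin{itemize}
\item Fewer than $a$ stars always meet, so the nerve contains the full $(\lceil a\rceil-2)$-skeleton. It is therefore $(\lceil a\rceil-3)$-connected, which is at least $c(W)$ once $a\ge2$.
\item Single stars are cones.
\item For $s\ge2$ distinct centres, $|W'|\ge|W|-s\beta$ gives
\[
\Bigl\lfloor\frac{a-s}{2}\Bigr\rfloor-2\;\ge\;\Bigl\lfloor\frac a2\Bigr\rfloor-\Bigl\lceil\frac s2\Bigr\rceil-2\;\ge\;c(W)-s+1\qquad(s\ge2),
\]
since $\lceil s/2\rceil\le s-1$. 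This is exactly the $(c(W)-s+1)$-connectivity that \Cref{lem:connectivity_nerve} requires.
\end{itemize}
Induct on the target connectivity rather than on $|W|$; intersections need strictly less connectivity, so the case $W'=W$ causes no trouble. At $W=\{0,1\}^n$ this yields $(\lfloor\alpha_{n,k}\rfloor-2)$-connectivity, as claimed.

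\textbf{Comparison with the cited route.} The shape $\alpha_{n,k}=2^n/(2\beta)$ is that of the Aharoni--Berger--Ziv total-domination bound $\eta(I(H))\ge\tilde\gamma(H)/2$, where $\eta$ is connectivity $+2$ and $\tilde\gamma$ is the total domination number. Here $H$ is the $\beta$-regular graph joining points at distance $>k$, whose independence complex is $\VR(Q_n,k)$, and one combines the bound with $\tilde\gamma(H)\ge2^n/\beta$. Your $P(m)$ on $W$ says precisely that $\tilde\gamma(H[W])>m$. So your induction is in effect a self-contained proof of that bound in this special case, using only the nerve lemma the paper already states. The domination route is more general (any graph) but imports an outside theorem.
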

However, a limitation of the above is that the  value of $\alpha_{n,k}$ becomes large only when the upper
binomial tail $\sum_{i>k}\binom{n}{i}$ is small.
By   Chernoff bound, this happens precisely when
\[
k \ge \frac{n}{2} + \Theta\big(\sqrt{n\log n}\big).
\]
However, in \Cref{lem:VRZ2} we are working in the opposite regime where $k< \frac{n}{2}$ can be arbitrarily small and \Cref{thm:bendersky} does not directly give nontrivial
connectivity bounds for $\VR(Q_n,k)$ itself.
However, remember that we only need a lower bound on $\Z_2$-index of $\VR(Q_n,k)$. To do so,
instead of lower bounding the connectivity of $\VR(Q_n,k)$, we lower bound the connectivity of a carefully chosen $\Z_2$-equivariant subcomplex of $\VR(Q_n,k)$ which is sufficient to imply \Cref{lem:VRZ2}, since unlike connectivity, $\Z_2$-coindex is monotone. 
\begin{lemma}\label{lem:subcomplex_connected}
 Let $k<\frac n2$. Then there exists a $(1-o_k(1))2k$-connected
$\Z_2$-complex that admits a $\Z_2$-equivariant embedding into
$\VR(Q_n,k)$.
\end{lemma}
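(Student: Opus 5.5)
We will build the subcomplex $K\subseteq \VR(Q_n,k)$ as a union of Vietoris--Rips complexes of subcubes of dimension $t$, with $t$ chosen so that \Cref{thm:bendersky} already gives high connectivity. Fix $t\approx 2k-C\sqrt{k\log k}$ with $k\ge t/2+\Theta(\sqrt{t\log t})$. For each face $F_{I,a}$ of the $n$-cube with $|I|=t$, the vertex set $F_{I,a}\cap\{0,1\}^n$ with the Hamming metric is isometric to $Q_t$. Hence the induced subcomplex $C_{I,a}\coloneqq \VR(F_{I,a}\cap\{0,1\}^n,k)\cong\VR(Q_t,k)$ sits inside $\VR(Q_n,k)$. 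By Chernoff, $\sum_{i>k}\binom{t}{i}\le 2^{t}\,t^{-\Omega(C^2)}$, so $\alpha_{t,k}\ge t^{\Omega(C^2)}\ge 2k$ once $C$ is a large enough constant. \Cref{thm:bendersky} then makes each $C_{I,a}$ highly connected, in fact more than $2k$-connected. Set $K\coloneqq\bigcup_{|I|=t,a} C_{I,a}$. It is invariant under complementation $x\mapsto\bar x$, since $\overline{F_{I,a}}=F_{I,\bar a}$. The action is free because $d_H(x,\bar x)=n>k$ means no simplex contains an antipodal pair. So $K$ is a free $\Z_2$-subcomplex of $\VR(Q_n,k)$, and it remains to show $K$ is $(t-O(1))$-connected.

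For connectivity we apply \Cref{lem:connectivity_nerve} to the cover $\{C_{I,a}\}$. An intersection $C_{I_1,a_1}\cap\dots\cap C_{I_s,a_s}$ is the induced VR complex on the intersection of the faces, which is either empty or a face $F_{J,b}$ with $J=\bigcap I_i$. That intersection is $\VR(Q_{|J|},k)$, and we need it to be $(m-s+1)$-connected, where $m$ is the target connectivity. The difficulty is that when $|J|$ drops well below $2k$, \Cref{thm:bendersky} no longer applies. When $|J|\le k$, however, the complex is a full simplex, hence contractible. So the problem intersections are those with $k<|J|<2k-O(\sqrt{k\log k})$.

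To handle this, we use a second nerve argument, as the proof sketch suggests. We first compare $K$ with the union of the cube faces themselves, that is, the $t$-skeleton $H_n^{(t)}$. On faces of dimension $\le k$ the VR complex is a simplex, so each such face contributes contractibly. Concretely, we cover $K$ by the $C_{I,a}$ and prove by induction on dimension that the union of $\VR$ over all $j$-faces is homotopy-connected to the degree of the cubical $j$-skeleton. The base case is $j\le k$: the VR union is then covered by simplices indexed by faces, forming a good cover whose nerve is the nerve of the faces of $H_n^{(j)}$, which is $(j-1)$-connected by \Cref{lem:rskeleton_hypercube}. For intermediate $j$ we use \Cref{lem:connectivity_nerve}, with pieces of high connectivity ($\ge 2k$ for large $j$ via \Cref{thm:bendersky}) and nerve equal to that of the cubical face cover. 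The upshot is that $K$ is $\min(t-1,\ \text{Bendersky bound}-O(1))$-connected, i.e.\ $(1-O(\sqrt{\log k/k}))2k$-connected.

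The main obstacle is the middle range of intersection dimensions, where the pieces are neither simplices nor covered by \Cref{thm:bendersky}. First we would try exploiting the $s$-dependent slack in \Cref{lem:connectivity_nerve}: an intersection of $s$ distinct $t$-faces has dimension at least $t-O(s)$-ish only for small $s$, and for large $s$ the required connectivity $m-s+1$ becomes small. If that fails, we would shrink $t$ slightly and use the nerve of the cover by $t$-faces directly. That nerve is homotopy equivalent to $H_n^{(t)}$, which is $(t-1)$-connected by \Cref{lem:rskeleton_hypercube}. Finally, \Cref{prop:coindex_vs_connectivity} converts the connectivity of $K$ into the coindex bound for the embedding required in \Cref{lem:subcomplex_connected}.
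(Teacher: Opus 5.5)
There is a genuine gap, and it is exactly at the step you flag as the main obstacle. To apply \Cref{lem:connectivity_nerve} to the cover $\{C_{I,a}\}$, every nonempty intersection must be highly connected. These intersections are the complexes $\VR(Q_{t'},k)$ with $t'\le t$, and you never prove they are. Instead you assert that \Cref{thm:bendersky} ``no longer applies'' once the face dimension drops below $2k$, which leaves a problematic range $k<t'<t$.

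That assertion is backwards. With $k$ fixed, shrinking the cube only shrinks the upper binomial tail. Indeed $\alpha_{m,k}=\bigl(2\Pr[\mathrm{Bin}(m,\tfrac12)>k]\bigr)^{-1}$, and $\mathrm{Bin}(m+1,\tfrac12)$ stochastically dominates $\mathrm{Bin}(m,\tfrac12)$. Hence $\alpha_{t',k}\ge\alpha_{t,k}$ for all $k<t'\le t$. Equivalently, in the Chernoff estimate, $2k-t'\ge 2k-t$ and $(2k-t')^2/(2t')\ge(2k-t)^2/(2t)$. For $t'\le k$ the complex is a full simplex.

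This is the paper's \Cref{claim:large_r_connectivity}. With $t$ maximal such that $k>t/2+2\sqrt{t\log t}$, every $\VR(Q_{t'},k)$ with $t'\le t$ is $(t-1)$-connected. You already sketch the nerve computation: the nerve of $\{C_{I,a}\}$ equals the nerve of the cover of $H_n^{(t)}$ by its closed $t$-faces. That cover is good, so the nerve is $(t-1)$-connected by \Cref{lem:goodcover_nerve} and \Cref{lem:rskeleton_hypercube}. With both facts in hand, \Cref{lem:connectivity_nerve} applies directly with connectivity $t-1$. That is the whole proof; no induction on dimension and no $s$-dependent slack is needed.

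Without this monotonicity, none of your fallback strategies closes the gap, even on their own terms.
\begin{itemize}
\item The $s$-dependent slack cannot help. Already two $t$-faces can meet in a $(t-1)$-face: take $I'=(I\setminus\{i\})\cup\{j\}$ with $i\in I$, $j\notin I$, and $a,a'$ agreeing off $I\cup I'$. That intersection must be $(m-1)$-connected. For $n\ge 2t$, pairs of $t$-faces meet in faces of every dimension from $0$ to $t$, so your ``middle range'' already occurs at $s=2$.
\item Shrinking $t$ does not remove the range $(k,t)$.
\item The inductive step of your induction on $j$ needs the individual pieces $\VR(Q_j,k)$ to be highly connected for intermediate $j$. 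Your inductive hypothesis concerns unions over all $j'$-faces, which is not what \Cref{lem:connectivity_nerve} asks for. High connectivity of the individual pieces is precisely what you claimed was unavailable.
\end{itemize}
So, as written, the proposal does not establish the lemma. Once you insert the monotonicity of $\alpha_{t',k}$ in $t'$, your first approach goes through and coincides with the paper's argument.
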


\subsection{Proof of Lemma \ref{lem:subcomplex_connected}}
The main idea is to exploit Theorem~\ref{thm:bendersky}
by applying it not in dimension $n$, but inside carefully chosen
lower-dimensional faces of the cube.
Then gluing together these highly connected pieces via the nerve lemma produces a highly connected $\Z_2$-equivariant subcomplex
of $\VR(Q_n,k)$.  

We  need the following as a corollary of \Cref{thm:bendersky}.
\begin{claim}\label{claim:large_r_connectivity}
    Let $k\geq 5$ and $t\ge 2$ be chosen such that 
\[k > \frac{t}{2} + 2\sqrt{t\log t}.\]
Then for any $t'\leq t$, $\VR(Q_{t'},k)$ is $(t-1)$-connected.
\end{claim}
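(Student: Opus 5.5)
We plan to apply \Cref{thm:bendersky} to $\VR(Q_{t'},k)$ for each $t'\le t$ and show that $\alpha_{t',k}-2\ge t-1$, that is, $\alpha_{t',k}\ge t+1$. Since $\alpha_{t',k}$ measures how small the upper binomial tail beyond $k$ is, the whole task reduces to a Chernoff/Hoeffding estimate.

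\textbf{Reduction to $t'=t$.} First we dispose of the trivial cases. If $k\ge t'$, then every pair of vertices of $Q_{t'}$ is at distance at most $k$, so $\VR(Q_{t'},k)$ is a full simplex. It is therefore contractible and in particular $(t-1)$-connected. Otherwise $t'>k$, and in this case the tail
\[
\sum_{i>k}\binom{t'}{i}\big/2^{t'}
\]
is the probability that $\mathrm{Bin}(t',1/2)>k$. This probability is monotone nondecreasing in $t'$, since adding a coin can only increase the count. Hence $\alpha_{t',k}\ge \alpha_{t,k}$, and it suffices to treat $t'=t$.

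\textbf{Tail bound.} Hoeffding's inequality gives
\[
\Pr[\mathrm{Bin}(t,1/2)>k]\le \exp\!\bigl(-2(k-t/2)^2/t\bigr).
\]
Using $k-t/2>2\sqrt{t\log t}$, this is at most $\exp(-8\log t)=t^{-8}$. Since $\alpha_{t,k}=\tfrac12\cdot\bigl(\Pr[\mathrm{Bin}(t,1/2)>k]\bigr)^{-1}$, we get
\[
\alpha_{t,k}\ge t^8/2\ge t+1
\]
for all $t\ge 2$. By \Cref{thm:bendersky}, $\VR(Q_t,k)$ is then $(\alpha_{t,k}-2)$-connected, hence $(t-1)$-connected.

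\textbf{Expected obstacle.} The main thing to watch is the small-$t$ edge behavior, namely $t=2$ and $t=3$, with $\log$ taken as the natural logarithm. For $t\ge 2$ we have $t^8/2\ge 128\ge t+1$, so the estimate holds comfortably. The hypothesis $k\ge 5$ is presumably only there to guarantee that some such $t\ge2$ exists. If the tail happens to be empty (when $k\ge t$), the full-simplex case above covers it, so no division by zero occurs in $\alpha$.
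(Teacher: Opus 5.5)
Your proof is correct and follows the paper's argument: the same full-simplex case for $t'\le k$, and for $t'>k$ the same Chernoff/Hoeffding tail estimate, giving $\exp\bigl(-(2k-t)^2/(2t)\bigr)\le t^{-8}$, which you feed into Bendersky--Grbic to get $\alpha\ge t^8/2\ge t+1$. The only cosmetic difference is how you handle $t'<t$: you use the monotonicity of $\Pr[\mathrm{Bin}(t',1/2)>k]$ in $t'$, whereas the paper applies the bound at $t'$ directly and compares exponents via $t'\le t$ and $2k-t'\ge 2k-t$.
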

\begin{proof}
Fix $t'\le t$.

\smallskip

\emph{ Case 1: $t'\le k$.}
Then every subset of $\{0,1\}^{t'}$ has Hamming diameter at most $k$, hence
$\VR(Q_{t'},k)$ is the full simplex on $2^{t'}$ vertices. In particular,
$\VR(Q_{t'},k)$ is contractible.

\smallskip
\emph{ Case 2: $t'> k$.}
Since $t'\le t$ and $k>\frac{t}{2}+2\sqrt{t\log t}$, we also have
\[
2k-t' \;\ge\; 2k-t \;>\; 4\sqrt{t\log t}.
\]
Applying  Chernoff bound gives
\[
\sum_{i=k+1}^{t'} \binom{t'}{i}
\le
2^{t'} \exp\left(-\frac{(2k-t')^2}{2t'}\right).
\]
Using $t'\le t$ and $2r-t'\ge 2k-t$ we obtain  
\[
\frac{(2k-t')^2}{2t'}
\;\ge\;
\frac{(2k-t)^2}{2t}
\;>\;
\frac{(4\sqrt{t\log t})^2}{2t}
=
8\log t,
\]
and hence
\[
\sum_{i=k+1}^{t'} \binom{t'}{i}
\le
2^{t'} e^{-8\log t}
=
2^{t'} t^{-8}.
\]
Therefore
\[
\alpha_{t',k}
=
\frac{2^{t'-1}}{\sum_{i=k+1}^{t'} \binom{t'}{i}}
\;\ge\;
\frac{2^{t'-1}}{2^{t'} t^{-8}}
=
\frac12\, t^{8}.
\]
In particular, for $t\ge 2$ we have $\alpha_{t',k}\ge t+1$
(e.g.\ since $\tfrac12 t^8-(t+1)\ge 0$ for $t\ge 2$), and thus by
Theorem~\ref{thm:bendersky} the complex $\VR(Q_{t'},k)$ is
$(\alpha_{t',k}-2)$-connected, hence at least $(t-1)$-connected.
\end{proof}

Returning to the proof of \Cref{lem:subcomplex_connected}, let $t$ be maximal such that
\begin{equation}
\label{eq:t_choice}
k > \frac{ t}{2} + 2\sqrt{ t\log  t}.
\end{equation}
Maximality of $ t$ implies that
\[
k \le \frac{ t+1}{2} + 2\sqrt{( t+1)\log( t+1)}.
\]
Rearranging gives
\[
 t \geq 2k - 1-4\sqrt{2k\log (2k)}
\geq  (1-o_k(1))2k.
\]
\noindent
\emph{ The subcomplex $\VR^t(Q_n,k)$.} 
Let $\VR^{t}(Q_n,k)$ denote the subcomplex of $\VR(Q_n,k)$ consisting of those simplices
whose vertices
\begin{enumerate}
    \item have diameter at most $k$, and
  \item that subset lies in some $t$-dimensional face of the hypercube $H_n$.
\end{enumerate}
In other words, each simplex of $\VR^{t}(Q_n,k)$ is of the form
\(
\sigma \;=\; \tau \cap F_{I,a},
\)
where $\tau$ is a simplex of $\VR(Q_n,k)$ and
\[
F_{I,a}
\;=\;
\{\, x \in \{0,1\}^n \;:\; x_j = a_j \text{ for all } j \notin I \,\}
\]
for some index set $I \subseteq [n]$ with $|I|=t$ and some fixed
$a \in \{0,1\}^{[n]\setminus I}$.
Thus, $\VR^{t}(Q_n,k)$ is obtained by restricting attention to simplices supported
on $t$-faces of the hypercube. Furthermore, observe that $\VR^{t}(Q_n,k)$ is a $\Z_2$-free subcomplex of $\VR(Q_n,k)$.
Indeed, the antipodal action $x \mapsto \bar x$ on $Q_n$ preserves Hamming
distance and sends $t$-dimensional faces to $t$-dimensional faces, so it preserves the defining conditions of $\VR^{t}(Q_n,k)$.

\medskip
In the rest of the proof we show that $\VR^{t}(Q_n,k)$ is $(t-1)$-connected. 

\paragraph{Connectivity of $\VR^{t}(Q_n,k)$.}
Connectivity is established in the following steps.
\begin{enumerate}
    \item Cover $\VR^{t}(Q_n,k)$ with a suitable family $\mathcal{C}$ and elements of $\mathcal{C}$ and their nonempty intersections are  $(t-1)$-connected.
    \item Show that the nerve of the cover $\mathcal{C}$ itself is $(t-1)$-connected. 
    \item Apply the connectivity version of the nerve lemma to conclude that  $\VR^{t}(Q_n,k)$ is $(t-1)$-connected.
\end{enumerate}
\medskip
\noindent
\emph{ Step 1.}
For each $t$-dimensional face $F=F_{I,a}$ of the cube, define
\[
U(F)
\;:=\;
\VR(Q_n,k)\big|_{F}.
\]
Let
\[
\mathcal C
\;=\;
\{\, U(F) \;:\; F \text{ is a $t$-face of } Q_n \,\}.
\]
By definition of $\VR^{t}(Q_n,k)$, $\mathcal{C}$ forms a cover:
\[
\VR^{t}(Q_n,k)
=
\bigcup_{F} U(F).
\]
Each $F$ is canonically isomorphic to $\{0,1\}^t$ by restricting to the free coordinates of $F$, and under this
identification
\[
U(F) \cong \VR(Q_t,k).
\]
Now consider a finite intersection
\[
U(F_0)\cap\cdots\cap U(F_s).
\]
If it is nonempty, then
\[
F_0\cap\cdots\cap F_s
=
F'
\]
is a (possibly lower-dimensional) face of $Q_n$
of dimension $t'\le t$, and
\[
U(F_0)\cap\cdots\cap U(F_s)
=
\VR(Q_n,k)\big|_{F'}
\cong
\VR(Q_{t'},k).
\]
The last complex is $(t-1)$-connected by \Cref{claim:large_r_connectivity}

\medskip
\noindent
\emph{ Step 2.}
Here we show that the nerve  $\mathcal N=\Nrv(\mathcal C)$ is $(t-1)$-connected.
A finite collection $\{F_0,\dots,F_s\}$ spans a simplex in $\mathcal N$
iff
\[
U(F_0)\cap\cdots\cap U(F_s)\neq\varnothing,
\]
which by Step~1 holds iff
\[
F_0\cap\cdots\cap F_s\neq\varnothing.
\]
which is true iff
\[
\bar F_0\cap\cdots\cap \bar F_s\neq\varnothing.
\]
where $\bar F\subseteq [0,1]^n$ is the face of the CW-complex of the geometric realization of hypercube $H_n$ with vertex set $F = \bar F\cap \{0,1\}^n$.
Define a cover of the $t$-skeleton $(H_n)^{(t)}$
\[
\mathcal C'
=
\{\, \bar F \;:\; \bar F \text{ is a $t$-face of } H_n \,\}.
\]
Since we know that $\Nrv(\mathcal C')$ is isomorphic to $\Nrv(\mathcal{C})$, it is enough to show that $\Nrv(\mathcal C')$ is $(t-1)$-connected. To show this, note that
\[
(H_n)^{(t)}
=
\bigcup_{\bar F\in\mathcal C'} \bar F.
\]
Each $\bar F$ is contractible, and any nonempty finite intersection
of $t$-faces is again a (lower-dimensional) face, hence contractible.
Thus $\mathcal C'$ is a \emph{good cover} of $(H_n)^{(t)}$, hence by the good cover nerve \Cref{lem:goodcover_nerve}
\[
\big|\Nrv(\mathcal C')\big|
\;\simeq\;
(H_n)^{(t)}.
\]
Finally by \Cref{lem:rskeleton_hypercube}, $(H_n)^{(t)}$, is $(t-1)$-connected.

\medskip
\noindent
\emph{ Step 3.}
We now apply the connectivity version of the Nerve Lemma, \Cref{lem:connectivity_nerve},
to the cover $\mathcal C$ of $\VR^{t}(Q_n,k)$.
We have established that 
\begin{itemize}
    \item From Step~1: 
every nonempty intersection of members
of $\mathcal C$ is $(t-1)$-connected.
\item From Step~2: the nerve $\Nrv(\mathcal C)$ is $(t-1)$-connected.
\end{itemize}
Therefore \Cref{lem:connectivity_nerve} implies that
\[
\VR^{t}(Q_n,k)
=
\bigcup_{\sigma\in\mathcal C} \sigma
\]
is $(t-1)$-connected.

\section{Basic properties of sign complex}\label{section:basic}
\subsection{Transposing the matrix}
Let $A\in\{-1,1,*\}^{M\times N}$ and write
\[
S(A)=\bigcup_{i=1}^M \bigl(K_i^+ \cup K_i^-\bigr),
\]
where $K_i^+$ and $K_i^-$ are the two the subcomplexes induced by row $i$.
Let $\mathcal N_A$ be the nerve of this cover.
Then we have the following.
\begin{claim}\label{clm:nerve_equals_transpose}
\(
\mathcal N_A \cong S(A^t).
\)
\end{claim}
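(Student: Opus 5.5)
The plan is to write down an explicit vertex bijection between the nerve $\mathcal N_A$ and $S(A^t)$ and check that it preserves simplices in both directions. The vertices of $\mathcal N_A$ are the $2M$ subcomplexes $K_i^+,K_i^-$, where $K_i^\pm$ is the full simplex on $\sigma_{r_i}^\pm$. The vertices of $S(A^t)$ are $\{1^+,1^-,\dots,M^+,M^-\}$, since the columns of $A^t$ are indexed by the rows of $A$. I will use the map $\Phi(K_i^+)=i^+$ and $\Phi(K_i^-)=i^-$. This map is a bijection, and it intertwines the involution $K_i^+\leftrightarrow K_i^-$ on the cover with the involution of $S(A^t)$. (In the degenerate case where some row $i$ of $A$ has no $\pm1$ entries, $K_i^\pm$ is empty. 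I will either note that this row gives an isolated vertex pair on both sides, or simply assume every row has a specified entry.)

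The core step is to describe the simplices of $\mathcal N_A$. A collection $\{K_{i}^{\epsilon_i}: i\in I\}$, with $I\subseteq[M]$ and signs $\epsilon_i\in\{\pm\}$, spans a simplex exactly when the full simplices on the sets $\sigma_{r_i}^{\epsilon_i}$ have a common point. Full simplices on vertex sets intersect exactly when the vertex sets intersect. So the condition is that there is a vertex $j^{\delta}$ of $S(A)$ lying in every $\sigma_{r_i}^{\epsilon_i}$.

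Now unwind the definition: $j^{+}\in\sigma_{r_i}^{\epsilon_i}$ iff $A_{ij}=\epsilon_i$ (reading $\epsilon_i$ as $\pm1$), and $j^-\in\sigma_{r_i}^{\epsilon_i}$ iff $A_{ij}=-\epsilon_i$. So the family spans a simplex iff there are a column $j$ and a sign $\delta$ with $A_{ij}=\delta\epsilon_i$ for all $i\in I$.

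Next I read the same condition in $S(A^t)$. Column $j$ of $A$ is a row $c_j$ of $A^t$. Its simplices $\sigma^{+}_{c_j}$ and $\sigma^-_{c_j}$ consist respectively of $\{i^{A_{ij}}\}$ and $\{i^{-A_{ij}}\}$, taken over the $i$ with $A_{ij}\neq *$. Hence $\{i^{\epsilon_i}: i\in I\}$ lies in $\sigma^{\delta}_{c_j}$ iff $A_{ij}=\delta\epsilon_i$ for all $i\in I$. This is precisely the nerve condition, so $\Phi$ maps simplices to simplices and so does its inverse.

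One point needs attention. A nerve simplex never contains both $K_i^+$ and $K_i^-$, because these are disjoint: $\sigma^+_{r_i}\cap\sigma^-_{r_i}=\varnothing$. So the index set $I$ really is a set of distinct rows, matching the freeness of $S(A^t)$. I expect the only real obstacle to be this bookkeeping with signs and empty rows. The rest is direct unwinding of the definitions.
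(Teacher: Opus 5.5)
Your proposal is correct and follows the paper's proof essentially step for step. It uses the same identification $K_i^\pm\mapsto i^\pm$, the same reduction of a nonempty intersection to a common vertex $j^{\delta}$ (a column $j$ with $A_{ij}=\delta\epsilon_i$ for all $i\in I$), and the same reading of this as $\{i^{\epsilon_i}\}\subseteq\sigma^{\delta}_{c_j}$ in $S(A^t)$. Tracking $\delta$ explicitly on the $S(A^t)$ side and noting $K_i^+\cap K_i^-=\varnothing$ are extra care beyond the paper. One correction: your general equivalence with $|I|=1$ already covers all-$*$ rows, which give no vertex pair on either side, not isolated ones.
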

\begin{proof}
  Vertices of the nerve correspond to the simplices $K_i^+$ and $K_i^-$, so we identify them with
$i^+$ and $i^-$ respectively. A face of $\mathcal N_A$ has the form $R^+ \sqcup T^-$ with $R,T\subseteq [M]$.
By definition of the nerve, this is a face iff
\begin{equation}\label{eq:intersectionnonempty}
\bigcap_{i\in R} K_i^+ \;\cap\; \bigcap_{j\in T} K_j^- \neq \emptyset.
\end{equation}
 A point lies in this intersection iff there exists a column $c\in[N]$ such that:
\[
A_{ic}=+1 \text{ for all } i\in R,
\qquad
A_{jc}=-1 \text{ for all } j\in T,
\]
or the same with all signs flipped (which corresponds to choosing $c^-$ instead of $c^+$).

Vertices of $S(A)$ are $c^+$ and $c^-$ for $c\in[N]$.
Fix $c\in[N]$. Then, by unpacking the definitions of $K_i^\pm$:
\[
c^+ \in K_i^+ \iff A_{ic}=+1,
\qquad
c^+ \in K_j^- \iff A_{jc}=-1,
\]
and similarly
\[
c^- \in K_i^+ \iff A_{ic}=-1,
\qquad
c^- \in K_j^- \iff A_{jc}=+1.
\]
Therefore the intersection in \eqref{eq:intersectionnonempty} is nonempty
if and only if there exists some column $c\in[N]$ such that either
\begin{equation}\label{eq:pattern_plus}
A_{ic}=+1 \ \forall i\in R
\quad\text{and}\quad
A_{jc}=-1 \ \forall j\in T,
\end{equation}
or the same condition with all signs flipped (corresponding to choosing $c^-$
instead of $c^+$).  

\medskip
Now consider $S(A^t)$.
Each column $c$ of $A$ becomes a row of $A^t$, producing a simplex whose positive and negative parts are
\[
\{i : A_{ic}=+1\}, \qquad \{i : A_{ic}=-1\}.
\]
Hence $R^+\sqcup T^-$ is a face of $S(A^t)$ iff there exists $c\in[N]$ such that
\[
A_{ic}=+1 \text{ for all } i\in R,
\qquad
A_{jc}=-1 \text{ for all } j\in T.
\]

\medskip
\noindent
This is exactly the same condition as above. Therefore
\[
R^+\sqcup T^- \in \mathcal N_A
\quad\Longleftrightarrow\quad
R   ^+\sqcup T^- \in S(A^t).
\]

\end{proof}



    

\subsection{VC dimension vs. $\Z_2$-coindex}
 In this section, we show that VC dimension of $A$ gives a lower bound on $\coind(A)$.
    Let $A\in\{-1,1,*\}^{M\times N}]$, such that for each pattern $P\in \{-1,1\}^S$, there is a row whose restriction to $S$ is $P$.

\begin{definition}
        For a free $\Z_2$-complex $K$, let $\omega_\Diamond(K)$ be the largest $k$ such that there is a copy of $\partial \Diamond_k$ as a free $\Z_2$-subcomplex of $K$. 
\end{definition}
It is easy to see that $\omega_\Diamond(S(A))$, is equal to the notion of \emph{dual sign-rank} which was introduced by Alon, Moran, and Yehudayoff \cite{alon2016sign}, as the largest number of columns that are `antipodally shattered'.

\begin{theorem}\label{thm:VCDiamond}
    We have
    \[ \frac{\omega_\Diamond(S(A))}{2} \leq \VC(A)\leq \omega_\Diamond(S(A)) \leq  \coind(S(A))+1.\]
\end{theorem}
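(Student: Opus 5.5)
We prove the three inequalities separately; each is a direct unpacking of the definitions of $S(A)$ and $\partial\Diamond_k$.

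\textbf{Middle inequality, $\VC(A)\le \omega_\Diamond(S(A))$.} Let $S\subseteq[N]$ with $|S|=d$ be shattered by the rows of $A$. Consider the induced subcomplex of $S(A)$ on the vertex set $\{j^+,j^-: j\in S\}$. A face of $\partial\Diamond_S$ is a set $\sigma$ containing at most one of $j^\pm$ for each $j\in S$. Extend $\sigma$ to a full sign pattern $P\in\{-1,1\}^S$, taking $P_j=+1$ if $j^+\in\sigma$, $P_j=-1$ if $j^-\in\sigma$, and choosing arbitrarily otherwise. By shattering, some row $r$ restricts to $P$ on $S$, so $\sigma\subseteq\sigma_r^+$. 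Hence every face of $\partial\Diamond_S$ lies in $S(A)$. Since $S(A)$ is free, no simplex contains both $j^+$ and $j^-$, so the induced subcomplex is exactly $\partial\Diamond_d$, and it is $\Z_2$-invariant. This gives $\omega_\Diamond\ge d$.

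\textbf{Last inequality, $\omega_\Diamond(S(A))\le\coind(S(A))+1$.} If $\partial\Diamond_k\subseteq S(A)$ is a free $\Z_2$-subcomplex, then $|\partial\Diamond_k|\cong\Sp^{k-1}$ equivariantly via radial projection in $\R^k$. Composing this with the inclusion gives an equivariant map $\Sp^{k-1}\to|S(A)|$, so $\coind(S(A))\ge k-1$.

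\textbf{First inequality, $\omega_\Diamond(S(A))\le 2\VC(A)$.} Suppose a copy of $\partial\Diamond_k$ sits in $S(A)$ as a free $\Z_2$-subcomplex on vertex pairs $\{w_i,\tau w_i\}$ for $i=1,\dots,k$. Each pair $\{w_i,\tau w_i\}$ is some $\{c_i^+,c_i^-\}$, and the columns $c_i$ are distinct. After relabelling, which amounts to negating columns and so does not affect VC dimension, we may assume $w_i=c_i^+$. Every sign pattern $P\in\{\pm1\}^k$ corresponds to a facet $\{c_i^{P_i}\}$ of $\partial\Diamond_k$. This facet lies in $\sigma_r^+$ or $\sigma_r^-$ for some row $r$, so row $r$ realizes $P$ or $-P$ on $C=\{c_1,\dots,c_k\}$. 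Thus $C$ is \emph{antipodally shattered}: for each $P$, at least one of $P,-P$ is realized.

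It remains to extract an honestly shattered subset of size $\ge k/2$, and this is the main obstacle. Fix any $T\subseteq C$ of size $\lceil k/2\rceil$ and try to show $T$ is shattered. Given a pattern $Q$ on $T$, the difficulty is that for each extension $P$ of $Q$ to $C$, the realized pattern may be $-P$ rather than $P$. Two routes are available. First, a counting/Sauer--Shelah argument: the realized trace on $C$ has size $\ge 2^{k-1}$, and Sauer--Shelah gives a shattered set of size $d$ as long as $\sum_{i\le d}\binom{k}{i}<2^{k-1}$. This yields only roughly $d\approx k/2 - O(\sqrt k)$. Second, a direct splitting $C=T\sqcup T'$: if some pattern $Q$ on $T$ is not realized, then every $P$ extending $Q$ must appear as $-P$, so $-Q\times\{\pm1\}^{T'}$ is fully realized and $T'$ is shattered. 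With $|T|=\lfloor k/2\rfloor$ and $|T'|=\lceil k/2\rceil$, either $T$ is shattered or $T'$ is, so $\VC(A)\ge\lfloor k/2\rfloor$. The exact rounding at odd $k$ needs to be checked.
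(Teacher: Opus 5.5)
Your arguments for $\VC(A)\le\omega_\Diamond(S(A))$ and $\omega_\Diamond(S(A))\le\coind(S(A))+1$ are the paper's. For the first inequality you take a different route. The paper applies Sauer--Shelah to the at least $2^{d-1}$ patterns realized on the antipodally shattered set. You instead split $C=T\sqcup T'$ and observe that if some $Q$ on $T$ is never realized, then all of $-Q\times\{\pm1\}^{T'}$ is realized, so $T'$ is shattered. This is correct and fully elementary, and it gives a slightly more structural fact: for every bipartition of $C$, one side is shattered.

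Your dismissal of the counting route is wrong, though. By the symmetry of binomial coefficients, $\sum_{i\le m}\binom{k}{i}<2^{k-1}$ holds for every $m<(k-1)/2$. Sauer--Shelah therefore gives a shattered set of size exactly $\lfloor k/2\rfloor$, with no $O(\sqrt k)$ loss, so the two routes yield the same bound.

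The point you left open, the rounding at odd $k$, cannot be closed: the stated inequality $\omega_\Diamond(S(A))/2\le\VC(A)$ is false whenever $\omega_\Diamond(S(A))$ is odd. Take $d=2m+1$ and let the rows of $A$ be all $P\in\{\pm1\}^d$ with at most $m$ entries equal to $+1$.
\begin{itemize}
\item Each antipodal pair $\{P,-P\}$ contains exactly one such vector, so $S(A)=\partial\Diamond_d$ and $\omega_\Diamond(S(A))=d$.
\item No $(m+1)$-set of columns is shattered, since the all-$+1$ pattern on it is missing. Hence $\VC(A)=m<d/2$.
\item For $d=3$ this is the $4\times 3$ matrix with rows $(-,-,-),(+,-,-),(-,+,-),(-,-,+)$, which has $\VC(A)=1<3/2$.
\end{itemize}

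The paper's proof has the matching off-by-one. For odd $d$, the hypothesis $\VC(A)<d/2$ still allows $\VC(A)=\lfloor d/2\rfloor$. The correct Sauer--Shelah bound is then $\sum_{i\le\lfloor d/2\rfloor}\binom{d}{i}=2^{d-1}$, which gives no contradiction. The paper's summation limit $\lfloor d/2\rfloor-1$ is only valid for even $d$.

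So what you actually proved, $\lfloor\omega_\Diamond(S(A))/2\rfloor\le\VC(A)$ (equivalently $\omega_\Diamond(S(A))\le 2\VC(A)+1$), is the correct and sharp form of the first inequality. It coincides with the stated one exactly when $\omega_\Diamond(S(A))$ is even. Rather than leaving the odd case as something to be checked, you should state this corrected form and cite the example above for sharpness.
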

\begin{proof}
First we show $\VC(A) \le \omega_\Diamond(S(A))$.
Let $S \subseteq [N]$ be shattered.
Consider the induced subcomplex of $S(A)$ on
\[
V_S := \{ j^+, j^- : j \in S \}.
\]
For each $P \in \{-1,1\}^S$, set
\[
F_P := \{ j^{P(j)} : j \in S \}.
\]
Since $S$ is shattered, for every $P$ there exists a row $i$ with
$M_{i|S} = P$, and hence $F_P \subseteq \sigma_i^+ \subseteq S(A)$.
Hence every choice of one vertex from each pair $\{j^+,j^-\}$ is a face of
the induced subcomplex on $V_S$ which is simplicially isomorphic to
$\partial \Diamond_{|S|}$ and is invariant under
$j^+ \leftrightarrow j^-$.

\medskip

Now we show $\omega_\Diamond(S(A)) \le 2\,\VC(A)$.
Suppose $S(A)$ contains a $\mathbb{Z}_2$-invariant
subcomplex $\partial \Diamond_d$.
Then there exists $S \subseteq [N]$, $|S|=d$, such that for every
$P \in \{-1,1\}^S$ the face
\[
F_P := \{ j^{P(j)} : j \in S \}
\]
lies in $S(A)$.
By construction of $S(A)$, for each $P$ there is a row whose restriction
to $S$ equals either $P$ or $-P$.
Hence at least one element from each antipodal pair $\{P,-P\}$ is realized,
so the number of distinct patterns on $S$ is at least $2^{d-1}$.
If $\VC(A) < d/2$, then by Sauer–Shelah lemma the number of patterns realizable
on $S$ is at most
\[
\sum_{i=0}^{\lfloor d/2 \rfloor - 1} \binom{d}{i}
< 2^{d-1},
\]
which is a contradiction.

Finally, to see $\omega_\Diamond(S(A)) \leq  \coind(S(A))+1$, let $\omega_\Diamond(S(A)) = d$ and note that $|\partial\Diamond_d|$ is $\mathbb Z_2$-homeomorphic to $\mathbb S^{d-1}$ with the antipodal action, hence 
\[
\coind(\partial\Diamond_d)=d-1
\]
and by monotonicity of coindex,
\[
d-1=\coind(\partial \Diamond_d) \leq \coind(S(A)).
\]

\end{proof}

\section{Separations}\label{section:separation}
The main goal of this section is to provide separation among parameters in the inequality
\[\VC(A)\leq \coind(S(A))\leq \ind(S(A))\leq \srank(A).\]
\subsection{Index vs. sign-rank for total matrices}\label{section:index-signrank-total-separation}
We work with two cases of total and partial matrices separately. 
For total matrices, we give a separation of $O(\log N)$-vs-$\Omega(N)$ by considering random matrices where a linear lower bound on sign-rank is known from the work of Alon et. al, \cite{alon1985geometrical}. For the well-known Hadamard matrix, we get a similar separation of $O(\log N)$-vs-$\Omega(\sqrt{N})$ based on the work of Forster \cite{forster2002linear}.

For partial matrices we obtain a stronger separation of $\ind(S(A))=1$ and $\srank(A)=\Omega(\sqrt{N})$ by considering a random labeling of the incidence matrix of the finite projection plane.

We develop a general technique to upper bound $\Z_2$-index, and apply it to two well-known families of matrices of high sign-rank: random matrices, and the Hadamard matrix.

Let $A\in\{\pm1,*\}^{M\times N}$ be a partial sign matrix with column set $[N]$.
For each row $i\in[M]$ define 
\[
R_i^+ \;:=\; \{\, j\in[N] : A_{ij}=+1 \,\},
\qquad
R_i^- \;:=\; \{\, j\in[N] : A_{ij}=-1 \,\}.
\]
Let
\[
\mathcal{R}(A)
\;:=\;
\{\, R_i^+,\ R_i^- : i\in[M] \,\}
\]
be the family of shore-parts of facets induced by $A$.
Define the \emph{intersection family} of $\mathcal{F}(A)$ by
\[
\mathcal{I}(A)
\;:=\;
\Bigl\{
\bigcap_{t=1}^k F_t \ :\ k\ge 1,\ F_t\in \mathcal{R}(A)
\Bigr\}
\setminus\{\emptyset\}.
\]
Let
\[
h(A)
\;:=\;
\max\Bigl\{
k:\ \exists S_1\subsetneq S_2\subsetneq\cdots\subsetneq S_k
\text{ with each }S_i\in \mathcal{I}(A)
\Bigr\}
\]
be the height (maximum strict chain length) of $\mathcal{I}(A)$ under inclusion.
The main result of this section is the following.
\begin{theorem}[Chain height bounds $\Z_2$-index]\label{thm:chainheight-index}
For any partial sign matrix $A$, \[\ind(S(A)) \le 2h(A)-1.\]
\end{theorem}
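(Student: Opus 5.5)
The plan is to build an explicit $\Z_2$-equivariant simplicial map from the barycentric subdivision $\sd(S(A))$ to the order complex of a small free $\Z_2$-poset whose chains have at most $2h(A)$ elements. I will then use the standard fact that a free $\Z_2$-simplicial complex of dimension $d$ has $\Z_2$-index at most $d$ \cite{matouvsek2003using}. Since index is monotone under equivariant maps, this gives $\ind(S(A))\le 2h(A)-1$.

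\emph{The poset.} For a nonempty set $P\subseteq[N]$ that is contained in some member of $\mathcal{R}(A)$, define its closure $\mathrm{cl}(P)$ to be the intersection of all members of $\mathcal{R}(A)$ containing $P$. Then $\mathrm{cl}(P)\in\mathcal{I}(A)$, and we set $\mathrm{cl}(\varnothing)=\varnothing$. Let $\Pi$ be the set of pairs $(X,Y)$ with $X,Y\in\mathcal{I}(A)\cup\{\varnothing\}$, $X\cap Y=\varnothing$, and $(X,Y)\neq(\varnothing,\varnothing)$. Order $\Pi$ componentwise by inclusion, and let the involution be $(X,Y)\mapsto(Y,X)$. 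This involution is order preserving. It is free, because disjointness together with not both sets being empty forces $X\neq Y$. Hence $\Delta(\Pi)$ is a free $\Z_2$-simplicial complex.

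\emph{The map.} A vertex of $\sd(S(A))$ is a nonempty simplex $\sigma=P^+\sqcup Q^-$ of $S(A)$, where $P,Q\subseteq[N]$ are disjoint. Since $\sigma$ lies in some $\sigma_r^{+}$ or $\sigma_r^{-}$, we have either $P\subseteq R_r^+$ and $Q\subseteq R_r^-$, or the reverse. Consequently $\mathrm{cl}(P)$ and $\mathrm{cl}(Q)$ are defined, and they lie inside the disjoint sets $R_r^\pm$, so they are disjoint. Set $\phi(\sigma)=(\mathrm{cl}(P),\mathrm{cl}(Q))\in\Pi$; it is not $(\varnothing,\varnothing)$ because $\sigma\neq\varnothing$. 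The closure is monotone, so $\phi$ is order preserving and sends chains to chains (possibly with repetitions). It therefore induces a simplicial map $\sd(S(A))\to\Delta(\Pi)$. The involution on $S(A)$ swaps $P^+\sqcup Q^-$ with $Q^+\sqcup P^-$, and $\phi$ turns this into the swap $(X,Y)\mapsto(Y,X)$, so the map is equivariant. Using $|\sd(S(A))|\cong|S(A)|$ equivariantly, we get $\ind(S(A))\le\ind(\Delta(\Pi))\le\dim\Delta(\Pi)$.

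\emph{Dimension count.} Consider a strict chain in $\Pi$. At each step at least one coordinate strictly increases. The first coordinates run through a weak chain in $\mathcal{I}(A)\cup\{\varnothing\}$, which takes at most $h(A)+1$ distinct values, so they increase at most $h(A)$ times; the same holds for the second coordinates. A strict chain in $\Pi$ therefore has at most $2h(A)+1$ elements. The bottom value $(\varnothing,\varnothing)$ is excluded from $\Pi$, which lowers the bound to $2h(A)$ elements, so $\dim\Delta(\Pi)\le 2h(A)-1$.

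The main point needing care is the well-definedness and freeness step. One must check that closures of $P$ and $Q$ coming from the same row stay disjoint, which is what makes $\Pi$ free, and that the closure is independent of the choice of row. Both follow because the closure is defined intrinsically from $\mathcal{R}(A)$ and is contained in each of $R_r^\pm$.
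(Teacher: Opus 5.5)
Your proof is correct and follows essentially the same route as the paper. Both arguments pass to $\sd(S(A))$, apply an order-preserving $\Z_2$-equivariant closure map into a free poset of pairs with coordinates in $\mathcal{I}(A)\cup\{\varnothing\}$, and bound $\ind$ by the dimension of the resulting order complex. The differences are minor: you close the positive and negative parts separately using $\mathcal{R}(A)$, whereas the paper intersects all facets containing the face; and your chain count correctly allows empty coordinates, giving at most $2h(A)$ elements per chain. The paper's intermediate bound $t\le 2h(A)-1$ glosses over that empty-coordinate case, although its final bound $\dim\le 2h(A)-1$ matches yours.
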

Later we apply the above to random matrices and the Hadamard matrix and show these matrices have logarithmic height. Before proving \Cref{thm:chainheight-index}, we need one lemma about $\Z_2$-equivariant maps between order complexes.
\begin{lemma}[A poset map induces a simplicial $\Z_2$-map on order complexes]
\label{lem:posetmapsimplicial}
Let \(P,Q\) be finite posets with \(\Z_2\)-actions $\tau$ and $\nu$ respectively. Let
\(\phi:P\to Q\) be order-preserving and \(\Z_2\)-equivariant. Then \(\phi\)
induces a simplicial \(\Z_2\)-equivariant map
\[
\phi:\Delta(P)\to \Delta(Q).
\]
\end{lemma}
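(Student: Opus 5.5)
The map is the obvious one: send each vertex $x\in P$ of $\Delta(P)$ to the vertex $\phi(x)\in Q$ of $\Delta(Q)$. I will check two things: that this vertex map is simplicial, and that it commutes with the induced involutions.

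For simpliciality, take a simplex of $\Delta(P)$, i.e.\ a chain $x_0<x_1<\cdots<x_k$ in $P$. Since $\phi$ is order-preserving, $\phi(x_0)\le\phi(x_1)\le\cdots\le\phi(x_k)$. This need not be strict, since $\phi$ may identify elements. However, the \emph{set} $\{\phi(x_0),\dots,\phi(x_k)\}$ is totally ordered in $Q$. Discarding repetitions turns it into a strict chain, so the image is a simplex of $\Delta(Q)$. Under the definition of simplicial map used in the paper (the image of every simplex is a simplex), this is all that is required, and degenerate collapses are allowed.

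For equivariance, the $\Z_2$-action on $\Delta(P)$ is the vertex map $\tau$. It is simplicial because $\tau$ is an order automorphism of $P$: it is order-preserving and equal to its own inverse, so it maps chains to chains. The same holds for $\nu$ on $\Delta(Q)$. Equivariance on vertices is exactly the hypothesis $\phi(\tau(x))=\nu(\phi(x))$, and simplicial maps are determined by their vertex maps. Hence $\phi\circ\tau=\nu\circ\phi$ as simplicial maps, and also on geometric realizations after extending affinely.

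The only point that needs any care is the non-injectivity of $\phi$ on chains. It is handled by noting that a weakly increasing sequence spans the same vertex set as a strict chain. If freeness of the actions is intended, it is inherited from $P$ and $Q$, but it is not needed for the statement itself.
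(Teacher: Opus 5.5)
Your proposal is correct and follows essentially the same route as the paper: you apply $\phi$ to vertices, and order-preservation sends a chain to a weakly increasing sequence, which becomes a strict chain once repetitions are deleted; equivariance then follows from $\phi\circ\tau=\nu\circ\phi$ on vertices. Your extra remarks make explicit two points the paper leaves implicit: that $\tau$ and $\nu$ act by order automorphisms, so the induced actions on $\Delta(P)$ and $\Delta(Q)$ are simplicial, and that freeness passes to the order complexes.
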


\begin{proof}
A simplex of \(\Delta(P)\) is a chain
\[
x_0<x_1<\cdots <x_t
\]
in \(P\). Since \(\phi\) is order-preserving,
\[
\phi(x_0)\le \phi(x_1)\le \cdots \le \phi(x_t).
\]
After deleting repetitions, this becomes a chain in \(Q\), hence a simplex of
\(\Delta(Q)\). Therefore \(\phi\) induces a simplicial map
\(\Delta(\phi):\Delta(P)\to \Delta(Q)\).

Since \(\phi\) is \(\Z_2\)-equivariant, for every \(x\in P\) we have
\[
\phi(\tau(x))=\nu(\phi(x)),
\]
and therefore the induced simplicial map commutes with the \(\Z_2\)-actions.
So \(\Delta(\phi)\) is a simplicial \(\Z_2\)-equivariant map.
\end{proof}

\begin{proof}[Proof of theorem 
\ref{thm:chainheight-index}]
Let $P$ be the poset of nonempty faces of $S(A)$, ordered by inclusion. Then the order complex $\Delta(P)$ coincides with $\sd(S(A))$, and since the barycentric subdivision preserves $\Z_2$-index, it is enough to show that 
\[\ind(\Delta(P))\leq 2h(A)-1.\]
Let $\mathcal{G}$ be the family of simplices:
\[
\mathcal{G} = \big\{R_i^+ * R_i^-\mid i \in [N]\big\} \cup \big\{R_i^- * R_i^+\mid i \in [M]\big\}.
\]
Define map $\phi \colon P \to P$ by
\[
\phi(X) := \bigcap_{\substack{G \in \mathcal{G}\\ X\subseteq G}} G,
\]
where $X$ is an arbitrary face of $S(A)$. We have the following observations:
\begin{itemize}
    \item We know that $\mathcal{G}$ generates all simpices of $S(A)$, so $\phi$ is well-defined. 
    \item Family $\mathcal{G}$ is symmetric under exchanging $i^-$ with $i^+$, so $\phi$ preserves symmetry as well.
    \item Map $\phi$ is order-preserving. Indeed, if $X\subseteq X'$ then
$\{G\in\mathcal{G}:X'\subseteq F\}\subseteq \{G\in\mathcal{G}:X\subseteq G\}$, hence
$\phi(X)\subseteq \phi(X')$.
\end{itemize}
These observations, combined with \Cref{lem:posetmapsimplicial} imply that $\phi$ induces a $\Z_2$ -equivariant map between corresponding order complexes.
\[\phi \colon \Delta(P) \to \Delta(\phi(P))\]
hence 
\[
 \ind(\Delta(P)) \le \ind(\Delta(\phi(P))).
 \]
 So it remains to bound \(\ind(\Delta(\phi(P)))\). Since \(\ind(K)\le \dim(K)\)
for every free \(\Z_2\)-complex \(K\), it suffices to bound the dimension of
\(\Delta(\phi(P))\).
A simplex of \(\Delta(\phi(P))\) is a strict chain
\[
X_1\subsetneq X_2\subsetneq \cdots \subsetneq X_t
\qquad (X_i\in \phi(P)).
\]
Each \(X_i\) has the form
\[
X_i=A_i^+\cup B_i^-,
\]
with \(A_i,B_i\in \mathcal I(A)\). Because the positive and negative vertices
are disjoint,
\[
X_i\subseteq X_{i+1}
\quad\Longleftrightarrow\quad
A_i\subseteq A_{i+1}\ \text{and}\ B_i\subseteq B_{i+1}.
\]
Moreover, since the chain is strict, at each step at least one of these two
inclusions is strict. Hence along the whole chain, the \(A_i\)'s can strictly
increase at most \(h(A)-1\) times, and the \(B_i\)'s can strictly increase at
most \(h(A)-1\) times. Hence
\[
t-1\le (h(A)-1)+(h(A)-1)=2h(A)-2,
\]
so
\[
t\le 2h(A)-1.
\]
Therefore every simplex of \(\Delta(\phi(P))\) has at most \(2h(A)\) vertices,
and so
\[
\dim(\Delta(\phi(P)))\le 2h(A)-1.
\]


\end{proof}

\paragraph{Application I: Random matrices.} Next we apply  \Cref{thm:chainheight-index} to random matrices, and show with high probability, random matrices have logarithmic height.

\begin{theorem}
\label{thm:random-height}
Let $A\in\{\pm1\}^{N\times N}$ be a random sign matrix whose entries are
independent and uniform.
Then there exists an absolute constant $C>0$ such that with probability $1-o(1):$
\[
h(A)\le C\log_2 N
\]
which in particular implies
\[
\ind(S(A)) \le 2C\log_2 N.
\]
\end{theorem}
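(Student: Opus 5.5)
The plan is to bound the height of $\mathcal{I}(A)$ by bounding the size of sets in long chains. Any strict chain $S_1\subsetneq\cdots\subsetneq S_k$ in $\mathcal{I}(A)$ has $|S_k|\ge |S_1|+k-1\ge k$. So it suffices to show the following: with high probability, every nonempty member of $\mathcal{I}(A)$ that is produced by intersecting many shores is small, and only a few shores can be intersected before the intersection becomes small or empty.

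The cleanest route is to control the length of chains through the number of generating shores. Every $S\in\mathcal{I}(A)$ can be written as $S=\bigcap_{i\in T}R_i^{\epsilon_i}$, where $T$ is the set of all rows $i$ whose shore (of the appropriate sign) contains $S$; in other words, $S$ is closed. Moving strictly down a chain, $S_{j}\supsetneq S_{j+1}$, the closure set of $S_{j+1}$ strictly contains that of $S_j$. I therefore plan to use a different parametrization: a closed set is determined by its column support. Going up the chain, each step adds at least one column, and the closure of the larger set is cut out by the rows that agree in a sign pattern on it.

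The key probabilistic claim is as follows. With probability $1-o(1)$, for every set $C$ of $c=\lceil 3\log_2 N\rceil$ columns and every sign pattern on $C$, at most $c$ rows realize that pattern; in fact, for most such $C$ no row realizes it. By a union bound, the expected number of pairs $(C,\text{pattern})$ realized by at least $2$ rows is at most $N^{c}2^{c}\binom{N}{2}2^{-2c}$. This is too large, so I instead fix the threshold as follows. Any $S\in\mathcal{I}(A)$ with $|S|\ge c$ has the property that all rows in its generating set $T$ agree in sign on a fixed $c$-subset. The expected number of (column $c$-set, row $m$-set) pairs whose rows agree in sign pattern on the columns is $\binom{N}{c}\binom{N}{m}2^{c}2^{-cm}$, and this is $o(1)$ for $c,m\ge C_0\log_2 N$. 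Hence w.h.p., every $S\in\mathcal{I}(A)$ with $|S|\ge c$ is an intersection of fewer than $m$ shores.

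Next I bound the chain. Split it into the part with $|S_j|<c$, which has length at most $c$, and the part with $|S_j|\ge c$. In the latter part each $S_j$ is the intersection of at most $m$ shores, and strict inclusion forces the closure sets $T_j$ to be strictly decreasing as $j$ increases. Since all $|T_j|<m$, that part has length at most $m$. Altogether $h(A)\le c+m=O(\log_2 N)$, and Theorem~\ref{thm:chainheight-index} then gives $\ind(S(A))\le 2h(A)-1\le 2C\log_2N$.

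The main obstacle is the second regime. I must justify that the maximal generating set $T$ of $S$ has size below $m$, which requires that $S$ is large. A subtlety is that shores have size about $N/2$, so intersections of few shores are huge; the union bound over column sets must use exactly $c$ columns inside $S$ rather than all of $S$. I will double-check that the exponent $-cm+c\log N+m\log N$ is negative for $c=m=3\log_2N$, which it is since $cm=9\log^2N$ dominates $6\log^2 N$.
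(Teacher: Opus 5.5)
Your argument is correct and is essentially the paper's: both rest on the fact that with high probability no $m\times c$ submatrix (with $m,c=\Theta(\log N)$) has every row constant, combined with the observation that along a strict chain the sets $S_j$ strictly grow while their closure row-sets $T_j$ strictly shrink (your union-bound factor should be $2^m$, one sign per row, rather than $2^c$, which is harmless since you take $c=m$). The only difference is cosmetic: the paper splits the chain by $|T_j|$ at threshold $\tfrac32\log_2 N$ (above it the cells have size at most $C_1\log N$), whereas you split by $|S_j|$ at threshold $c$, which is the same dichotomy seen from the other side; your explicit use of closures also justifies the paper's claim that $|T_j|$ strictly decreases.
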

\begin{proof}

For $T\subseteq[N]$ and a sign pattern $s\in\{\pm1\}^T$ define the
\emph{cell}
\[
C(T,s)
:=
\{\, j\in[N] : A_{ij}=s(i)\ \text{for all } i\in T \,\}.
\]
Every element of $\mathcal{I}(A)$ equals $C(T,s)$ for some $(T,s)$.
Moreover, if
\[
C(T',s') \subsetneq C(T,s),
\]
then necessarily $T\subsetneq T'$.
Hence along any strict chain in $\mathcal{I}(A)$,
the parameter $|T|$ strictly decreases at each step. 

By \Cref{lem:largeT-cells-small} (applied with $\varepsilon=\frac12$),
there exists an absolute constant $C_1>0$ such that with probability $1-o(1)$:
\begin{equation}\label{eq:largeT-small}
\forall\, T \subseteq [N]\ \text{with}\ |T|\ge \tfrac{3}{2}\log_2 N,\ 
\forall\, s\in\{\pm1\}^{T},\quad
|C(T,s)| \le C_1 \log N.
\end{equation}
Fix an outcome matrix $A$ where \Cref{eq:largeT-small} holds.
Now consider a strict chain in $\mathcal{I}(A)$,
\[
S_1 \subsetneq S_2 \subsetneq \cdots \subsetneq S_M,
\qquad\text{where } S_j=C(T_j,s_j).
\]
As observed above,
\[
|T_1|>|T_2|>\cdots>|T_M|.
\]
We split the chain indices $j\in[M]$ into two ranges according to $|T_j|$.

\medskip
\noindent\textbf{Range I: $|T_j|\ge \frac32\log N$.}
By \Cref{eq:largeT-small}, every $S_j$ in this range has size at most
$C_1\log N$. Since the chain of $S_j$'s is strict, their sizes must increase by at least $1$
each step; hence the number of indices $j$ in Range~I is at most $C_1\log N$.

\medskip
\noindent\textbf{Range II: $ |T_j| < \frac32\log N$.}
In this  range, we use the fact that the parameter $|T_j|$ decreases by at least $1$ each
step.
Hence the number of indices $j$ in Range~II is at most
$\frac32\log N$.
Overall we get that the length of the chain, $m$, is at most 
\[\left(\frac{3}{2}+C_1\right)\log N.\]
By choosing $C=\frac{3}{2}+C_1$ and applying \Cref{thm:chainheight-index} we get that $\ind(S(A))\leq 2h(A)-1\leq 2C\log N$.
\end{proof}

\begin{lemma}\label{lem:largeT-cells-small}
Fix $\varepsilon>0$. There exists a constant $C_1=C_1(\varepsilon)>0$ such that
for a random matrix $A\in\{\pm1\}^{N\times N}$ with independent uniform entries,
with probability $1-o(1)$, we have that simultaneously for all
$T\subseteq[N]$ satisfying \(|T|\ \ge\ (1+\varepsilon)\log N\) and all $s\in\{\pm1\}^T$:
\[
|C(T,s)| \ \le\ C_1\log N.
\]
\end{lemma}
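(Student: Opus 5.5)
The plan is a first-moment computation followed by a union bound over all rows $T$ with $|T| \ge (1+\eps)\log_2 N$, all patterns $s$, and all candidate column sets of size $L := \lceil C_1 \log N\rceil + 1$. Write $m := \lceil (1+\eps)\log_2 N\rceil$. The cells are monotone in $T$: if $T \subseteq T'$ and $s'$ restricts to $s$ on $T$, then $C(T',s') \subseteq C(T,s)$. So it suffices to treat $|T| = m$ exactly. Every larger cell is contained in a cell with $|T|=m$, obtained by restricting $s$ to any $m$-subset of $T$.

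\textbf{Step 1 (fixed $T,s$, fixed column set).} Fix $T$ with $|T|=m$, a pattern $s\in\{\pm1\}^T$, and a set $J\subseteq[N]$ of $L$ columns. The event $J\subseteq C(T,s)$ requires the $mL$ independent entries $A_{ij}$, $i\in T$, $j\in J$, to take prescribed values. Its probability is therefore exactly $2^{-mL}$.

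\textbf{Step 2 (union bound).} The probability that some cell with $|T|=m$ has size at least $L$ is at most
\[
\binom{N}{m}\, 2^{m}\, \binom{N}{L}\, 2^{-mL} \le N^{m}\, 2^{m}\, N^{L}\, 2^{-mL}.
\]
Taking $\log_2$, the exponent is at most
\[
m\log_2 N + m + L\log_2 N - mL.
\]
Since $m \ge (1+\eps)\log_2 N$, we have $mL \ge (1+\eps)L\log_2 N$. Hence $L\log_2 N - mL \le -\eps L \log_2 N$, and the exponent is at most
\[
m(\log_2 N + 1) - \eps L\log_2 N.
\]

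\textbf{Step 3 (choice of $C_1$).} Note $m(\log_2 N+1) = O((1+\eps)\log^2 N)$ while $\eps L\log_2 N \ge \eps C_1 \log N \log_2 N$. Choosing $C_1 = C_1(\eps)$ large, for instance $C_1 \ge 4(1+\eps)/\eps$ up to the $\ln$ versus $\log_2$ conversion, makes the exponent at most $-\Omega(\log^2 N)$. The failure probability is then $N^{-\Omega(\log N)} = o(1)$.

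\textbf{Step 4 (larger $T$).} On the complement event, every cell with $|T|=m$ has size at most $C_1\log N$. By the monotonicity observation, the same holds for every $T$ with $|T|\ge m$, and $m$ is exactly the threshold $(1+\eps)\log N$ after rounding.

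There is no genuine obstacle in this argument. The only care needed is the bookkeeping of logarithm bases and the integrality of $m$ and $L$. The key point is that the union over $T$ costs only $N^{m} = 2^{O(\log^2 N)}$, and this is beaten by the $2^{-\eps L \log_2 N}$ saving once $L = \Theta(\log N)$ with a large constant.
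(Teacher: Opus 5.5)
Your proposal is correct and is essentially the paper's argument: a first-moment union bound over pairs $(T,s)$ and $L$-subsets of columns, each contributing probability $2^{-|T|L}$, with $C_1$ of order $(1+\varepsilon)/\varepsilon$ so that the $2^{-|T|L}$ saving beats the $N^{O(\log N)}$ counting cost. The only difference is that you use monotonicity of cells to reduce to $|T|=\lceil(1+\varepsilon)\log_2 N\rceil$, whereas the paper sums the bound over all $t\ge(1+\varepsilon)\log_2 N$; this is a harmless simplification (also take $L=\lfloor C_1\log N\rfloor+1$, so that the complement event gives exactly $|C(T,s)|\le C_1\log N$ rather than $\lceil C_1\log N\rceil$).
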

\begin{proof}
    Fix $T\subseteq[N]$ with $|T|=t$ and $s\in\{\pm1\}^T$.
Then
\[
|C(T,s)| \sim \mathrm{Bin}(N,2^{-t}).
\]
Let $L:=C_1\log_2 N$ where $C_1$ is chosen later. By a union bound over $L$-subsets of columns,
\[
\Pr\big[|C(T,s)|\ge L\big]
\le \binom{N}{L}(2^{-t})^{L}
\le \left(\frac{eN}{L}\right)^L 2^{-tL}.
\]
Now union bound over all $(T,s)$ with $|T|=t$:
there are $\binom{N}{t}2^t$ such pairs, hence
\[
\Pr\Big[\exists\,T,s:|T|=t,\ |C(T,s)|\ge L\Big]
\le \binom{N}{t}2^t \left(\frac{eN}{L}\right)^L 2^{-tL}.
\]
Using $\binom{N}{t}\le (en/t)^t$ and $t\ge (1+\varepsilon)\log_2 n$, we get
\[
\Pr\Big[\exists\,T,s:|T|=t,\ |C(T,s)|\ge L\Big]
\le \left(\frac{2eN}{t}\right)^t \left(\frac{eN}{L}\right)^L 2^{-tL}.
\]
Summing this bound over all integers $t\ge (1+\varepsilon)\log_2 n$,
and choosing $C_1=C_1(\varepsilon)$ sufficiently large,
the factor $2^{-tL}$ dominates the the other factors,
giving total probability $o(1)$.
\end{proof}

\paragraph{Application II: The Hadamard matrix.}
Let $N=2^n$ and let $H=H_{N\times N}$ be the Hadamard matrix indexed by $\F_2^n$, defined by
\[
H_{x,y} := (-1)^{\langle x,y\rangle},\qquad x,y\in \F_2^n,
\]
where $\langle\cdot,\cdot\rangle$ is the standard dot product over $\F_2$.
It is known since Forster~\cite{forster2002linear} that
\[
\srank(H)\ge \sqrt{N} = 2^{n/2}.
\]
Let $K=S(H)$ be the sign complex associated with $H$.

\begin{lemma}\label{lem:hadamard-height}
    Let $H_{N\times N}$ be the Hadamard matrix as above. Then $h(H) = n +1= \log N+1$.
\end{lemma}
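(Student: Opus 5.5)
The plan is to identify the intersection family $\mathcal{I}(H)$ explicitly as the family of all nonempty affine subspaces of $\F_2^n$. Its height is then governed by dimension.

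\emph{Step 1: describe $\mathcal{R}(H)$.} For a row $x\in\F_2^n$ we have
\[
R_x^+=\{y:\langle x,y\rangle=0\},\qquad R_x^-=\{y:\langle x,y\rangle=1\}.
\]
For $x=0$ this gives $R_0^+=\F_2^n$ and $R_0^-=\emptyset$. For $x\neq 0$ the sets $R_x^+$ and $R_x^-$ are the linear hyperplane $x^\perp$ and its nontrivial coset. Every affine hyperplane of $\F_2^n$ has the form $\{y:\langle x,y\rangle=b\}$ with $x\neq0$. Hence $\mathcal{R}(H)$ consists of all affine hyperplanes, the whole space, and $\emptyset$.

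\emph{Step 2: show $\mathcal{I}(H)$ is exactly the set of nonempty affine subspaces.} Any intersection of sets from $\mathcal{R}(H)$ is the solution set of a system of affine linear equations. So every nonempty such intersection is an affine subspace. Conversely, the full space $\F_2^n$ lies in $\mathcal{R}(H)$. An affine subspace of codimension $c\ge1$ can be written as $\{y:\langle x_i,y\rangle=b_i,\ i\le c\}$ with $x_1,\dots,x_c$ linearly independent, which is an intersection of $c$ members of $\mathcal{R}(H)$. The empty set is excluded by definition, so nothing else remains.

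\emph{Step 3: compute the height.} For the lower bound, take a flag of linear subspaces $\{0\}=V_0\subsetneq V_1\subsetneq\cdots\subsetneq V_n=\F_2^n$. Each $V_i$ is in $\mathcal{I}(H)$ by Step 2, so this is a strict chain of length $n+1$. For the upper bound, a strict inclusion of affine subspaces $S\subsetneq S'$ forces $\dim S<\dim S'$. This holds because $|S|=2^{\dim S}$, or alternatively because translating by a common point reduces to linear subspaces. Dimensions lie in $\{0,\dots,n\}$, so every strict chain has length at most $n+1$. Therefore $h(H)=n+1=\log_2 N+1$.

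The only step needing care is Step 2. There we must make sure that the whole space and all affine, not just linear, subspaces really occur, using the zero row and the $R_x^-$ cosets, and that $\emptyset$ is discarded. The rest is immediate.
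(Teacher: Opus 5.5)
Your proof is correct and follows the paper's argument: identify the members of $\mathcal{I}(H)$ as affine subspaces, use a flag of linear subspaces for the lower bound, and bound strict chains by dimension for the upper bound. Your Step 2 is slightly more careful than the paper. It checks explicitly that every nonempty affine subspace, including the members of the flag, actually lies in $\mathcal{I}(H)$, which the paper leaves implicit. It also correctly handles the zero row, which contributes $\F_2^n$ and $\emptyset$ rather than an affine hyperplane.
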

\begin{proof}
    For each $x\in\F_2^n$, the row $H_{x,\cdot}$ defines the partition
\[
R_x=\{\, y\in\F_2^n : \langle x,y\rangle=0 \,\},
\qquad
R_x^c=\{\, y\in\F_2^n : \langle x,y\rangle=1 \,\}.
\]
Thus every member of $\mathcal{F}(H)$ is an affine hyperplane in $\F_2^n$.

\emph{Lower bound $h(H)\ge n+1$.}
Let $e_1,\dots,e_n$  be the standard basis of $\F_2^n$.
The chain 
\[\{0\}\subsetneq \langle e_1\rangle\subsetneq \langle e_1,e_2\rangle\subsetneq \cdots \subsetneq \langle e_1,\cdots,e_n\rangle\] has length $n+1$.

\emph{Upper bound $h(H)\le n+1$.}
Any set in $\mathcal{I}(H)$ is an affine subspace of $\F_2^n$.
Along any strict inclusion chain of nonempty affine subspaces, the dimension drops by at least $1$ at each step,
so the chain length is at most $n+1$.
Thus $h(H)\le n+1$.
\end{proof}
Combining \Cref{lem:hadamard-height} and \Cref{thm:chainheight-index} gives the following theorem.
\begin{theorem}\label{theorem:HadamardZ2}
Let $H$ be the $N\times N$ Hadamard matrix with $N=2^n$. Then
\[
n-1 \leq \ind(S(H))\le 2n+1.
\]
\end{theorem}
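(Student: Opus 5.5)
The statement has two halves, and I would prove them separately.

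\emph{Upper bound.} This follows by combining earlier results. \Cref{thm:chainheight-index} gives $\ind(S(H))\le 2h(H)-1$. \Cref{lem:hadamard-height} gives $h(H)=n+1$. Together these yield $\ind(S(H))\le 2(n+1)-1=2n+1$. The only point to double-check is that $\mathcal{I}(H)$ really consists of affine subspaces of $\F_2^n$, as the lemma asserts. For $x\neq 0$, the sets $R_x^{\pm}$ are affine hyperplanes, so this holds there. For the zero row, $R_0^+=\F_2^n$ and $R_0^-=\emptyset$. Since $\emptyset$ is excluded from $\mathcal{I}$ and $\F_2^n$ is itself an affine subspace, nothing changes.

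\emph{Lower bound.} I would use the chain \eqref{eq:inequalitychain}, specifically $\VC(H)-1\le\coind(S(H))\le\ind(S(H))$, which comes from \Cref{prop:vccoindex} (equivalently \Cref{thm:VCDiamond}). It therefore suffices to show $\VC(H)\ge n$.

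To do this, take the column set $S=\{e_1,\dots,e_n\}\subseteq\F_2^n$. For any pattern $P\in\{\pm1\}^S$, choose the row $x\in\F_2^n$ with $x_i=1$ exactly when $P(e_i)=-1$. Then $H_{x,e_i}=(-1)^{x_i}=P(e_i)$, so $S$ is shattered and $\VC(H)\ge n$. This gives $\ind(S(H))\ge \coind(S(H))\ge n-1$.

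A sharper variant would show that the induced subcomplex on $\{e_i^\pm\}$ is a full $\partial\Diamond_n$, giving coindex $n-1$ directly. The VC route already suffices, however.

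I do not expect a real obstacle here. The only delicate point is the indexing convention in the upper bound, namely that the chain length counts sets rather than steps. This convention was fixed in \Cref{thm:chainheight-index}, and I would simply apply it as stated.
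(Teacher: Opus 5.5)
Your proposal is correct and follows the paper's proof. The upper bound is \Cref{thm:chainheight-index} combined with $h(H)=n+1$ from \Cref{lem:hadamard-height}, and the lower bound feeds $\VC(H)\ge n$ into \Cref{prop:vccoindex}. Your explicit shattering of $\{e_1,\dots,e_n\}$, and your check that the all-ones row (where $R_0^-=\emptyset$) does not disturb the affine-subspace description of $\mathcal{I}(H)$, fill in details the paper leaves as ``easy to see.''
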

For the lower bound, note that, it is easy to see that the VC dimension of $H_{2^n\times 2^n}$ is $n$. Hence by \Cref{prop:vccoindex}, $n-1$ is a lower bound on $\ind(K)$.

\subsection{Index vs. sign-rank for partial matrices}\label{section:index_signrank_separation_partial}
In this section, we show how to construct a partial matrix $A\in \{-1,1,*\}^{N\times N}$ such that 
\[
\ind(S(A)) \le 1
\qquad\text{but}\qquad
\srank(A) \ge \Omega\left( \sqrt N  \right).
\]
The construction is randomized and is based on the incidence matrix of the finite projective plane. The lower bound on sign-rank is based on a counting argument using Warren's theorem from real algebraic geometry. This is similar to the lower bound arguments in \cite{alon1985geometrical,ben1998localization}.

\begin{theorem}[Warren\cite{warren1968lower}] 
Let $f_1,\dots,f_t$ be real polynomials of degree at most $k$ in $m$ variables each, where $t\geq m$. Then the number of distinct sign patterns
\[
(\sign(f_1(x)),\dots,\sign(f_t(x))) \in \{-1,0,1\}^t
\]
is at most $(\frac{c k t}{m})^m$ for some absolute constant $c$.
\end{theorem}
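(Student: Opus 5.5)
Since this is Warren's classical theorem, I would reproduce the standard two-stage argument. First I bound the number of \emph{strict} sign patterns in $\{-1,1\}^t$ by the number of connected components of $\R^m\setminus\bigcup_i Z(f_i)$, where $Z(f_i)$ is the zero set of $f_i$. Then I reduce the general $\{-1,0,1\}$ case to the strict case by a perturbation trick.

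\emph{Reduction from $\{-1,0,1\}$ to strict patterns.} Let $x_1,\dots,x_P$ be points realizing all the distinct sign patterns. Choose $\eps>0$ smaller than every nonzero value $|f_i(x_p)|$, and consider the $2t$ polynomials $f_i-\eps$ and $f_i+\eps$. None of them vanishes at any $x_p$. The pair of signs of $f_i\pm\eps$ at $x_p$ recovers $\sign(f_i(x_p))$: it is $(+,+)$, $(-,+)$ or $(-,-)$ according as $\sign f_i(x_p)$ is $1$, $0$ or $-1$. Hence distinct patterns give distinct strict patterns of the $2t$ polynomials. Moreover each $x_p$ lies in a distinct connected component of the complement of their common zero set. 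It therefore suffices to show that the number of connected components of $\R^m\setminus\bigcup_{i\le t}Z(g_i)$, for $t\ge m$ polynomials $g_i$ of degree at most $k$, is at most $(ckt/m)^m$. Replacing $t$ by $2t$ only changes the constant $c$.

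\emph{Counting components.} I would follow Warren's approach.
\begin{enumerate}[label=(\roman*)]
\item Perturb the $g_i$ slightly to $g_i-\delta_i$ with generic small $\delta_i$. This does not decrease the number of components, since each chosen representative point stays in the complement and distinct components stay separated. Genericity (Sard) ensures that any $j$ of the hypersurfaces meet transversally, and that more than $m$ of them never meet.
\item Intersect with a large ball so that every component is bounded and meets the ball.
\item In each component, take a point of its closure minimizing $\|x-a\|^2$ for a generic point $a$ (or maximizing, for bounded components). Such an extremum is a critical point of the distance function restricted to some stratum $Z(g_{i_1})\cap\dots\cap Z(g_{i_j})$ with $j\le m$. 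Distinct components can share an extremum only boundedly often, by a local argument using transversality.
\item By Lagrange multipliers, the critical points on a given $j$-fold intersection are the solutions of a square polynomial system of degrees at most $k$, so Bézout's theorem (or the Milnor--Thom bound) counts them by $(ck)^m$.
\end{enumerate}
Summing over strata gives at most $\sum_{j\le m}\binom{t}{j}(ck)^m\le (et/m)^m(ck)^m$, which is the claimed bound after renaming $c$.

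\emph{Main obstacle.} The delicate step is (iii)--(iv). One must make the general-position perturbation rigorous (Sard's theorem applied to the $\delta_i$), ensure the critical points are isolated so that Bézout applies, and verify that every component is charged to at least one critical point. If this becomes heavy, I would instead invoke the Milnor--Thom bound on the sum of Betti numbers, roughly $k(2k-1)^{m-1}$, for each $j$-fold intersection, together with an inclusion--exclusion (Mayer--Vietoris) bound on the $0$-th Betti number of the complement. This yields the same $\sum_{j\le m}\binom{t}{j}(ck)^m$ estimate.
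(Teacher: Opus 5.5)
The paper does not prove this statement; it quotes it from Warren and uses it only with $k=2$. Your sketch therefore has to stand on its own. Your reduction from $\{-1,0,1\}$-patterns to strict patterns via $f_i\pm\eps$ is correct. It is exactly what is needed to pass from Warren's component bound to the zero-inclusive form stated here. One wording fix: the relevant set is the complement of the \emph{union} of the zero sets, not of their common zero set.

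\textbf{Step (i) rests on a false claim.} A one-sided perturbation $g_i\mapsto g_i-\delta_i$ can merge components. Take $m=t=1$ and $g(x)=x^2$. The complement of $Z(g)=\{0\}$ has two components. Yet for every small $\delta<0$ the polynomial $x^2-\delta$ has no real zero, so the representatives $\pm1$ land in the single component $\R$. With $g=-x^2$ the same happens for $\delta>0$, so fixing the sign of $\delta$ does not help. The fix is to track sign vectors rather than components. Your points $x_p$ have pairwise distinct strict sign vectors, and these are unchanged when $|\delta_i|<\min_p|g_i(x_p)|$. Hence the $x_p$ remain in distinct components after perturbing. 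Alternatively, use Warren's two-sided device $\{x:|g_i(x)|>\delta_i\ \forall i\}$. This set sits inside the original complement, meets each of its components, and is a union of components of $\R^m\setminus\bigcup_i Z(g_i\pm\delta_i)$.

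\textbf{Step (iv) does not deliver $(ck)^m$.} On $Z_S$ with $|S|=j$, the Lagrange system is $g_i(x)=0$ $(i\in S)$ together with $x-a=\sum_{i\in S}\lambda_i\nabla g_i(x)$. That is $m+j$ equations of degree at most $k$ in the $m+j$ unknowns $(x,\lambda)$. Classical B\'ezout therefore gives $k^{m+j}$, which can be as large as $k^{2m}$, and summing yields only $(ck^2t/m)^m$. To get linear dependence on $k$ you need the two-group (multihomogeneous) B\'ezout number. In $(x,\lambda)$, the constraints have bidegree $(k,0)$ and the multiplier equations have bidegree $(k-1,1)$. This gives at most $\binom{m}{j}k^j(k-1)^{m-j}\le 2^mk^m$ isolated solutions; genericity of $a$ makes the real critical points nondegenerate, hence isolated. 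Note also that Milnor--Thom bounds Betti numbers, not numbers of critical points, so it cannot simply replace B\'ezout in (iv).

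\textbf{Your fallback is the cleanest correct route}, once the missing link is made explicit. Take a large open ball $B$ containing the $x_p$, and set $X=\partial B\cup\bigcup_i\bigl(Z(g_i)\cap\bar B\bigr)$.
\begin{enumerate}
\item Alexander duality gives $b_0(\R^m\setminus X)=1+b_{m-1}(X)$.
\item The Mayer--Vietoris spectral sequence gives $b_{m-1}(X)\le\sum_{1\le|S|\le m}b_{m-|S|}(V_S)$, where $V_S$ ranges over intersections of the $t+1$ pieces.
\item Milnor--Thom bounds each term by $(ck)^m$.
\end{enumerate}
This route needs no perturbation or transversality at all. For the paper's own use ($k=2$) the power of $k$ is immaterial, but it matters for the theorem as stated.
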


\paragraph{Construction.}
Let $q$ be a prime power and let $N = q^2 + q + 1$. Let $\mathrm{PG}(2,q)$ denote the finite projective plane of order $q$. Recall that: (1) there are $N$ points and $N$ lines, (2) each line contains exactly $q+1$ points, and each point lies on exactly $q+1$ lines,
(3) every two distinct lines intersect in exactly one point.

Let $P \in \{0,1\}^{N\times N}$ be the point--line incidence matrix, where rows correspond to lines and columns to points:
\[
P_{\ell,p} =
\begin{cases}
1 & \text{if } p \in \ell,\\
0 & \text{otherwise.}
\end{cases}
\]

We define a random partial sign matrix $A\in \{-1,1,*\}^{N\times N}$ as follows:
\begin{itemize}
    \item if $P_{\ell,p} = 0$, set $A_{\ell,p} = *$,
    \item if $P_{\ell,p} = 1$, assign $A_{\ell,p} \in \{\pm1\}$ independently and uniformly at random.
\end{itemize}

\begin{proposition}\label{prop:indexsignrankpartialseparation}
    Let $A$ be the random matrix constructed above. Then with probability $1-o(1)$,
\[
\ind(S(A)) \le 1
\qquad\text{but}\qquad
\srank(A) \ge \Omega\left(\sqrt N\right).
\]

\end{proposition}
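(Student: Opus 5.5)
The statement has two independent halves, and I would prove them separately.

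\emph{Index bound.} The upper bound $\ind(S(A))\le 1$ is deterministic; it holds for every sign assignment. The facets of $S(A)$ are $\sigma_\ell^\pm$, one antipodal pair for each line $\ell$, and each is a simplex on $q+1$ vertices. Two facets coming from distinct lines $\ell\neq\ell'$ can only share vertices lying over the unique common point $p=\ell\cap\ell'$. Hence any intersection of two or more facets from distinct lines is either empty or a single vertex $p^\pm$. Also $\sigma_\ell^+\cap\sigma_\ell^-=\varnothing$, and three facets from distinct lines intersect only if the lines are concurrent.

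I would therefore build an equivariant map $|S(A)|\to\Sp^1$ by hand, or more cleanly via nerves. Cover $|S(A)|$ by the contractible facets. All nonempty intersections are simplices, so this is a good cover and, by \Cref{lem:goodcover_nerve}, the nerve is equivariantly homotopy equivalent to $|S(A)|$. Rather than rely on that, I would construct the map directly:
\begin{enumerate}
\item Send each vertex $p^+$ to a point $\theta_p\in\Sp^1$ and $p^-$ to $-\theta_p$.
\item Contract each facet $\sigma_\ell^+$ to a point $c_\ell\in\Sp^1$, with $c_\ell$ for $\sigma_\ell^-$ equal to $-c_\ell$.
\item Join each vertex image to $c_\ell$ by an arc inside an open half-circle.
\end{enumerate}
Concretely, I would map the barycentric subdivision, which is a graph-like structure after collapsing facets. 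The space $|S(A)|$ deformation retracts equivariantly onto the $1$-dimensional complex $G$ with a vertex $b_\sigma$ for each facet barycenter, a vertex for each point-vertex, and an edge between $b_\sigma$ and each vertex of $\sigma$. This retraction collapses each simplex to the cone from its barycenter over its vertices; it is legitimate because facets pairwise meet only in vertices, so each facet is attached to the rest only along its vertex set. So $\ind(S(A))\le\ind(G)\le\dim G=1$, since a free $\Z_2$ graph maps equivariantly to $\Sp^1$. I would double-check the retraction carefully; it is the only subtle point in this half.

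\emph{Sign-rank bound.} I would use a counting argument via Warren's theorem. Suppose $\srank(A)\le d$. The positions $(\ell,p)$ with $p\in\ell$ number $t=N(q+1)$, and the sign pattern of $A$ on them is realized by the polynomials $\langle u_\ell,v_p\rangle$. These are degree-$2$ polynomials in $m=2Nd$ variables. By Warren's theorem, the number of realizable patterns on these positions is at most $(c\cdot 2\cdot N(q+1)/(2Nd))^{2Nd}=(c(q+1)/d)^{2Nd}$. There are $2^{N(q+1)}$ equally likely random assignments. The probability that the random $A$ has sign-rank $\le d$ is thus at most $(c(q+1)/d)^{2Nd}\,2^{-N(q+1)}$. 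Taking $d=\delta q$ for a small constant $\delta$, this is $2^{N(2\delta q\log_2(c/\delta)-q)}=o(1)$, since $2\delta\log_2(c/\delta)<1$ for small $\delta$. Since $q\approx\sqrt N$, this gives $\srank(A)=\Omega(\sqrt N)$ with probability $1-o(1)$.

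One caveat is Warren's condition $t\ge m$, i.e.\ $q+1\ge 2d$, which holds for $d=\delta q$. The main obstacle is making the index argument rigorous, namely the equivariant collapse to a graph. The counting half is routine.
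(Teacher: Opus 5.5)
Your proposal is correct and follows the paper's proof. The index bound comes from an equivariant map onto the $1$-dimensional vertex--facet incidence graph $G$. The paper builds it as the simplicial map $\sd(S(A))\to G$ that fixes vertices and sends each face of dimension at least one to its unique facet, which avoids the retraction you flagged; your facet-by-facet retraction, transported by $\tau$ and glued along the fixed vertices, also works. The sign-rank bound is the same Warren-theorem count with $d=\Theta(q)=\Theta(\sqrt N)$.
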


\begin{lemma}
\label{lem:ind-bound-by-facet-intersection}
    Let $K$ be a free $\Z_2$-complex where every pair of facets intersects in at most one vertex. Then $\ind(K) \le 1$.
\end{lemma}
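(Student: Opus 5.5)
The plan is to reuse the poset-map technique from the proof of \Cref{thm:chainheight-index}. We build a $\Z_2$-equivariant order-preserving map from the face poset of $K$ to a poset of height $2$. Its order complex is then a $1$-dimensional free $\Z_2$-complex, and we finish with $\ind(L)\le\dim(L)$, the fact already used in the proof of \Cref{thm:chainheight-index}.

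Let $P$ be the poset of nonempty faces of $K$, so that $\Delta(P)=\sd(K)$ and $\ind(K)=\ind(\Delta(P))$. Let $Q$ be the poset whose elements are the singletons $\{v\}$ for vertices $v$ of $K$ together with the facets of $K$, ordered by inclusion. Its only strict relations are $\{v\}\subsetneq F$ with $v\in F$, so every strict chain in $Q$ has at most two elements. Hence $\Delta(Q)$ is a graph, namely the vertex–facet incidence graph of $K$, and $\dim\Delta(Q)\le 1$.

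Define $\phi\colon P\to Q$ by $\phi(\{v\})=\{v\}$ and, for $|X|\ge 2$, $\phi(X)=F(X)$, the facet containing $X$. This facet is unique: two distinct facets containing $X$ would share at least two vertices, contradicting the hypothesis.
\begin{itemize}
\item \emph{Order preservation.} Let $X\subseteq X'$. If $X=\{v\}$, then $v\in X'\subseteq\phi(X')$. If $|X|\ge2$, then $F(X')\supseteq X'\supseteq X$, so uniqueness gives $F(X)=F(X')$.
\item \emph{Equivariance.} The involution $\tau$ maps facets to facets and preserves inclusion, so $\tau(F(X))$ is a facet containing $\tau(X)$. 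By uniqueness, $\tau(F(X))=F(\tau(X))$; on singletons equivariance is immediate.
\item \emph{Freeness on $Q$.} The action is free on vertices. For facets, $\tau(F)=F$ is impossible because the action on $K$ is free, so $\tau(\sigma)\ne\sigma$ for every simplex. Thus $\Delta(Q)$ is a free $\Z_2$-complex.
\end{itemize}

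By \Cref{lem:posetmapsimplicial}, $\phi$ induces a simplicial $\Z_2$-map $\sd(K)=\Delta(P)\to\Delta(Q)$. By monotonicity of the index under equivariant maps, $\ind(K)\le\ind(\Delta(Q))\le\dim\Delta(Q)\le 1$.

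The main point requiring care is well-definedness of $\phi$ on faces with at least two vertices, which is exactly where the hypothesis that facets pairwise meet in at most one vertex is used. A second point is freeness of the induced action on $\Delta(Q)$, which is needed so that the bound $\ind\le\dim$ applies.
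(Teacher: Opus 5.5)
Your proof is correct and is essentially the paper's argument: the paper uses the same map $\sd(K)\to G$ into the vertex--facet incidence graph, sending vertices to themselves and higher faces to their unique facet, and concludes $\ind(K)\le\ind(G)\le\dim(G)\le 1$; you only route the simpliciality check through \Cref{lem:posetmapsimplicial} instead of verifying it chain by chain. The one detail worth adding concerns edges of $\Delta(Q)$: freeness on elements of $Q$ already implies freeness on $\Delta(Q)$, because an order-preserving involution that fixes a chain setwise must fix it pointwise (the paper instead observes that the action respects the bipartition).
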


\begin{proof}
    Let $G$ be the vertex-facet incidence graph of~$K$. The $\Z_2$-action on~$K$ induces a $\Z_2$-action on~$G$, and this action on~$G$ is again free: Indeed, no face of $K$ is fixed by the freeness of the action, so no vertex of~$G$ is fixed. No edge is fixed either since the action respects the bipartition of~$G$.

    Every face~$\sigma$ of~$K$ of dimension at least one lies in a unique facet $F(\sigma)$ since facets intersect in at most one vertex. Define the simplicial map $f\colon \sd(K) \to G$ from the barycentric subdivision of~$K$ to~$G$ by $f(\sigma) = \sigma$ if $\sigma$ is a vertex and $f(\sigma) = F(\sigma)$ otherwise. To see that $f$ is a simplicial map let $\sigma_0 \subset \dots \subset \sigma_k$ be a chain of faces in~$K$. If $\sigma_0$ is not a vertex, then $f(\sigma_i) = F(\sigma_i) = F(\sigma_0)$, so the face $\{\sigma_0, \dots, \sigma_k\}$ of $\sd(K)$ is mapped to the vertex $F(\sigma_0)$ in~$G$. If $\sigma_0$ is a vertex and $k \ge 1$ (the case $k=0$ being trivial) then $F(\sigma_1) = \dots = F(\sigma_k)$ and the image of the face $\{\sigma_0, \dots, \sigma_k\}$ is the edge $\{\sigma_0, F(\sigma_1)\}$ of~$G$. 

    Clearly, the map $f$ is $\Z_2$-equivariant and $G$ has dimension one. Thus $\ind(K) \le \ind(G) \le \dim(G) \le 1$.
\end{proof}

\begin{proof}[Proof of Prop.~\ref{prop:indexsignrankpartialseparation}]
    The Index upper bound follows immediately from Lemma~\ref{lem:ind-bound-by-facet-intersection}.
For the sign-rank lower bound, fix $d\geq 1$. We bound the probability that 
\[
\Pr\left[\srank(A)\le d\right].
\]

Suppose $\srank(A)\le d$. Then there exist vectors $u_\ell, v_p \in \R^d$ such that for every $\{\pm 1\}$ entry $(\ell,p)$,
\[
A_{\ell,p} = \sign(\langle u_\ell, v_p\rangle).
\]

Hence $A_{\ell,p}$ is given by a degree-$2$ polynomial in the $2Nd$ variables $\{u_\ell\},\{v_p\}$. 
Let $t = N(q+1) = \Theta(N^{3/2})$ be the number of $\{\pm 1\}$ entries in $A$. 

By Warren's theorem, with $k = 2$ and $m = 2Nd$, the number of realizable sign patterns on the $\{\pm 1\}$ entries is at most
\[
\left(\frac{ckt}{m}\right)^m
=
\left(\frac{2ct}{2Nd}\right)^{2Nd}
=
\left(\frac{c(q+1)}{d}\right)^{2Nd}.
\]
provided $2Nd\leq t$.
On the other hand, the $\{\pm1\}$ entries of $A$ are independent and uniformly random. Hence
\[
\Pr[\srank(A)\le d] \le \frac{\left(\frac{c(q+1)}{d}\right)^{2Nd}}{2^t}
\le \exp\left( O\left(Nd \log\left(\frac{q}{d}\right)\right) - t \right).
\]
Since $t = \Theta(N^{3/2})$, this probability is $o(1)$ if $d=\gamma q$ for some small enough constant $\gamma$
\[
Nd \log\left(\frac{q}{d}\right)\leq N \gamma q \log\left(\frac{1}{\gamma}\right)  \ll t,
\]
Therefore, with high probability,
\[
\srank(A) \ge \Omega\left({\sqrt{N}}\right).
\]

\end{proof}

\subsection{Index vs. coindex}\label{section:indexcoindex_separation}
For the sake of completeness, we include the following separation between coindex and index.
The odd-dimensional real projective spaces $\RP^{2N-1}$ is the space of $1$-dimensional real subspaces of~$\mathbb C^N$, which carries a free $\Z/2$-action induced by multiplication with~$i$. Stolz~\cite{Stolz1989} determined the $\Z/2$-index of~$\RP^{2N-1}$ as $N+1$, $N+2$, or~$N+3$, depending on the remainder of $N$ modulo~$8$. The coindex of a $\Z_2$-space~$X$ space is bounded from above by the Stiefel--Whitney height, that is, the largest integer $N$ such that the $N$th power of the Stiefel--Whitney class of the double cover $X\to X/\Z_2$ is non-zero in mod-$2$ cohomology. The Stiefel--Whitney height of $\RP^{2N-1}$ with a free $\Z_2$-action is one; see Singh~\cite{Singh2011}.

On the other hand, Matsushita~\cite{Matsushita2017} constructs free $\Z_2$-spaces with coindex one and arbitrarily large Stiefel--Whitney height. The Stiefel--Whitney height in turn is a lower bound for the $\Z_2$-index. To summarize, denoting Stiefel--Whitney height of a $\Z_2$-space~$X$ by $\mathrm{swh}(X)$ all inequalities in
\[
    \coind(X) \le \mathrm{swh}(X) \le \ind(X)
\]
have arbitrarily large separation.


\subsection{VC dimension vs. coindex}\label{section:vccoindex_separation}
Here we prove the following stronger separation between VC dimension and coindex.
\VCcoindexsep*

Here we construct a simplicial complex $K$ on $2N$ vertices, such that $K$ does not contain any $\partial\Diamond_2$, yet the $\coind(K) = \Omega(\log^2 N)$. Then we can consider the partial matrix $A = A_K$ associated with $K$, and conclude that $\VC(A)=1$ but $\coind(S(A))=\Omega(\log^2 N)$.
The complex $K$ comes from state-of-the-art triangulations of the projective space with small number of vertices,  due to Adiprasito et. al.,\cite{adiprasito2022subexponential}.
\begin{theorem}[Efficient triangulations of $\RP^d$]
\label{thm:AAK-antipodal-spheres}
For every integer $d \ge 1$, there exists a triangulation of $\RP^d$ with $N$ vertices where 
    \[
    N \le \exp\bigl((\tfrac12 + o(1)) \sqrt{d \log d}\bigr).
    \]

\end{theorem}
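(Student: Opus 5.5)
This statement is a result of Adiprasito, Avvakumov and Karasev \cite{adiprasito2022subexponential}, and in the paper we would simply invoke it. To reprove it, the plan is to build a \emph{centrally symmetric} triangulation $T$ of $\Sp^d$ with about $2N$ vertices. It should satisfy the standard condition under which the antipodal quotient $T/\Z_2$ is again a simplicial complex: for every vertex $v$, the closed star of $v$ and the closed star of $-v$ are disjoint. The quotient is then a triangulation of $\RP^d$ with half as many vertices. So the work reduces to constructing a symmetric sphere with subexponentially many vertices in which antipodal vertices are at combinatorial distance at least $3$.

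The vertex count dictates the shape of the construction. Since
\[
\binom{m}{s}\approx \exp\bigl(\sqrt{d\log d}\,(\tfrac12+o(1))\bigr)
\quad\text{for } m\approx d,\ s\approx \tfrac12\sqrt{d/\log d},
\]
the vertices should be indexed by signed subsets of bounded size $s$ of a ground set of size about $d$. Concretely, I would realize $\Sp^d$ as the boundary of a symmetric convex body in $\R^{d+1}$, for example the cube or the crosspolytope after a change of coordinates. I would then take as candidate vertices the normalized $\{0,\pm1\}$-vectors with support of size at most $s$, in a basis adapted to a simplicial-type coordinate system such as the permutohedral or Kuhn triangulation. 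The simplices would be the cells of a regular (coherent) subdivision induced by a symmetric height function. Using a regular subdivision of a symmetric point configuration makes the result automatically a centrally symmetric PL sphere.

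The main obstacle is the central step. Vertices with support larger than $s$ must be avoided, and the coarse cells they would normally cover must still be triangulable without new vertices. The plan is to show that any point of $\Sp^d$ lies in the conical hull of a ``chain'' of small-support sign vectors, in the spirit of the order-complex or barycentric arguments used in \Cref{thm:chainheight-index}. This gives a sparse cover, and a nerve or shelling argument would then show that the resulting complex is a PL sphere.

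Finally, I would check the star-separation condition. Two vertices in a common simplex have nearly compatible sign patterns. Two vertices at distance $2$ can differ in at most $O(s)$ coordinates, whereas $v$ and $-v$ disagree in sign on every coordinate of their common support. Ensuring that supports are not too small near antipodal pairs, possibly by perturbing or doubling the ground set, should rule this out. Optimizing $s$ against $m$ then gives the constant $\tfrac12$ in the exponent.
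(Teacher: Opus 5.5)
\paragraph{Verdict: genuine gap.} Invoking \cite{adiprasito2022subexponential} is all the paper does here; it gives no proof of its own, so that part of your proposal matches. The reconstruction you sketch, however, does not work as written.

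\textbf{The covering step is false.} If ``chain'' means nested signed supports, as in an order complex, then every nonnegative combination of $v_1,\dots,v_r$ is supported on $\mathrm{supp}(v_r)$. These cones therefore reach only $s$-sparse directions, a lower-dimensional subset of $\Sp^d$. A top simplex needs $d+1$ linearly independent vertices whose supports jointly cover all $d+1$ coordinates, so it is never a chain of small supports.

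\textbf{Sphericity is not the real obstacle.} For any centrally symmetric point set on $\Sp^d$, pulling the vertices of its convex hull in the order $w_1,-w_1,w_2,-w_2,\dots$ gives a centrally symmetric triangulation of the boundary with no extra vertices, since no proper face contains both $w$ and $-w$. The whole difficulty is the distance-$\ge 3$ condition.

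\textbf{Your check of the distance condition is vacuous.} The vertices $v$ and $-v$ differ in only $|\mathrm{supp}(v)|\le s$ coordinates, so ``vertices at distance $2$ differ in $O(s)$ coordinates'' gives no contradiction. Moreover, any vertex whose support is disjoint from $\mathrm{supp}(v)$ is sign-compatible with both $v$ and $-v$. ``Perturbing or doubling the ground set'' is not an argument; this step is exactly what \cite{adiprasito2022subexponential} supplies.

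\textbf{The choice of $s$ is fitted to the target rather than derived, and the arithmetic is off.} With $s\approx\frac12\sqrt{d/\log d}$ one gets $\log\binom{d}{s}\approx\frac14\sqrt{d\log d}$, not $\frac12\sqrt{d\log d}$. More importantly, the natural mechanism behind your distance check is impossible once $s<\sqrt{d+1}$. That mechanism is: any two vertices of a common simplex have positive inner product, so no $w$ can be adjacent to both $v$ and $-v$. To see that it fails, write $\mathbf 1=(1,\dots,1)=\sum_j\lambda_j u_j$ with $\lambda_j>0$, where $u_j\in\{0,\pm1\}^{d+1}$ are the vertices of a simplex whose cone contains $\mathbf 1$. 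Suppose $\langle u_j,u_{j'}\rangle\ge 1$ for all $j,j'$. Then for each $j_0$ we have $s\ge\langle u_{j_0},\mathbf 1\rangle\ge\sum_j\lambda_j$, and hence $d+1=\sum_j\lambda_j\langle u_j,\mathbf 1\rangle\le s\sum_j\lambda_j\le s^2$.

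\textbf{Check the exponent you are citing.} Consistently with the bound above, as I recall \cite{adiprasito2022subexponential} proves $\exp\bigl((\tfrac12+o(1))\sqrt{d}\,\log d\bigr)$ vertices. This is the size of the family of signed sets of size about $\sqrt d$. The $\sqrt{d\log d}$ form in the displayed statement is stronger than that. If so, even the citation needs the exponent corrected, and the paper's later separation would become $d=\Omega(\log^2N/(\log\log N)^2)$.
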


The above  implies the following separation between $\mathrm{VC}$ and coindex.

\begin{proof}
Let $T$ be a triangulation of $\RP^d$ with $N$ vertices given by
\Cref{thm:AAK-antipodal-spheres}. Let
\[
\pi \colon \Sp^d \to \RP^d
\]
be the antipodal double cover. Lifting the triangulation $T$ along $\pi$
gives a simplicial complex $K$ together with a free involution
$\tau$ such that
\[
|K| \cong \Sp^d
\qquad\text{and}\qquad
|K|/\tau \cong |T| \cong \RP^d .
\]
Equivalently, $K$ is a free $\Z_2$--triangulation of $\Sp^d$ whose
quotient is~$T$. Since every vertex of $T$ has exactly two lifts, the complex
$K$ has exactly $2N$ vertices, arranged in $N$ antipodal pairs.

Now let $A:=A_{K}$ be the partial sign matrix associated to
$K$ by \Cref{lem:signcomp_universal}. Then
\[
S(A)\cong K.
\]

 \textbf{Coindex.}
Since $|K|$ is $\Z_2$--homeomorphic to $\Sp^d$ with the antipodal
action, we have
\[
\coind(S(A))
=
\coind(K)
=
\coind(\Sp^d)
=
d.
\]

\textbf{VC dimension.}
We claim that $K$ does not contain $\partial\Diamond_2$ as a free
$\Z_2$--subcomplex, in other words, 
\[
\omega_\Diamond(K)\le 1.
\]
Therefore, by the inequality proved earlier,
\[
\VC(A)\le \omega_\Diamond(S(A))=\omega_\Diamond(K)\le 1.
\]
Suppose $\partial\Diamond_2 \subseteq K$. Let
\[
v_1 \sim v_2 \sim v_3 \sim v_4 \sim v_1
\]
be the $4$-cycle with $\tau(v_i)=v_{i+2}$.
Set $x_i := \pi(v_i)\in T$ and let
\[
\gamma := (x_1,x_2,x_3,x_4,x_1)
\]
be the induced loop in $T$.

Since $\pi:|K|\to |T|$ is a $2$-fold cover,
the number of lifts of $\gamma$ is at most $2$ and each lift is determined by its starting point.
However, $\gamma$ gives four distinct lifts:
\[
(v_1,v_2,v_3,v_4,v_1 ),\quad
(v_3,v_2,v_1,v_4,v_3),\quad
(v_1,v_4,v_3,v_2,v_1),\quad
( v_3,v_4,v_1,v_2,v_3).
\]
Thus $\gamma$ admits $\ge 4$ lifts, a contradiction.
Hence $\partial\Diamond_2 \not\subseteq K$.
Finally, by \Cref{thm:AAK-antipodal-spheres},
\[
N \le \exp\bigl((\tfrac12+o(1))\sqrt{d\log d}\bigr).
\]
Taking logarithms and rearranging gives
\[
d=\Omega\left(\frac{\log^2 N}{\log\log N}\right)
\]
proving the theorem.
\end{proof}

\section{Concluding remarks and open questions}\label{section:conclusion}

Consider the chain of inequalities 

 \[   \coind(S(A))\leq \ind(S(A))\leq \ind^{\mathrm{lin}}(S(A))=\srank(A)-1.\]

A downside is that all of these parameters are very hard to compute for a given input matrix $A$. Indeed, it is known that checking whether sign-rank is 3 or not is complete for the existential theory of the reals \cite{mnev2006universality,basri2009visibility, bhangale2015complexity} which is a class including PSPACE. Also index and coindex are known to be hard \cite{matouvsek2003using}.

However, it is possible to insert more useful topological invariants between $\coind$ and $\ind$ that are efficiently computable. 
For example, the Stiefel--Whitney height of a $\Z_2$-complex~$K$, denoted by $\mathrm{swh}(K)$, also known as cohomological index satisfies  
\[\coind(K)\leq \mathrm{swh}(K)\leq \ind(K)\]
and we discussed in \Cref{section:indexcoindex_separation} that there are $K$ with $\coind(K)=1$ and arbitrarily large $\mathrm{swh}(K)$. It is known that $\mathrm{swh}(K)$ is efficiently computable in the number of faces of $K$ (see p. 106 of \cite{matouvsek2003using}) and therefore, it can be a useful computable lower bound on sign-rank of arbitrary matrices that is provably stronger than analytic lower bounds, as in the case of Gap Hamming Distance.

\paragraph{Upper bounds on $\Z_2$-index.}
We showed that in the case of $N$ by $N$ random matrices and the Hadamard matrix, $\Z_2$-index of the associated complex is at most $O(\log N)$.
We leave the following as a question, which if true, indicates that the sign complex is not able to prove a strong lower bound on sign-rank of square matrices, and one needs to construct significantly richer complexes than the sign complex to prove strong bounds.  We phrase the question more generally for strongly regular free $\Z_2$-CW complexes, which are a broader class than free $\Z_2$-simplicial complexes. In a strongly regular CW complex the intersection of any two faces is a (possibly empty) face. 
\begin{question}\label{question:z2upperbound}
Is it true that any finite strongly regular 
free $\mathbb Z_2$--CW-complex $K$ with $N$ vertices and $M$ inclusion maximal cells, satisfies 
\(
\ind(K)\le O(\,\log N\,\log M) ?
\)
\end{question}
 If true, it implies that for any $N$ by $N$ sign matrix $A$,  $\ind(S(A))\leq O(\log^2 N)$ since $S(A)$ has $2N$ vertices and at most $2N$ facets.

\Cref{question:z2upperbound} may be viewed as a topological generalization of the classical
Figiel--Lindenstrauss--Milman inequality for centrally symmetric polytopes
\cite{figiel1976dimension}.
\begin{theorem}[Figiel--Lindenstrauss--Milman theorem \cite{figiel1976dimension}]\label{thm:FLM}
There exists an absolute constant $c>0$ such that for every centrally symmetric
polytope $P\subset \mathbb R^d$ with $N$ vertices and $M$ facets,
\[
 d  \leq c\cdot \log N\cdot \log M
\]
\end{theorem}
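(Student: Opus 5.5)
The plan is to compare the dimensions of almost-Euclidean sections of $P$ and of its polar $P^\circ$. By Dvoretzky--Milman, the product of these two dimensions is at least linear in $d$. A counting argument then shows that such sections are expensive: they force exponentially many facets of $P$ and exponentially many vertices of $P$.

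Let $\|\cdot\|$ be the norm whose unit ball is $P$ (this is a norm since $P$ is centrally symmetric; we may assume $P$ is full-dimensional), and let $\|\cdot\|_*$ be the dual norm, whose unit ball is $P^\circ$. First apply a linear map so that $P$ is in John's position: the Euclidean ball $B$ is the maximal-volume ellipsoid in $P$, so $B\subseteq P\subseteq \sqrt d\,B$. Write
\[
M=\int_{\Sp^{d-1}}\|x\|\,d\sigma,\qquad M^*=\int_{\Sp^{d-1}}\|x\|_*\,d\sigma,
\]
and let $b=\max_{\Sp^{d-1}}\|x\|$ and $a=\max_{\Sp^{d-1}}\|x\|_*$. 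The inclusions give $b\le 1$ and $a\le\sqrt d$. By Cauchy--Schwarz, $MM^*\ge\bigl(\int (\|x\|\,\|x\|_*)^{1/2}\bigr)^2\ge 1$.

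Next, invoke the Dvoretzky--Milman theorem in Milman's quantitative form, which I will quote rather than reprove. It gives random sections $E\subseteq\R^d$ and $E'\subseteq\R^d$ with
\[
k=\dim E\ge c\,d\,(M/b)^2,\qquad k'=\dim E'\ge c\,d\,(M^*/a)^2,
\]
such that $\|\cdot\|$ is $2$-equivalent to a multiple of the Euclidean norm on $E$, and $\|\cdot\|_*$ is likewise on $E'$. Multiplying and using $MM^*\ge 1$ and $ab\le\sqrt d$ gives
\[
k\,k'\ \ge\ \frac{c^2d^2}{(ab)^2}\ \ge\ c^2 d.
\]

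It remains to show $\log M\ge c\,k$ and $\log N\ge c\,k'$, where $M$ and $N$ now denote the numbers of facets and vertices of $P$ as in the statement. For the first, $P\cap E$ is a polytope with at most $M$ facets that is $2$-close to a Euclidean ball in $\R^k$. Each facet hyperplane $\{\langle w,x\rangle=1\}$ of $P\cap E$ must contain a point of the inner ball. Hence the facet normals form a net on $\Sp^{k-1}$ at a constant angular scale: every direction $u$ has $\|u\|\approx$ const, so it is attained near some active normal. A volumetric cap-covering estimate then forces $M\ge e^{ck}$.

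For the second, dualize. The section $P^\circ\cap E'$ is polar (inside $E'$) to the orthogonal projection of $P$ onto $E'$, so that projection is a $2$-Euclidean polytope in $\R^{k'}$ with at most $N$ vertices. By the same cap-covering argument, $N\ge e^{ck'}$. Combining, $\log N\cdot\log M\ge c^2kk'\ge c'd$.

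The main obstacle is the Dvoretzky--Milman step: I need the precise dependence $k\gtrsim d(M/b)^2$, which comes from concentration of measure on the sphere for the Lipschitz function $\|\cdot\|$, together with a net argument. I would cite it as a black box. The only point needing care is that the same linear position (John's) is used for both $P$ and $P^\circ$, so that the single bound $ab\le\sqrt d$ controls both estimates at once.
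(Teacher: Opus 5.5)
The paper does not prove this statement: it is quoted from Figiel--Lindenstrauss--Milman and used only to motivate the open question on upper bounds for the $\Z_2$-index. Your outline is the classical FLM argument, and it is sound: John's position, $MM^*\ge 1$, and Milman's form of Dvoretzky applied to both $P$ and $P^\circ$ give $k\,k'\gtrsim d$, and counting finishes. However, one sentence in the facet count is wrong and should be replaced.

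\textbf{The facet count.} A facet hyperplane $\{\langle w,x\rangle=1\}$ of $P\cap E$ need not meet the inner ball. The fact you actually need is the reverse. If $rB_E\subseteq P\cap E\subseteq 2rB_E$, then every defining normal satisfies $|w_i|\le 1/r$, because the hyperplane cannot cut into $rB_E$. Combine this with $\max_i\langle w_i,u\rangle=\|u\|_{P\cap E}\ge 1/(2r)$ for every unit $u\in E$. This gives some $i$ with $\langle u,w_i/|w_i|\rangle\ge 1/2$. So the caps of angular radius $\pi/3$ around the normalized $w_i$ cover $\Sp^{k-1}$. Each such cap has measure at most $e^{-k/8}$, so $P$ has at least $e^{k/8}$ facets. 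You can run this with the projections $\pi_E w_i$ of all facet normals of $P$, so you need not track which of them become actual facets of the section.

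\textbf{Smaller points.} For the vertex bound, you can simply apply the same facet statement to the section $P^\circ\cap E'$, since $P^\circ$ has $N$ facets. This avoids the detour through the projection, although that detour is also correct. Finally, rename the mean $\int\|x\|\,d\sigma$, since $M$ already denotes the number of facets in the statement.
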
 
To see the connection, let \(K=\partial P\) be the boundary complex of a centrally
symmetric polytope \(P\subset \mathbb R^d\), equipped with the antipodal
\(\Z_2\)-action. Then \(K\) is a strongly regular \(\Z_2\)-CW complex with \(N\)
vertices and \(M\) facets, and
\[\coind(K)=\ind(K)=d-1.\]
Indeed radial projection from $\partial P$ to $\Sp^{d-1}$ gives a \(\Z_2\)-equivariant map
\(|K|=\partial P \to \mathbb S^{d-1}\).

Hence a positive answer to \Cref{question:z2upperbound} (even with $\coind$ instead of $\ind$) would recover \Cref{thm:FLM}. However, it is  significantly stronger, because boundary complexes of centrally
symmetric polytopes form a highly constrained subclass of strongly regular
free \(\Z_2\)-CW complexes with strong local structure.

\paragraph{Equivalence of index and sign-rank for total matrices.}
Regardless of the answer to \Cref{question:z2upperbound}, one may still hope for an equivalence between sign-rank and index. We have already showed that for partial matrices this is impossible, since in \Cref{prop:indexsignrankpartialseparation} we constructed a partial matrix with $\ind(S(A)) =1$ while $\srank(A)\geq \Omega(\sqrt{N})$. However, we only have an exponential separation in the case of total matrices and it is still possible that in this case, there is a qualitative equivalence between index and sign-rank. 

\TotalSeparationQuestion*

If the question above has a positive answer, a large number of fundamental problems in sign-rank literature can be entirely formulated in terms of $\Z_2$-index of free $\Z_2$-simplicial complexes. 
Some of these problems include bounding closure properties of sign-rank and equivalence to semi-algebraic complexity \cite{lovett2019notes,hatami2022lower}, the possibility of upper bounding sign-rank based on margin \cite{linial2007complexity,hatami2023borsuk}, and upper bounding sign-rank based on factorization norm \cite{hatami2022lower}.

\paragraph{Equivalence of coindex and VC dimension for total matrices.}
In the case of partial matrices, we showed how to obtain a strong separation between VC dimension and coindex, and we stated the following interesting question for total matrices which would imply learnability is a topological phenomenon for total concept classes.
\VCcoindexseptotal*
Here we discuss another implication a positive answer to the above question.
If the above question has a positive answer, combined with our lower bound on coindex of $\GHD_k^n$, it immediately implies the following, which is a strong form of a 
 major open problem on learnability of disambiguations of the concept class of half-spaces with margin
conjectured by Alon et. al., \cite{alon2022theory}
\begin{conjecture}\label{conj:disambig}
    There exists a function $g(k)\to\infty$ such that the following is true: for every completion of $\GHD_k^n$ to a total sign matrix $A$ by changing all the $*$ outputs of $\GHD_k^n$ arbitrarily to $-1$,$1$,
\[
\VC(A)\ge g(k).
\]
\end{conjecture}
Now we explain how \Cref{question:VC-coindex-separation-total} implies implies \Cref{conj:disambig} with the function 
\(g(k) = f^{-1}(k).\) Here we can assume $f$ is strictly increasing hence invertible.
Indeed, suppose the answer to \Cref{question:VC-coindex-separation-total} is positive and let $A$ be an arbitrary total completion of $\GHD_k^n$. Then $S(\GHD_k^n)$ is a $\Z_2$-subcomplex of $S(A)$. By combining \Cref{eq:coindbound} with monotonicity  of $\coind(\cdot)$, we have: 
\[
f(\VC(A))\geq \coind(S(A))\ge \coind\!\bigl(S(\GHD_k^n)\bigr)=k.
\]
The last inequality is using our weak bound on $\coind(S(\GHD_k^n))$.

In fact, in a  surprising result, recently Chornomaz, Moran, and Waknine \cite{chornomaz2025spherical} have defined a notion of \emph{spherical dimension},  as the $\mathbb Z_2$-coindex of a topological $\mathbb Z_2$-space associated with a total concept class on the sphere, and proved that bounding spherical dimension in terms of VC dimension is \emph{equivalent} to the corresponding disambiguation-type statement for half-spaces with margin.  It may be possible to similarly here show that \Cref{conj:disambig} implies \Cref{question:VC-coindex-separation-total}.

\bibliographystyle{alpha}
\bibliography{main}

@inproceedings{alon2016sign,
      title={Sign rank versus VC dimension},
      author={Alon, Noga and Moran, Shay and Yehudayoff, Amir},
      booktitle={Conference on Learning Theory},
      pages={47--80},
      year={2016},
      organization={PMLR}
    }

@inproceedings{hatami2023borsuk,
      title={A Borsuk-Ulam lower bound for sign-rank and its applications},
      author={Hatami, Hamed and Hosseini, Kaave and Meng, Xiang},
      booktitle={Proceedings of the 55th Annual ACM Symposium on Theory of Computing},
      pages={463--471},
      year={2023}
    }

@inproceedings{alon1985geometrical,
      title={Geometrical realization of set systems and probabilistic communication complexity},
      author={Alon, Noga and Frankl, Peter and Rodl, Vojtech},
      booktitle={26th Annual Symposium on Foundations of Computer Science (sfcs 1985)},
      pages={277--280},
      year={1985},
      organization={IEEE}
    }

@article{forster2002linear,
      title={A linear lower bound on the unbounded error probabilistic communication complexity},
      author={Forster, J{\"u}rgen},
      journal={Journal of Computer and System Sciences},
      volume={65},
      number={4},
      pages={612--625},
      year={2002},
      publisher={Elsevier}
    }

@inproceedings{alon2022theory,
      title={A theory of PAC learnability of partial concept classes},
      author={Alon, Noga and Hanneke, Steve and Holzman, Ron and Moran, Shay},
      booktitle={2021 IEEE 62nd Annual Symposium on Foundations of Computer Science (FOCS)},
      pages={658--671},
      year={2022},
      organization={IEEE}
    }

@article{adiprasito2022subexponential,
      title={A subexponential size triangulation of {$\mathbb{R}P^n$}},
      author={Adiprasito, Karim and Avvakumov, Sergey and Karasev, Roman},
      journal={Combinatorica},
      volume={42},
      number={1},
      pages={1--8},
      year={2022},
      publisher={Springer}
    }

@article{bendersky2023connectivity,
      title={On the Connectivity of the Vietoris-Rips Complex of a Hypercube Graph},
      author={Bendersky, Martin and Grbic, Jelena},
      journal={arXiv preprint arXiv:2311.06407},
      year={2023}
    }

@book{matouvsek2003using,
      title={Using the Borsuk-Ulam theorem: lectures on topological methods in combinatorics and geometry},
      author={Matou{\v{s}}ek, Ji{\v{r}}{\'\i}},
      year={2003},
      publisher={Springer}
    }

@book{kozlov2008combinatorial,
      title={Combinatorial algebraic topology},
      author={Kozlov, Dmitry},
      year={2008},
      publisher={Springer}
    }

@article{paturi1986probabilistic,
      title={Probabilistic communication complexity},
      author={Paturi, Ramamohan and Simon, Janos},
      journal={Journal of Computer and System Sciences},
      volume={33},
      number={1},
      pages={106--123},
      year={1986},
      publisher={Elsevier}
    }

@inproceedings{klivans2001learning,
      title={Learning DNF in time},
      author={Klivans, Adam R and Servedio, Rocco},
      booktitle={Proceedings of the thirty-third annual ACM symposium on Theory of computing},
      pages={258--265},
      year={2001}
    }

@article{sherstov2011unbounded,
      title={The unbounded-error communication complexity of symmetric functions},
      author={Sherstov, Alexander A},
      journal={Combinatorica},
      volume={31},
      number={5},
      pages={583--614},
      year={2011},
      publisher={Springer}
    }

@inproceedings{hatami2022lower,
      title={Lower bound methods for sign-rank and their limitations},
      author={Hatami, Hamed and Hatami, Pooya and Pires, William and Tao, Ran and Zhao, Rosie},
      booktitle={Approximation, Randomization, and Combinatorial Optimization. Algorithms and Techniques (APPROX/RANDOM 2022)},
      pages={22--1},
      year={2022},
      organization={Schloss Dagstuhl--Leibniz-Zentrum f{\"u}r Informatik}
    }

@article{razborov2010sign,
      title={The sign-rank of AC \^{}0},
      author={Razborov, Alexander A and Sherstov, Alexander A},
      journal={SIAM Journal on Computing},
      volume={39},
      number={5},
      pages={1833--1855},
      year={2010},
      publisher={SIAM}
    }

@article{goos2025sign,
      title={Sign-Rank of $ k $-Hamming Distance is Constant},
      author={G{\"o}{\"o}s, Mika and Harms, Nathaniel and Imbach, Valentin and Sokolov, Dmitry},
      journal={arXiv preprint arXiv:2506.12022},
      year={2025}
    }

@inproceedings{forster2001relations,
      title={Relations between communication complexity, linear arrangements, and computational complexity},
      author={Forster, J{\"u}rgen and Krause, Matthias and Lokam, Satyanarayana V and Mubarakzjanov, Rustam and Schmitt, Niels and Simon, Hans Ulrich},
      booktitle={International Conference on Foundations of Software Technology and Theoretical Computer Science},
      pages={171--182},
      year={2001},
      organization={Springer}
    }

@article{borsuk1933drei,
  title={Drei S{\"a}tze {\"u}ber die n-dimensionale euklidische Sph{\"a}re},
  author={Borsuk, Karol},
  journal={Fundamenta Mathematicae},
  volume={20},
  number={1},
  pages={177--190},
  year={1933},
  publisher={Polska Akademia Nauk. Instytut Matematyczny PAN}
}

@article{fadell1988ideal,
  title={An ideal-valued cohomological index theory with applications to Borsuk-Ulam and Bourgin-Yang theorems},
  author={Fadell, Edward and Husseini, Sufian},
  journal={Ergodic Theory Dynam. Systems},
  volume={8},
  number={Charles Conley Memorial Issue},
  pages={73--85},
  year={1988}
}

@article{forman1998morse,
  title={Morse theory for cell complexes},
  author={Forman, Robin},
  journal={Advances in mathematics},
  volume={134},
  number={1},
  pages={90--145},
  year={1998},
  publisher={Academic Press}
}

@article{lovasz1978kneser,
  title={Kneser's conjecture, chromatic number, and homotopy},
  author={Lov{\'a}sz, L{\'a}szl{\'o}},
  journal={Journal of Combinatorial Theory, Series A},
  volume={25},
  number={3},
  pages={319--324},
  year={1978},
  publisher={Elsevier}
}

@incollection{blagojevic2017beyond,
  title={Beyond the Borsuk--Ulam theorem: the topological Tverberg story},
  author={Blagojevi{\'c}, Pavle VM and Ziegler, G{\"u}nter M},
  booktitle={A Journey Through Discrete Mathematics: A Tribute to Ji{\v{r}}{\'\i} Matou{\v{s}}ek},
  pages={273--341},
  year={2017},
  publisher={Springer}
}

@article{linial2007complexity,
  title={Complexity measures of sign matrices},
  author={Linial, Nati and Mendelson, Shahar and Schechtman, Gideon and Shraibman, Adi},
  journal={Combinatorica},
  volume={27},
  number={4},
  pages={439--463},
  year={2007},
  publisher={Springer}
}

@article{chornomaz2025spherical,
  title={Spherical dimension},
  author={Chornomaz, Bogdan and Moran, Shay and Waknine, Tom},
  journal={arXiv preprint arXiv:2503.10240},
  year={2025}
}

@article{blondal2025borsuk,
  title={Borsuk-Ulam and replicable learning of large-margin halfspaces},
  author={Blondal, Ari and Hatami, Hamed and Hatami, Pooya and Lalov, Chavdar and Tretiak, Sivan},
  journal={arXiv preprint arXiv:2503.15294},
  year={2025}
}

@article{blondal2025simplicial,
  title={Simplicial covering dimension of extremal concept classes},
  author={Blondal, Ari and Hatami, Hamed and Hatami, Pooya and Lalov, Chavdar and Tretiak, Sivan},
  journal={arXiv preprint arXiv:2511.11819},
  year={2025}
}

@inproceedings{chase2024local,
  title={Local borsuk-ulam, stability, and replicability},
  author={Chase, Zachary and Chornomaz, Bogdan and Moran, Shay and Yehudayoff, Amir},
  booktitle={Proceedings of the 56th Annual ACM Symposium on Theory of Computing},
  pages={1769--1780},
  year={2024}
}

@book{de2013course,
  title={A course in topological combinatorics},
  author={De Longueville, Mark},
  year={2013},
  publisher={Springer Science \& Business Media}
}

@incollection{Bjorner1995,
  author    = {Anders Bj{\"o}rner},
  title     = {Topological Methods},
  booktitle = {Handbook of Combinatorics},
  editor    = {R. L. Graham and M. Gr{\"o}tschel and L. Lov{\'a}sz},
  volume    = {2},
  pages     = {1819--1872},
  publisher = {Elsevier},
  address   = {Amsterdam},
  year      = {1995}
}

@article{ben1998localization,
  title={Localization vs. identification of semi-algebraic sets},
  author={Ben-David, Shai and Lindenbaum, Michael},
  journal={Machine Learning},
  volume={32},
  number={3},
  pages={207--224},
  year={1998},
  publisher={Springer}
}

@article{warren1968lower,
  title={Lower bounds for approximation by nonlinear manifolds},
  author={Warren, Hugh E},
  journal={Transactions of the American Mathematical Society},
  volume={133},
  number={1},
  pages={167--178},
  year={1968},
  publisher={JSTOR}
}

@article{Stolz1989,
  author  = {Stephan Stolz},
  title   = {The level of real projective spaces},
  journal = {Commentarii Mathematici Helvetici},
  volume  = {64},
  number  = {4},
  pages   = {661--674},
  year    = {1989},
  doi     = {10.1007/BF02564700}
}

@article{Matsushita2017,
  author  = {Takahiro Matsushita},
  title   = {Some examples of non-tidy spaces},
  journal = {Mathematical Journal of Okayama University},
  volume  = {59},
  number  = {1},
  pages   = {21--25},
  year    = {2017}
}

@article{Singh2011,
  author  = {Mahender Singh},
  title   = {Parametrized Borsuk-Ulam problem for projective space bundles},
  journal = {Fundamenta Mathematicae},
  volume  = {211},
  number  = {2},
  pages   = {135--147},
  year    = {2011}
}

@article{figiel1976dimension,
  title={The dimension of almost spherical sections of convex bodies},
  author={Figiel, T and Lindenstrauss, J and Milman, VD},
  year={1976}
}

@article{forster2006smallest,
  title={On the smallest possible dimension and the largest possible margin of linear arrangements representing given concept classes},
  author={Forster, J{\"u}rgen and Simon, Hans Ulrich},
  journal={Theoretical Computer Science},
  volume={350},
  number={1},
  pages={40--48},
  year={2006},
  publisher={Elsevier}
}

@article{alon2005crossing,
  title={Crossing patterns of semi-algebraic sets},
  author={Alon, Noga and Pach, J{\'a}nos and Pinchasi, Rom and Radoi{\v{c}}i{\'c}, Rado{\v{s}} and Sharir, Micha},
  journal={Journal of Combinatorial Theory, Series A},
  volume={111},
  number={2},
  pages={310--326},
  year={2005},
  publisher={Elsevier}
}

@article{frankl1981short,
  title={A short proof for a theorem of Harper about Hamming-spheres},
  author={Frankl, Peter and F{\"u}redi, Zolt{\'a}n},
  journal={Discrete Mathematics},
  volume={34},
  number={3},
  pages={311--313},
  year={1981},
  publisher={North-Holland}
}

@inproceedings{papakonstantinou2014overlays,
  title={Overlays and limited memory communication},
  author={Papakonstantinou, Periklis and Scheder, Dominik and Song, Hao},
  booktitle={2014 IEEE 29th Conference on Computational Complexity (CCC)},
  pages={298--308},
  year={2014},
  organization={IEEE}
}

@inproceedings{chattopadhyay2019equality,
  title={Equality alone does not simulate randomness},
  author={Chattopadhyay, Arkadev and Lovett, Shachar and Vinyals, Marc},
  booktitle={34th Computational Complexity Conference (CCC 2019)},
  pages={14--1},
  year={2019},
  organization={Schloss Dagstuhl--Leibniz-Zentrum f{\"u}r Informatik}
}

@article{sherstov2012communication,
  title={The communication complexity of gap hamming distance},
  author={Sherstov, Alexander A},
  journal={Theory of Computing},
  volume={8},
  number={1},
  pages={197--208},
  year={2012},
  publisher={Theory of Computing Exchange}
}

@article{gavinsky2025unambiguous,
  title={Unambiguous Parity-Query Complexity},
  author={Gavinsky, Dmytro},
  journal={Random Structures \& Algorithms},
  volume={66},
  number={3},
  pages={e70010},
  year={2025},
  publisher={Wiley Online Library}
}

@inproceedings{chakrabarti2011optimal,
  title={An optimal lower bound on the communication complexity of gap-hamming-distance},
  author={Chakrabarti, Amit and Regev, Oded},
  booktitle={Proceedings of the forty-third annual ACM symposium on Theory of computing},
  pages={51--60},
  year={2011}
}

@inproceedings{indyk2003tight,
  title={Tight lower bounds for the distinct elements problem},
  author={Indyk, Piotr and Woodruff, David},
  booktitle={44th Annual IEEE Symposium on Foundations of Computer Science, 2003. Proceedings.},
  pages={283--288},
  year={2003},
  organization={IEEE}
}

@article{jayram2013optimal,
  title={Optimal bounds for Johnson-Lindenstrauss transforms and streaming problems with subconstant error},
  author={Jayram, Thathachar S and Woodruff, David P},
  journal={ACM Transactions on Algorithms (TALG)},
  volume={9},
  number={3},
  pages={1--17},
  year={2013},
  publisher={ACM New York, NY, USA}
}

@inproceedings{brody2010better,
  title={Better gap-hamming lower bounds via better round elimination},
  author={Brody, Joshua and Chakrabarti, Amit and Regev, Oded and Vidick, Thomas and De Wolf, Ronald},
  booktitle={International Workshop on Randomization and Approximation Techniques in Computer Science},
  pages={476--489},
  year={2010},
  organization={Springer}
}

@article{Vietoris1927,
  author  = {Vietoris, Leopold},
  title   = {{\"U}ber den h\"oheren {Z}usammenhang kompakter {R}\"aume und eine {K}lasse von zusammenhangstreuen {A}bbildungen},
  journal = {Mathematische Annalen},
  volume  = {97},
  pages   = {454--472},
  year    = {1927}
}

@incollection{Gromov1987,
  author    = {Gromov, Mikhail},
  title     = {Hyperbolic groups},
  booktitle = {Essays in Group Theory},
  editor    = {Gersten, Stephen M.},
  series    = {Mathematical Sciences Research Institute Publications},
  volume    = {8},
  pages     = {75--263},
  publisher = {Springer},
  address   = {New York},
  year      = {1987},
  doi       = {10.1007/978-1-4613-9586-7_3}
}

@book{EdelsbrunnerHarer2010,
  author    = {Edelsbrunner, Herbert and Harer, John L.},
  title     = {Computational Topology: An Introduction},
  publisher = {American Mathematical Society},
  address   = {Providence, RI},
  year      = {2010},
  doi       = {10.1090/mbk/069}
}

@article{Carlsson2009,
  author  = {Carlsson, Gunnar},
  title   = {Topology and data},
  journal = {Bulletin of the American Mathematical Society},
  volume  = {46},
  number  = {2},
  pages   = {255--308},
  year    = {2009}
}

@article{AdamaszekAdams2022,
  author  = {Adamaszek, Micha{\l} and Adams, Henry},
  title   = {On {V}ietoris--{R}ips complexes of hypercube graphs},
  journal = {Journal of Applied and Computational Topology},
  volume  = {6},
  pages   = {177--192},
  year    = {2022},
  doi     = {10.1007/s41468-021-00083-1}
}

@article{Shukla2023,
  author  = {Shukla, Samir},
  title   = {On {V}ietoris--{R}ips Complexes (with Scale 3) of Hypercube Graphs},
  journal = {SIAM Journal on Discrete Mathematics},
  volume  = {37},
  number  = {3},
  pages   = {1472--1495},
  year    = {2023},
  doi     = {10.1137/22M1481440}
}

@article{Feng2025,
  author  = {Feng, Ziqin},
  title   = {Homotopy types of {V}ietoris--{R}ips complexes of hypercube graphs},
  journal = {Journal of Topology and Analysis},
  year    = {2025},
  doi     = {10.1142/S1793525325500062}
}

@article{AdamsVirk2024,
  author  = {Adams, Henry and Virk, {\v Z}iga},
  title   = {Lower Bounds on the Homology of {V}ietoris--{R}ips Complexes of Hypercube Graphs},
  journal = {Bulletin of the Malaysian Mathematical Sciences Society},
  volume  = {47},
  year    = {2024},
  doi     = {10.1007/s40840-024-01663-x}
}

@article{frankl1987forbidden,
  title={Forbidden intersections},
  author={Frankl, Peter and R{\"o}dl, Vojt{\v{e}}ch},
  journal={Transactions of the American Mathematical Society},
  volume={300},
  number={1},
  pages={259--286},
  year={1987}
}

@article{ghrist2008barcodes,
  title={Barcodes: the persistent topology of data},
  author={Ghrist, Robert},
  journal={Bulletin of the American Mathematical Society},
  volume={45},
  number={1},
  pages={61--75},
  year={2008}
}

@book{Ghrist2014ElementaryAppliedTopology,
  author    = {Robert Ghrist},
  title     = {Elementary Applied Topology},
  edition   = {1.0},
  publisher = {CreateSpace},
  year      = {2014},
  isbn      = {9781502880857}
}

@article{ChazalMichel2021IntroductionTDA,
  author  = {Fr{\'e}d{\'e}ric Chazal and Bertrand Michel},
  title   = {An Introduction to Topological Data Analysis: Fundamental and Practical Aspects for Data Scientists},
  journal = {Frontiers in Artificial Intelligence},
  volume  = {4},
  pages   = {667963},
  year    = {2021},
  doi     = {10.3389/frai.2021.667963}
}

@article{AdamsBushVirk2025ConnectivitySpheres,
  author  = {Henry Adams and Johnathan Bush and {\v Z}iga Virk},
  title   = {The Connectivity of {V}ietoris--{R}ips Complexes of Spheres},
  journal = {Journal of Applied and Computational Topology},
  volume  = {9},
  pages   = {15},
  year    = {2025},
  doi     = {10.1007/s41468-025-00214-y}
}

@misc{hatami2023personal,
  author = {Hatami, Hamed and Hatami, Pooya and Harms, Nathan and Hosseini, Kaave and Wang, Pushen},
  title = {Personal communication},
  year = {2023},
  note = {Personal communication, April 2023}
}

@article{bhangale2015complexity,
  title={The complexity of computing the minimum rank of a sign pattern matrix},
  author={Bhangale, Amey and Kopparty, Swastik},
  journal={arXiv preprint arXiv:1503.04486},
  year={2015}
}

@inproceedings{mnev2006universality,
  title={The universality theorems on the classification problem of configuration varieties and convex polytopes varieties},
  author={Mn{\"e}v, Nikolai E},
  booktitle={Topology and geometry—Rohlin seminar},
  pages={527--543},
  year={2006},
  organization={Springer}
}

@inproceedings{basri2009visibility,
  title={Visibility constraints on features of 3D objects},
  author={Basri, Ronen and Felzenszwalb, Pedro F and Girshick, Ross B and Jacobs, David W and Klivans, Caroline J},
  booktitle={2009 IEEE Conference on Computer Vision and Pattern Recognition},
  pages={1231--1238},
  year={2009},
  organization={IEEE}
}

@misc{lovett2019notes,
  author = {Shachar Lovett},
  title = {Communication Complexity Notes},
  year = {2019},
  note = {Lecture notes, UCSD}
}

@article{bjorner2003nerves,
  title={Nerves, fibers and homotopy groups},
  author={Bj{\"o}rner, Anders},
  journal={Journal of Combinatorial Theory, Series A},
  volume={102},
  number={1},
  pages={88--93},
  year={2003},
  publisher={Elsevier}
}

\end{document}